\documentclass[a4paper, 10pt]{amsart}

\usepackage{amsmath, amssymb, amscd}
\usepackage{color}
\usepackage{tabularx}
\usepackage{comment}
\usepackage{hhline}
\usepackage[margin=2.27cm]{geometry}
\usepackage{multirow}
\usepackage{caption}
\usepackage{tabularray}
\usepackage{xcolor}
\usepackage{longtable,xtab,booktabs}
\usepackage[colorlinks]{hyperref}
\definecolor{linkblue}{RGB}{1,1,190}
\definecolor{citered}{RGB}{190,1,1}
\hypersetup{
	linkcolor=linkblue,
	urlcolor=linkblue,
	citecolor=citered
}

\def\doi#1{{\small\href{https://doi.org/#1}{\path{doi:#1}}}}
\def\arxiv#1{{\small\href{http://www.arxiv.org/abs/#1}{\path{arXiv:#1}}}}
\def\url#1{{\small\href{#1}{\path{#1}}}}

\theoremstyle{plain}
\newtheorem{theorem}{\bf Theorem}[section]

\newtheorem{lemma}[theorem]{\bf Lemma}

\theoremstyle{definition}

\newcommand{\bdot}{\boldsymbol{\cdot}}

     \DeclareMathOperator{\ord}{ord}
\DeclareMathOperator{\lcm}{lcm}

\DeclareMathOperator{\Aut}{Aut}     \DeclareMathOperator{\GL}{GL}
\DeclareMathOperator{\PSL}{PSL}     \DeclareMathOperator{\GAP}{\mathbf{GAP}}
\DeclareMathOperator{\SL}{SL}       \DeclareMathOperator{\Inn}{Inn}
\DeclareMathOperator{\M}{M}         \DeclareMathOperator{\PSU}{PSU}
\DeclareMathOperator{\id}{id}

\renewcommand{\time}{\negthinspace \times \negthinspace}

\renewcommand{\t}{\, | \,}
\newcommand{\und}{\quad \text{ and } \quad}

\numberwithin{equation}{section}

\begin{document}

\title[A classification of finite groups with small Davenport constant]{A classification of finite groups with \\ small Davenport constant}

\author{Jun Seok Oh}
\address{Department of Mathematics Education, Jeju National University, Jeju 63243, Republic of Korea}
\email{junseok.oh@jejunu.ac.kr}

\subjclass{11B30, 20E34, 20F05, 20M13, 20M14}
\keywords{product-one sequences, finite groups, Davenport constant}


\begin{abstract}
Let $G$ be a finite group.
By a sequence over $G$, we mean a finite unordered string of terms from $G$ with repetition allowed, and we say that it is a product-one sequence if its terms can be ordered so that their product is the identity element of $G$.
Then, the Davenport constant $\mathsf D (G)$ is the maximal length of a minimal product-one sequence, that is a product-one sequence which cannot be factored into two non-trivial product-one subsequences.
The Davenport constant is a combinatorial group invariant that has been studied fruitfully over several decades in additive combinatorics, invariant theory, and factorization theory, etc.
Apart from a few cases of finite groups, the precise value of the Davenport constant is unknown.
Even in the abelian case, little is known beyond groups of rank at most two.
On the other hand, for a fixed positive integer $r$, structural results characterizing which groups $G$ satisfy $\mathsf D (G) = r$ are rare.
We only know that there are finitely many such groups.
In this paper, we study the classification of finite groups based on the Davenport constant.
\end{abstract}

\maketitle


\bigskip
\section{Introduction} \label{sec:1}
\bigskip

Let $G$ be a finite group.
A sequence over $G$ means a finite unordered string of terms from $G$ with repetition allowed, and the number of terms is called the length of a sequence.
We say that a sequence is product-one if its terms can be ordered so that their product in $G$ is the identity element of $G$, and say that a product-one sequence is minimal if it cannot be factored into two non-trivial product-one sequences.
A sequence is called product-one free if no subproduct of terms equals the identity element of $G$.
The large (resp., small) Davenport constant $\mathsf D (G)$ (resp., $\mathsf d (G)$) is the maximal length of a minimal product-one (resp., product-one free) sequence over $G$.
The Davenport constant of a finite group is a classical combinatorial group invariant that has been fruitfully studied over several decades (see \cite{Ge-HK06} for the monograph).
Because the investigation of the Davenport constant has its origin in the arithmetical problem of algebraic number fields (as posted in \cite{Ol69}), earlier work often focused on the abelian setting.
Indeed, the Davenport constant $\mathsf D (G)$ of the class group $G$ of a number field $K$ is the maximal number of prime ideals occurring in the prime ideal decomposition of an irreducible element in the ring of algebraic integers of $K$ (see \cite[Chapter 9]{Na04} for more details).
With various connections to combinatorics, invariant theory, and factorization theory, the tools have now developed to the point where non-abelian groups have begun to be explored.
For recent progress, we refer to \cite{Gr13b,Ge-Gr13,Ga-Li-Pe14,Br-Ri18,Ha-Zh19,Oh-Zh20a,Oh-Zh20b,Ga-Li-Qu21,Zh21,Qu-Li-Tee22,Zhao-Zh22,Qu-Li22,Qu-Li23,Fa-Zh23,Av-Br-Ri23,An-Cz-Do-Sz25}.

Suppose that $G$ is abelian, say $G \cong C_{n_1} \times \cdots \times C_{n_r}$ with $n_1, \ldots, n_r \in \mathbb N$ and $1 < n_1 \mid \cdots \mid n_r$.
Then, a simple argument shows that
\[
  \small{\sum}_{i=1}^{r} (n_i - 1) + 1 \, \le \, \mathsf d (G) + 1 \, = \, \mathsf D (G) \,,
\]
and equality holds for $r \le 2$ and for some classes of groups with $r \ge 3$ (see \cite{Ge-Sc92, Ga00} and \cite[Corollary 4.2.13]{Ge09a}).
Moreover, in \cite{Ge-Sc92}, the authors showed that there are infinitely many groups $G$ of rank $r \ge 4$ for which this inequality is strict.
Hence, determining the precise value of the Davenport constant is classical and quite difficult problem even in the abelian setting.
However, if we know the specific value of $\mathsf D (G)$, then since both the rank and the exponent of $G$ are bounded above by $\mathsf d (G)$, it is possible to list up all possible candidates of $G$ with the fixed $\mathsf D (G)$.

Suppose that $G$ is non-abelian.
Then, the structure of the monoid $\mathcal B (G)$ of all product-one sequences is well-studied.
It is finitely generated C-monoid, whence it is a Mori monoid with non-trivial conductor to its complete integral closure and the class group of the complete integral closure is finite.
C-monoids have been introduced to study the arithmetic of higher-dimensional non-completely integrally closed Mori domains, and the monoid $\mathcal B (G)$ was the first class of C-monoids for which we have an insight into their algebraic structure.
Indeed, the monoid $\mathcal B (G)$ is seminormal if and only if the commutator subgroup of $G$ has at most two elements if and only if the class semigroup of $\mathcal B (G)$ is Clifford, i.e., it is a union of groups (see \cite[Corollary 3.12]{Oh19}).
We refer the reader to \cite{Re13,Ge-Ra-Re15,Ge-Zh19,Oh20,Oh22} for more recent work on the algebraic structure of C-monoids.
The Davenport constant $\mathsf D (G)$ is the maximal length of an irreducible element of the monoid $\mathcal B (G)$.
Thus it is the crucial (combinatorial) invariant for studying the arithmetic of the (C-)monoid $\mathcal B (G)$.
However, there are only two results so far about the exact value of the Davenport constant by Geroldinger and Grynkiewicz \cite{Ge-Gr13}, which states that $\mathsf D (G) = \mathsf d (G) + |G'| = \frac{1}{2}|G| + |G'|$ for finite non-abelian groups $G$ with a cyclic index 2 subgroup, where $G'$ denotes the commutator subgroup of $G$, and by Grynkiewicz \cite[Theorem 5.1]{Gr13b}, which states that $\mathsf D (G) = 2q$ for non-abelian groups $G$ of order $pq$ with $p$ and $q$ distinct primes such that $p \mid q-1$.
Moreover, in \cite{Gr13b}, Grynkiewicz studied the upper bound for $\mathsf D (G)$, and in \cite{Qu-Li-Tee22, Qu-Li-Tee25}, Qu, Li, and Teeuwsen studied the upper bound for $\mathsf d (G)$ for general non-abelian groups $G$.
Unlike abelian groups, we are not aware of a finite non-abelian group with $\mathsf d (G) + 1 = \mathsf D (G)$, and so it is worthwhile to mention whether $\mathsf d (G) + 1 < \mathsf D (G)$ for all finite non-abelian group $G$.
Moreover, there are no fundamental methods classifying finite groups with the specific (large) Davenport constant, and in contrast with the abelian setting, we do not even know how many candidates exist.

This combinatorial aspect of the Davenport constant has intimate connections to invariant theory and factorization theory.
To begin with invariant theory, we let $\boldsymbol{\beta} (G)$ be the Noether number of $G$.
Then, Schmid \cite{Sc91} showed that $\mathsf d (G) + 1 = \boldsymbol{\beta} (G)$ for all abelian groups $G$, and in \cite{Cz-Do14a}, Cziszter and Domokos showed that $\mathsf d (G) + 1 \le \boldsymbol{\beta} (G)$ for groups $G$ with a cyclic subgroup of index 2.
It is still an open question whether the inequality holds true for all finite groups.

To connect with factorization theory, we let $\mathcal L (G) = \{ \mathsf L (S) \mid S$ is a product-one sequence over $G \}$ be the collection of sets of lengths $\mathsf L (S)$, where $\mathsf L (S)$ is the set of all lengths $\ell \in \mathbb N$ with $S = U_1 \bdot \ldots \bdot U_{\ell}$ for some minimal product-one sequences $U_1, \ldots, U_{\ell}$.
Then, the standing conjecture is that, for finite groups $G_1$ and $G_2$, $\mathcal L (G_1) = \mathcal L (G_2)$ implies that $G_1$ and $G_2$ are isomorphic.
With a strong connection to algebraic number theory, the initial study focused only on abelian groups (see \cite{Ge-Zh20} for an overview).
However, with the development of methods for non-abelian groups, this problem has attracted wide attention in the literature.
In this direction, a first step was done for specific finite groups (see \cite[Theorem 3.14]{Oh19} for groups with the Davenport constant 6, and \cite[Corollary 6.13]{Ge-Gr-Oh-Zh22} for finite dihedral groups).
Very recently, we obtained an affirmative answer that the standing conjecture may also hold for non-abelian groups (see \cite{Ge-Oh25}).
If $\mathcal L (G_1) = \mathcal L (G_2)$, then $\mathsf D (G_1) = \mathsf D (G_2)$ (see \cite[Proposition 5.6]{Oh20}).
Therefore, a classification of finite groups having the specific Davenport constant plays a key role in the literature.

In the present paper, we address finite groups with small values of their Davenport constant, and the main result is to provide the following classification.

\smallskip
\begin{theorem} \label{thm:classify}~
Let $G$ be a finite group with $|G| \ge 2$.
\begin{enumerate}
\item If $\mathsf D (G) \le 7$, then $G$ is isomorphic to one of the groups listed in {\sc{Table}} $1$.

\smallskip
\item If $8 \le \mathsf D (G) \le 9$, then $G$ is either a non-abelian group having a proper subgroup of order $32$, or isomorphic to one of the groups listed in {\sc{Table}} $2$.
\end{enumerate}

\begin{table}[ht]  \hspace{-20pt}
\begin{minipage}{.59\textwidth}
\begin{tabular}{|c|c|c|} \hline
\rule{0pt}{13pt} $\mathsf D (G)$ & $G$ & $GAP$ \\[.5em] \hhline{|=|=|=|}
\rule{0pt}{14pt} $2$ & $C_2$ & $(2,1)$ \\[.5em] \hline
\rule{0pt}{13pt} \multirow{2}{*}{$3$} & $C_3$ & $(3,1)$ \\ & $C^{2}_2$ & $(4,2)$ \\[.5em] \hline
\rule{0pt}{13pt} \multirow{2}{*}{$4$} & $C_4$ & $(4,1)$ \\ & $C^{3}_2$ & $(8,5)$ \\[.5em] \hline
\rule{0pt}{13pt} \multirow{4}{*}{$5$} & $C_5$ & $(5,1)$ \\ & $C_2 \times C_4$ & $(8,2)$ \\ & $C^{2}_3$ & $(9,2)$ \\ & $C^{4}_2$ & $(16,14)$ \\[.5em] \hline
\rule{0pt}{13pt} \multirow{6}{*}{$6$} & $C_6$ & $(6,2)$ \\ & $C^{2}_2 \times C_4$ & $(16,10)$ \\ & $C^{5}_2$ & $(32,51)$ \\[.4em] \cline{2-3} \rule{0pt}{15pt} & $D_6$ & $(6,1)$ \\ & $D_8$ & $(8,3)$ \\ & $Q_8$ & $(8,4)$ \\[.5em] \hline
\rule{0pt}{14pt} \multirow{11}{*}{$7$} & $C_7$ & $(7,1)$ \\ & $C_2 \times C_6$ & $(12,5)$ \\ & $C^{2}_4$ & $(16,2)$ \\ & $C^{3}_3$ & $(27,5)$ \\ & $C^{3}_2 \times C_4$ & $(32,45)$ \\ & $C^{6}_2$ & $(64,267)$ \\[.5em] \cline{2-3} \rule{0pt}{15pt} & $A_4$ & $(12,3)$ \\ & $C^{2}_2 \rtimes C_4$ & $(16,3)$ \\ & $C_2 \times D_8$ & $(16,11)$ \\ & $C_2 \times Q_8$ & $(16,12)$ \\ & $(C_2 \times C_4) \rtimes C_2$ & $(16,13)$ \\[.5em] \hline
\end{tabular}
\caption{$\mathsf D (G) \le 7$ \quad \hspace{100pt} }
\end{minipage} \hspace{-75pt}
\begin{minipage}{.4\textwidth}
\begin{tabular}{|c|c|c|} \hline
\rule{0pt}{13pt} $\mathsf D (G)$ & $G$ & $GAP$ \\[.5em] \hhline{|=|=|=|}
\rule{0pt}{13pt} \multirow{14}{*}{$8$} & $C_8$ & $(8,1)$ \\ & $C_3 \times C_6$ & $(18,5)$ \\ & $C^{2}_2 \times C_6$ & $(24,15)$ \\ & $C_2 \times C^{2}_4$ & $(32,21)$ \\ & $C^{4}_2 \times C_4$ & $(64,260)$ \\ & $C^{7}_2$ & $(128,2328)$ \\[.3em] \cline{2-3} \rule{0pt}{13pt} & $C_4 \rtimes C_4$ & $(16,4)$ \\ & $H_{27}$ & $(27,3)$ \\ & $C_2 \times (C^{2}_2 \rtimes C_4)$ & $(32,22)$ \\ & $C^{2}_2 \times D_8$ & $(32,46)$ \\ & $C^{2}_2 \times Q_8$ & $(32,47)$ \\ & $C_2 \times \big( (C_4 \times C_2) \rtimes C_2\big)$ & $(32,48)$ \\ & $C^{3}_2 \rtimes C^{2}_2$ & $(32,49)$ \\ & $(C_2 \times Q_8) \rtimes C_2$ & $(32,50)$ \\[.5em] \hline
\rule{0pt}{13pt} \multirow{16}{*}{$9$} & $C_9$ & $(9,1)$ \\ & $C_2 \times C_8$ & $(16,5)$ \\ & $C^{2}_5$ & $(25,2)$ \\ & $C^{3}_2 \times C_6$ & $(48,52)$ \\ & $C^{2}_2 \times C^{2}_4$ & $(64,192)$ \\ & $C^{4}_3$ & $(81,15)$ \\ & $C^{5}_2 \times C_4$ & $(128,2319)$ \\ & $C^{8}_2$ & $(256,56092)$ \\[.3em] \cline{2-3} \rule{0pt}{13pt} & $Q_{12}$ & $(12,1)$ \\ & $D_{12}$ & $(12,4)$ \\ & $(C_2 \times C_4) \rtimes C_4$ & $(32,2)$ \\ & $C_2 \times (C_4 \rtimes C_4)$ & $(32,23)$ \\ & $C^{2}_4 \rtimes C_2$ & $(32,24)$ \\ & $C_4 \times D_8$ & $(32,25)$ \\ & $C_4 \times Q_8$ & $(32,26)$ \\ & $C^{4}_2 \rtimes C_2$ & $(32,27)$ \\[.5em] \hline
\end{tabular}
\caption{$8 \le \mathsf D (G) \le 9$ \quad \hspace{-30pt} }
\end{minipage}
\end{table}
\end{theorem}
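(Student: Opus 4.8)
The plan is to reduce the classification to a finite verification: first bound $|G|$ in terms of $\mathsf D(G)$, and then identify each surviving group by pinning down its Davenport constant. Two lower bounds drive the reduction. For abelian $G \cong C_{n_1} \times \cdots \times C_{n_r}$ with $1 < n_1 \mid \cdots \mid n_r$, the inequality $\sum_i (n_i-1) + 1 \le \mathsf D(G)$ recorded above forces $\sum_i (n_i - 1) \le 8$, which bounds $r$ and every $n_i$; only finitely many abelian groups remain, none of order exceeding $2^8 = 256$. For the general case the engine is the bound $\mathsf d(G) \ge \mathsf d(N) + \mathsf d(G/N)$ for every $N \trianglelefteq G$, obtained by concatenating a product-one free sequence over $N$ with a lift of one over $G/N$ and projecting modulo $N$ to see that no nonempty subsequence admits a product-one ordering. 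Iterating along a composition series of a solvable group of order $\prod_j p_j^{a_j}$ gives $\mathsf d(G) \ge \sum_j a_j(p_j - 1)$, and since $\mathsf d(G) \le \mathsf D(G) - 1 \le 8$ this bounds every exponent: no prime $\ge 11$ divides $|G|$, while $v_3(|G|) \le 4$, $v_5(|G|) \le 2$, $v_7(|G|) \le 1$, and for a $2$-group $|G| \le 2^{\mathsf D(G)-1}$. Non-solvable groups are ruled out at once, since they contain subgroups of Davenport constant at least $10$ (for example $D_{10} \le A_5$, with $\mathsf D(D_{10}) = 10$, or a cyclic subgroup generated by an element of order $\ge 10$).

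I would then split the survivors. The abelian ones are settled by formulas: Olson's $\mathsf D(G) = 1 + \sum_i (n_i - 1)$ is exact for $p$-groups, $\mathsf D(C_m \times C_n) = m + n - 1$ handles rank two, and the finitely many higher-rank non-$p$-groups are checked directly; this produces every abelian entry of both tables, including large groups such as $C_2^8$. The non-abelian survivors I would sort by whether they contain a proper subgroup of order $32$. If $32 \mid |G|$ and $|G| > 32$, then a Sylow $2$-subgroup already supplies a proper subgroup of order $32$; hence a non-abelian group without such a subgroup has $v_2(|G|) \le 4$, and together with the odd-prime bounds this leaves only finitely many orders. These I would run through the small-group library, using the exact formulas as shortcuts: $\mathsf D(G) = \tfrac12 |G| + |G'|$ for a cyclic index-$2$ subgroup (which pins down the dihedral groups $D_n$, the dicyclic groups $Q_n$, and their relatives) and $\mathsf D(G) = 2q$ in the order-$pq$ case. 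The ones with $\mathsf D(G) \in \{8,9\}$ constitute the non-abelian part of Table~2, while the non-abelian groups that do contain a proper subgroup of order $32$ (necessarily of order $\ge 64$) are exactly the family the statement leaves unresolved.

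The hard part will be the non-abelian $2$-groups of orders $16$ and $32$, where none of the closed formulas apply. For each I need a matching pair of bounds. A lower bound $\mathsf D(G) \ge r$ I would supply by exhibiting an explicit minimal product-one sequence of length $r$. For the upper bound the main lever, where it is available, is the sandwich $\mathsf d(G) + 1 \le \mathsf D(G) \le \mathsf d(G) + |G'|$ (an equality already in the index-$2$ cyclic case): it reduces the task to computing $\mathsf d(G)$ and reading off $|G'|$, and when $|G'|$ is small this determines $\mathsf D(G)$ outright. The genuinely delicate groups are those where this leaves a gap, which must be closed by a direct combinatorial analysis of minimal product-one sequences over the specific group, or—as the $GAP$ identifiers in the tables indicate—by a finite computer search. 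Organizing these order-$16$ and order-$32$ computations so that every isomorphism type is covered, with neither overlap nor omission, is where the real labor of the proof lies.
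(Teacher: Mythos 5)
Your skeleton (bound $|G|$ via lower bounds for $\mathsf D$, then verify the finitely many survivors) matches the paper's in outline, and the order bounds you derive from $\mathsf d (G) \ge \mathsf d (N) + \mathsf d (G/N)$ iterated along a composition series are essentially the ones the paper uses. But two steps that carry the real weight of the theorem are missing. First, the completeness of Table~1: you park every non-abelian group containing a proper subgroup of order $32$ in the ``unresolved family'' of part~(2), but part~(1) asserts a \emph{complete} list of all groups with $\mathsf D (G) \le 7$, so you must still prove that every non-abelian group with a proper subgroup of order $32$ satisfies $\mathsf D (G) \ge 8$. This is a genuine theorem in the paper (its lemma on non-abelian $2$-groups of order $\ge 64$ together with part~1 of its key lemma), requiring a case split on the isomorphism type of the order-$32$ subgroup and, in the $C_2^{5}$-Sylow case, an explicitly constructed minimal product-one sequence of length $\ge 8$. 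Nothing in your outline supplies it, and without it Table~1 could in principle be missing a non-abelian group of order $64$ or larger.

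Second, the ``finite verification'' is not actually available by library lookup. After your bounds, the non-abelian orders still to be handled run up to $2^{4} \cdot 3^{4} \cdot 5^{2} \cdot 7$ (and for part~(1), with $\mathsf d (G) \le 6$, up to $2^{6} \cdot 3^{3} \cdot 5 \cdot 7$), while the computed tables of Davenport constants stop at order $42$ and your two exact formulas (cyclic index-$2$ subgroup; order $pq$) cover almost none of the remaining groups. This gap between $43$ and $\sim 2 \times 10^{5}$ is precisely where the paper's main lemma lives: a long induction on the order that uses the classification of perfect groups of order less than $10^{4}$ to treat the perfect case, and otherwise runs through the pairs $\big( |G'|, |G/G'| \big)$ with Sylow/Hall theory, explicit semidirect-product presentations obtained from rational canonical forms of automorphisms, and hand-built minimal product-one sequences (e.g.\ of length $10$ over $C_2^{4} \rtimes C_5$ of order $80$ and over $C_3 \times H_{27}$ of order $81$). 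Relatedly, ``non-solvable groups are ruled out at once'' is an assertion, not an argument: one must locate, inside each perfect group of admissible order ($A_5$, $\SL (2,5)$, $A_6$, $\PSL (2,7)$, $\PSL(2,8)$, \dots, and their extensions), a concrete subgroup with Davenport constant at least $10$, which the paper does case by case from Sandl\"obes's list. Finally, the part you flag as hardest --- the non-abelian $2$-groups of orders $16$ and $32$ --- is exactly the part the paper imports from the existing computations; the labor is elsewhere.
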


\smallskip
In Section~\ref{sec:2}, we gather the required background, and the proof of Theorem~\ref{thm:classify} will be given in Section~\ref{sec:3} with a series of lemma.
In the proof, we substantially use the results of the computation of the Davenport constant of non-abelian groups of small order (\cite{Cz-Do-Sz18,An-Cz-Do-Sz25}) and the classification of finite perfect groups (\cite{Sa81}).
In \cite{An-Cz-Do-Sz25}, the authors computed the Davenport constants for all non-abelian groups of order at most 42, and this allows us to list up non-abelian groups of order at most 42 having the Davenport constant less than 10.
Since $\mathsf D (H) \le \mathsf D (G)$ for any subgroup $H$ of $G$, the main approach is to find the certain subgroup having the Davenport constant at least 10.
Depending on the group order, we find the desired subgroups by making use of well-known theorems in group theory, such as Feit-Thompson's Theorem, Sylow's Theorem, and Hall's Theorem.
Specifically, if $G$ is perfect, then based on the classification, we can construct a specific subgroup $H$ of order at most 42 with a sufficiently large Davenport constant.
If $G$ is not perfect, then since $G$ has at least one proper normal subgroup, we make full use of the classical method for classifying finite groups in terms of the semidirect product.
Once this is done, we obtain the specific group generators and their relations, and then we can achieve to find either the desired subgroups or a minimal product-one sequence of sufficiently large length.
However, the difficulty of classification arises in the case of non-abelian 2-groups.
There are many non-abelian 2-groups where we need to clarify the generators and relations (for example, there are 256 non-abelian groups of order 64, and 2313 of order 128, etc), and many of these non-abelian 2-groups have subgroups with relatively small values of their Davenport constant.
We note that it is possible to search for non-abelian 2-groups whose commutator subgroups have only two elements in the database of finite groups available on the website \cite{LMFDB}.
Hence, according to the characterization of the monoid $\mathcal B (G)$ being seminormal (\cite{Oh19}), we figure out that there are many non-abelian 2-groups for which their monoids of product-one sequences are seminormal.
It is also worth noting that the study of the seminormal monoid $\mathcal B (G)$ is an ongoing subject of interest in the literature.
On the other hand, it is easy to classify abelian 2-groups using the structure theorem, and then their Davenport constant can be fully determined from their structure (see \cite{Ge-Sc92}).
Thus, we first present a lower bound for the Davenport constant of non-abelian 2-groups of order at least 32 (Lemma~\ref{lem:2group}).
Then, we prove the key result of this paper, which states that every finite non-abelian group of order exceeding 42 has the Davenport constant at least 8 if it has a proper subgroup of order 32, and the Davenport constant at least 10 if it has no proper subgroup of order 32 (Lemma~\ref{lem:DC}).
This allows us to obtain the complete list of all finite groups whose Davenport constant is at most 7.


\bigskip
\section{Preliminaries} \label{sec:2}
\bigskip

Let $\mathbb N$ denote the set of positive integers and $\mathbb N_0 = \mathbb N \cup \{ 0 \}$ denote the non-negative integers.
For integers $a, b \in \mathbb Z$, $[a,b] = \{ x \in \mathbb Z \mid a \le x \le b \}$ is the discrete interval.

Let $G$ be a multiplicatively written finite group with identity element $1_G$.
For an element $g \in G$ and a subset $G_0 \subseteq G$, we denote by $\ord (g) \in \mathbb N$ the order of $g$, and by $\langle G_0 \rangle$ the subgroup of $G$ generated by $G_0$.
Moreover, $G'$ denotes the commutator subgroup of $G$, and $Z (G)$ denotes the center of $G$.
For a (suitable) positive integer $n$ and for an odd prime $p$, we denote by

\begin{itemize}
\item $C_n$ a {\it cyclic group} of order $n$,

\smallskip
\item $Q_{4n}$ a {\it dicyclic group} of order $4n$,

\smallskip
\item $D_{2n}$ a {\it dihedral group} of order $2n$,

\smallskip
\item $SD_{2^{n}}$ a {\it semidihedral group} of order $2^{n}$,

\smallskip
\item $M_{2^{n}}$ a {\it modular maximal-cyclic group} of order $2^{n}$,

\smallskip
\item $H_{p^{3}}$ a {\it a Heisenberg group} of order $p^{3}$ with exponent $p$,

\smallskip
\item $S_n$ a {\it symmetric group} of degree $n$,

\smallskip
\item $A_n$ an {\it alternating group} of degree $n$,

\smallskip
\item $\GL (n,q)$ a {\it general linear group} of degree $n$ over a finite field of order $q$,

\smallskip
\item $\SL (n,q)$ a {\it special linear group} of degree $n$ over a finite field of order $q$, and

\smallskip
\item $\PSL (n,q)$ a {\it projective special linear group} of degree $n$ over a finite field of order $q$.
\end{itemize}

\noindent
In the tables of this paper, the column `$GAP$' presents the identifier of a group $G$, where $(m, n)$ indicates the function $\texttt{SmallGroup(m,n)}$, representing the $n^{\textnormal{th}}$ group of order $m$ in the Small Groups Library of $\GAP$ (see \cite{GAP}).
The column `$G$' presents the structure description of a group $G$, as returned by the function $\texttt{StructureDescription(G)}$ in $\GAP$.
In the proof, we occasionally use basic functions, such as $\texttt{AutomorphismGroup(SmallGroup(m,n))}$ and $\texttt{Size(AutomorphismGroup(SmallGroup(m,n)))}$, in $\GAP$ to compute the order of particular groups.

A {\it sequence} over $G$ is a finite unordered string of terms from $G$, where the repetition of terms is allowed, and we consider sequences as elements of the free abelian monoid $\mathcal F (G)$ with basis $G$, i.e., it consists of all sequences over $G$ with concatenation of sequences as the operation $\bdot$.
Thus, a sequence $S \in \mathcal F (G)$ has the form
\begin{equation} \label{eq:sequence}~
  S \, = \, g_1 \bdot \ldots \bdot g_{\ell} \, = \, \small{\prod}^{\bullet}_{g \in G} g^{[\mathsf v_g (S)]} \,,
\end{equation}
where $g_1, \ldots, g_{\ell} \in G$ and $\mathsf v_g (S) = | \{ i \in [1,\ell] \mid g_i = g \} |$.
The {\it length} of $S$ is denoted by $|S| = \ell = \sum_{g \in G} \mathsf v_g (S)$.
To avoid confusion, we usually use brackets to denote exponentiation in $\mathcal F (G)$.
Thus, for $g \in G$, $S \in \mathcal F (G)$, and $n \in \mathbb N_0$, we have that
\[
  g^{n} \in G \,, \qquad g^{[n]} = \underset{n}{\underbrace{g \bdot \ldots \bdot g}} \in \mathcal F (G) \,, \und S^{[n]} = \underset{n}{\underbrace{S \bdot \ldots \bdot S}} \in \mathcal F (G) \,.
\]
If $T, S \in \mathcal F (G)$, then we say that $T$ {\it divides} $S$ in $\mathcal F (G)$, denoted by $T \mid S$, if $S = T \bdot U$ for some $U \in \mathcal F (G)$.
In this case, $T$ is called a {\it subsequence} of $S$.
Moreover, if $T^{[n]}$ divides $S$ in $\mathcal F (G)$, then $S \bdot T^{[-n]} = S \bdot \big( T^{[n]} \big)^{[-1]} \in \mathcal F (G)$ denotes the subsequence of $S$ obtained by removing the terms of $T^{[n]}$ from $S$.

Let $S \in \mathcal F (G)$ be a sequence as in (\ref{eq:sequence}).
Then,
\[
  \pi (S) = \{ g_{\sigma (1)} \cdots g_{\sigma (\ell)} \in G \mid \sigma \mbox{ is a permutation of } [1,\ell] \}
\]
is the {\it set of products} of $S$, and it is easy to see that $\pi (S)$ is contained in a $G'$-coset.
Note that $|S| = 0$ if and only if $S$ is a trivial sequence, and in that case, we use the convention that $\pi (S) = \{ 1_G \}$.
The sequence $S$ is called

\begin{itemize}
\item {\it product-one} if $1_G \in \pi (S)$,

\smallskip
\item {\it product-one free} if $1_G \notin \pi (T)$ for every non-trivial subsequence $T$ of $S$,
\end{itemize}
and any ordered product, that equals $1_G$, in $\pi (S)$ is called a {\it product-one equation} of $S$.
A {\it minimal product-one sequence} is a product-one sequence which cannot be factored into two non-trivial product-one subsequences.
Then, the set
\[
  \mathcal B (G) = \{ S \in \mathcal F (G) \mid 1_G \in \pi (S) \} \, \subseteq \, \mathcal F (G)
\]
is a submonoid of $\mathcal F (G)$, called the {\it monoid of product-one sequences} over $G$.
It is easy to see that every product-one sequence in $\mathcal B (G)$ has a factorization into minimal product-one sequences and that there are only finitely many distinct factorizations.
Moreover, we will use the following simple fact without further mention: if $g_1 \bdot \ldots \bdot g_{\ell} \in \mathcal B (G)$ with $g_1 \cdots g_{\ell} = 1_G$, then $g_{\ell} g_1 \cdots g_{\ell-1} = g_{\ell} (g_1 \cdots g_{\ell})g^{-1}_{\ell} = 1_G$, and by iterating the same argument, we obtain that
\[
  g_i \cdots g_{\ell} g_1 \cdots g_{i-1} = 1_G \,\, \mbox{ for all } \, i \in [1,\ell] \,.
\]
We denote by
\begin{itemize}
\item $\mathsf D (G) = \sup \{ |S| \mid S$ is a minimal product-one sequence$\}$ the {\it large Davenport constant} of $G$,

\smallskip
\item $\mathsf d (G) = \sup \{ |S| \mid S$ is a product-one free sequence$\}$ the {\it small Davenport constant} of $G$.
\end{itemize}

\noindent
It is easy to verify that
\[
  \mathsf d (G) + 1 \, \overset{(1)}{\le} \, \mathsf D (G) \, \overset{(2)}{\le} \, |G|
\]
(see \cite[Lemma 2.4]{Ge-Gr13}), and it is well known that (2) satisfies equality if and only if $G$ is isomorphic to a cyclic group, or to a dihedral group of order $2n$ with $n$ odd (see \cite[Corollary 3.12]{Cz-Do-Ge16}).
However, so far, we only know that (1) satisfies equality only if $G$ is abelian.
Thus, we are not aware of any example of a finite non-abelian group $G$ with $\mathsf d (G) + 1 = \mathsf D (G)$.
Moreover, we observe that every non-abelian group listed in the tables of Theorem~\ref{thm:classify} satisfies $\mathsf d (G) + 1 < \mathsf D (G)$ (see Lemma~\ref{lem:dD}).
We refer the reader to \cite{Gr13b} for a general upper bound for the Davenport constant.
We present here the results related to the Davenport constant, which we refer to frequently throughout the manuscript.

\smallskip
\begin{lemma}$($\cite[Theorem 1 and Corollary 2]{Ge-Sc92}$)$ \label{lem:abD}~
Let $G \cong C_{n_1} \times \cdots \times C_{n_r}$, with $n_1, \ldots, n_r \in \mathbb N$ and $1 < n_1 \mid \cdots \mid n_r$, be a finite abelian group.
Then, $\mathsf D (G) \ge \sum_{i=1}^{r} (n_i - 1) + 1$, and equality holds in each of the following cases:
\begin{enumerate}
\item[(a)] $G$ is a $p$-group.

\smallskip
\item[(b)] $G$ has rank $r \le 2$.

\smallskip
\item[(c)] $G \cong C^{k}_2 \times C_{2n}$ with $k \le 3$ and odd $n$.
\end{enumerate}
\end{lemma}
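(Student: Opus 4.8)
The plan is to convert the two-sided statement into a single bound on $\mathsf d(G)$. Since $G$ is abelian, $G' = \{1_G\}$, so for every $S \in \mathcal F(G)$ the set $\pi(S)$ is a single element, which I also denote $\pi(S)$. First I would record the abelian identity $\mathsf D(G) = \mathsf d(G) + 1$: the inequality $\mathsf d(G)+1 \le \mathsf D(G)$ is (1) in the excerpt, and for the reverse, if $S = g_1 \bdot \ldots \bdot g_\ell$ is a minimal product-one sequence of length $\mathsf D(G)$, then $S' = S \bdot g_\ell^{[-1]}$ is product-one free, for any nontrivial product-one subsequence $T \t S'$ would give $\pi(S \bdot T^{[-1]}) = \pi(S)\pi(T)^{-1} = 1_G$ with $S\bdot T^{[-1]}$ nontrivial (it contains $g_\ell$), splitting $S$ into two nontrivial product-one sequences. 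Hence $\mathsf D(G) - 1 \le \mathsf d(G)$. Writing $d^*(G) := \sum_{i=1}^r (n_i-1)$, it therefore suffices to prove $\mathsf d(G) \ge d^*(G)$ in general and $\mathsf d(G) \le d^*(G)$ in cases (a)--(c). The lower bound is witnessed by $S_0 = \prod_{i=1}^r e_i^{[n_i-1]}$, where $G = \langle e_1 \rangle \times \cdots \times \langle e_r \rangle$ with $\ord(e_i) = n_i$: a nontrivial subsequence has product $\prod_i e_i^{a_i}$ with $0 \le a_i \le n_i-1$ not all zero, which is $\ne 1_G$ by independence of the factors, so $S_0$ is product-one free of length $d^*(G)$.

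For (a) I would use Olson's group-algebra method. Let $p$ be the prime, $G \cong C_{p^{a_1}} \times \cdots \times C_{p^{a_r}}$, and work in $R = \F_p[G]$ with augmentation ideal $I = \langle g - 1_G : g \in G \rangle$. Setting $x_i = e_i - 1_G$ and using $e^{p^a}-1 = (e-1)^{p^a}$ in characteristic $p$, one has $R \cong \F_p[x_1,\ldots,x_r]/(x_1^{p^{a_1}},\ldots,x_r^{p^{a_r}})$ with $I = (x_1,\ldots,x_r)$, so the top nonzero monomial has degree $\sum_i (p^{a_i}-1) = d^*(G)$ and thus $I^{d^*(G)+1} = 0$. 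Given a product-one free $S = g_1 \bdot \ldots \bdot g_\ell$, I expand
\[
  \prod_{i=1}^{\ell} (g_i - 1_G) \, = \, \sum_{J \subseteq [1,\ell]} (-1)^{\ell - |J|} \prod_{i \in J} g_i \,.
\]
The coefficient of the basis element $1_G$ counts subsets $J$ with $\prod_{i \in J} g_i = 1_G$; as $S$ is product-one free, only $J = \emptyset$ qualifies, giving coefficient $(-1)^\ell \ne 0$. Hence $0 \ne \prod_i (g_i - 1_G) \in I^\ell$, forcing $\ell \le d^*(G)$.

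For (b) with $r = 1$, $G = C_n$ and $\mathsf d(C_n) = n-1$ follows from the pigeonhole argument on partial products. For $r = 2$, $G = C_m \times C_n$ with $m \t n$, the bound $\mathsf d(G) \le m + n - 2$ is Olson's rank-two theorem, which I would approach by the polynomial method, reducing the existence of a short product-one subsequence to the solvability of a Chevalley--Warning system over the prime fields dividing $|G|$. I expect this to be the main obstacle: because $d^*$ is \emph{not} additive along a coprime (CRT) decomposition --- already $\mathsf d(C_6) = 5 \ne (2-1)+(3-1)$ shows the two-element-component Sylow bounds do not add up to $d^*$ --- one cannot reduce the composite-order rank-two case to the prime-power case handled in (a), and a genuinely combinatorial argument is unavoidable.

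For (c), note $C_2^k \times C_{2n} \cong C_2^{k+1} \times C_n$ with $n$ odd, so the invariant factors are $k$ copies of $C_2$ together with $C_{2n}$, giving $d^*(G) = k + 2n - 1$. I would split a product-one free sequence along the canonical projection onto the $2$-part $P \cong C_2^{k+1}$ and the odd part $C_n$, bounding the $P$-contribution by (a) via $\mathsf d(C_2^{k+1}) = k+1$ and controlling the odd part by an inductive grouping of terms into product-one blocks over $C_n$. The hypothesis $k \le 3$ should enter precisely here: the estimate from (a) is sharp enough to keep the combined bound equal to $d^*(G)$ only in this range, consistent with the known failure of equality for rank $\ge 4$ mentioned in the introduction.
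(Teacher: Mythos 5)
The paper gives no proof of this lemma at all: it is quoted from the literature (Olson's 1969 papers for the lower bound and cases (a)--(b), Geroldinger--Schneider for (c)), so there is no ``paper's route'' to compare against. Within your write-up, the reduction $\mathsf D (G) = \mathsf d (G) + 1$ for abelian $G$, the product-one free witness $e_1^{[n_1-1]} \bdot \ldots \bdot e_r^{[n_r-1]}$, and the group-algebra proof of (a) (nilpotency $I^{d^*(G)+1}=0$ plus the coefficient of $1_G$ being $(-1)^{\ell}$) are complete and correct --- this is exactly Olson's argument --- and the rank-one half of (b) is fine.

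The genuine gaps are in the rank-two case of (b) and in (c), and in both places the sketch as written would not close. For (b), Chevalley--Warning does give $\mathsf d (C_p \oplus C_p) \le 2p-2$ (apply it to $\sum_i a_i x_i^{p-1} = \sum_i b_i x_i^{p-1} = 0$ in $\ell \ge 2p-1$ variables), but it is a theorem about polynomials over a \emph{field}; for $C_m \oplus C_n$ with $m \mid n$ composite there is no field to work over, and, as you yourself observe, the Sylow components do not recombine additively --- so ``reducing to a Chevalley--Warning system over the prime fields dividing $|G|$'' restates the difficulty rather than resolving it. Olson's actual proof is an induction on $|G|$ through the subgroup $H = pG$ with $G/H \cong C_p \oplus C_p$, driven by the separate lemma that every sequence of $2p-1$ terms over $C_p \oplus C_p$ has a nonempty zero-sum subsequence of length at most $p$; nothing in your outline supplies this ingredient. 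For (c), the proposed splitting along the projections onto $C^{k+1}_2$ and $C_n$, carried out with the standard block-extraction inequalities (e.g. $\mathsf D (G) \le \mathsf D (H) \, \mathsf D (G/H)$, or the refinement that extracts $\mathsf D (H)$ blocks each of length at most $\mathsf D (G/H)$), yields at best $\mathsf D (G) \le (k+2)n$, which already exceeds the target $k+2n$ for every $k \ge 1$; the other projection is worse. So the role of the hypothesis $k \le 3$ is not merely unexplained --- the mechanism you describe provably cannot reach the claimed bound, and the actual proof of (c) in Geroldinger--Schneider is a substantially more delicate combinatorial analysis that is not recoverable from this outline.
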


\smallskip
\begin{lemma}$($\cite[Theorem 1.1]{Ge-Gr13}$)$ \label{lem:nabD}~
Let $G$ be a finite non-cyclic non-abelian group having a cyclic index $2$ subgroup, and $G'$ be its commutator subgroup. Then
\[
  \mathsf D (G) = \mathsf d (G) + |G'| = \frac{1}{2} |G| + |G'| \,.
\]
\end{lemma}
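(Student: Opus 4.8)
The plan is to reduce $G$ to an explicit presentation and then to establish the two asserted equalities by computing $\mathsf d(G)$ and $\mathsf D(G)$ separately. Since a subgroup of index $2$ is normal, the cyclic index-$2$ subgroup $H = \langle a \rangle \cong C_n$ with $n = \tfrac12|G|$ is normal, so choosing $b \in G \setminus H$ gives $G = \langle a, b\rangle$ with $b^2 = a^e$ and $bab^{-1} = a^s$ for suitable $e, s$ with $\gcd(s,n)=1$ and $s^2 \equiv 1 \pmod n$ (the last because conjugation by $b^2 \in H$ acts trivially on the abelian group $H$). A direct computation gives $[a,b] = a^{s-1}$, whence $G' = \langle a^{s-1}\rangle \subseteq H$ is cyclic of order $m := n/\gcd(n, s-1)$. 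As $\tfrac12|G| = n$, the statement is equivalent to the two equalities $\mathsf d(G) = n$ and $\mathsf D(G) = n + m$, and I would prove each by matching a lower and an upper bound.

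\textbf{Lower bounds.} For $\mathsf d(G) \ge n$, the sequence $a^{[n-1]} \bdot b$ has length $n$ and is product-one free: a nonempty subsequence omitting $b$ has product $a^k$ with $1 \le k \le n-1$, hence $\ne 1_G$, while a subsequence using $b$ has its set of products inside the coset $Hb$, which avoids $1_G$. For $\mathsf D(G) \ge n+m$ I would exhibit a minimal product-one sequence of length $n+m$ directly from the presentation; its precise shape depends on the parameters $(s,e)$ and on the parity of $n$, but in each case it combines terms from $H$ with terms from the coset $Hb$, arranged so that no proper sub-collection is product-one, as one checks for the dihedral groups by hand. Minimality is verified through the content criterion: fixing a product-one ordering $g_1 \cdots g_\ell = 1_G$ of a minimal sequence $U$, the partial products $x_i = g_1 \cdots g_i$ satisfy $x_i \ne 1_G$ for $1 \le i \le \ell - 1$ and are pairwise distinct, because $x_i = x_j$ with $i < j$ would exhibit $U$ as the product of the two nontrivial product-one subsequences $g_{i+1} \bdot \ldots \bdot g_j$ and its complement; checking that no proper product-one subsequence of the candidate $U$ has a product-one complement is then a finite verification.

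\textbf{Upper bounds.} These carry the real content. For $\mathsf d(G) \le n$ I would decompose a product-one free sequence as $S = S_0 \bdot S_1$ into its parts over $H$ and over $Hb$; if $S_1$ is empty then $|S| \le \mathsf d(C_n) = n-1$, and otherwise I would fold pairs of $Hb$-terms into $H$ (a product of two such terms lies in $H$) and run a pigeonhole/partial-sum argument in the cyclic group $C_n$, together with a case analysis on the parity of $|S_1|$, to force a product-one subproduct once $|S|$ exceeds $n$. For $\mathsf D(G) \le n+m$ I would again invoke the distinct-partial-products observation: writing $x_0 = 1_G, x_1, \ldots, x_{\ell-1}$ for the distinct nontrivial partial products of a minimal $U$, these lie in $G = H \sqcup Hb$, which by itself only yields the crude bound $\ell \le |G| = 2n$. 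The improvement to $n+m$ comes from analysing how the $x_i$ occupy the cosets of $G'$: whenever two partial products lie in a common $G'$-coset, the intervening block multiplies into $G'$, and since $H$ is abelian such $H$-blocks may be rearranged freely, so that minimality of $U$ against these rearrangements bounds the occupancy of each $G'$-coset and forces $\ell \le n + m$.

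\textbf{Main obstacle.} I expect the decisive difficulty to be exactly this last occupancy estimate. The naive coset count recovers only $|G|$, and squeezing it down to $\tfrac12|G| + |G'|$ requires using the commutative geometry of $H$ together with the minimality of $U$ in a genuinely global way, rather than the local (single-ordering) argument that suffices for the classical bound $\mathsf D(G) \le |G|$. Once both upper bounds are in place, combining them with the two constructions yields $\mathsf d(G) = n$ and $\mathsf D(G) = n + m$, and since $m = |G'|$ and $n = \tfrac12|G|$ this is precisely the assertion $\mathsf D(G) = \mathsf d(G) + |G'| = \tfrac12|G| + |G'|$.
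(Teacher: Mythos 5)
This lemma is not proved in the paper at all: it is imported verbatim from Geroldinger--Grynkiewicz \cite[Theorem 1.1]{Ge-Gr13}, where it is the main theorem of a long technical article. So the comparison can only be between your outline and that external proof, and measured against either standard your proposal has a genuine gap: everything you actually establish is the easy half. The presentation $G=\langle a,b\rangle$ with $H=\langle a\rangle\cong C_n$ normal, $bab^{-1}=a^{s}$, $G'=\langle a^{s-1}\rangle$ of order $m$, the product-one free sequence $a^{[n-1]}\bdot b$ giving $\mathsf d(G)\ge n$, and the distinct-partial-products observation giving $\mathsf D(G)\le|G|$ are all correct, but none of this is where the theorem lives. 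The three remaining ingredients --- the construction of a minimal product-one sequence of length $n+m$, the bound $\mathsf d(G)\le n$, and above all $\mathsf D(G)\le n+m$ --- are left as declarations of intent (``I would exhibit\ldots'', ``run a pigeonhole/partial-sum argument\ldots'', ``minimality against these rearrangements bounds the occupancy\ldots'') rather than arguments.

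The specific step that would fail as written is the coset-occupancy heuristic for $\mathsf D(G)\le n+m$. There are $|G|/|G'|=2n/m$ cosets of $G'$, so to force $\ell\le n+m$ by bounding the number of distinct partial products per coset you would need each coset to carry at most roughly $m(n+m)/(2n)\approx m/2$ of them, and no such per-coset bound follows from ``$H$-blocks may be rearranged freely'': rearranging the terms of an intervening block with product in $G'$ changes neither which coset the endpoints lie in nor the number of partial products landing there, so minimality gives you no contradiction from mere occupancy counts. The actual proof in \cite{Ge-Gr13} does not proceed this way; it runs a delicate structural analysis of near-maximal sequences (this is why the result occupies most of that paper). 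Likewise, for $\mathsf d(G)\le n$ the clean route is the Olson--White theorem that $\mathsf d(G)\le\frac12|G|$ for every finite non-cyclic group, which you neither cite nor reprove. As it stands the proposal is a plausible plan whose hard steps are all deferred, so it cannot be accepted as a proof of the lemma.
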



\medskip
\section{Proof of Theorem~\ref{thm:classify}} \label{sec:3}
\bigskip

We start with the basic observation for the Davenport constant.
If $H$ is a proper subgroup of $G$, then $\mathsf d (H) < \mathsf d (G)$ and $\mathsf D (H) \le \mathsf D (G)$.
The latter inequality is always strict when $G$ is abelian, but equality may happen for a non-abelian group $G$ (see \cite[Example 2.1]{Ge-Gr-Oh-Zh22}).

\smallskip
\begin{lemma} \label{lem:ineqDC}~
Let $H \subset G$ be a proper subgroup, and $N \unlhd G$ be a normal subgroup.
\begin{enumerate}
\item $\mathsf d (H) + 2 \le \mathsf D (G)$, and in particular, $\mathsf D (H) + 1 \le \mathsf D (G)$ if $H$ is abelian.

\smallskip
\item $\mathsf d (N) + \mathsf d (G/N) \le \mathsf d (G)$.

\smallskip
\item If $G/N$ is abelian, then $\mathsf D (G/N) \le \mathsf D (G)$, and in particular, $\mathsf D (G/G') \le \mathsf D (G)$.
\end{enumerate}
\end{lemma}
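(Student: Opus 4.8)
The plan is to regard part (2) as the heart of the statement and to derive parts (1) and (3) from it together with the elementary facts already recorded before the lemma: that $\mathsf{d}(H) < \mathsf{d}(G)$ for every proper subgroup $H$, that $\mathsf{d}(G) + 1 \le \mathsf{D}(G)$, and that $\mathsf{D}(G) = \mathsf{d}(G) + 1$ whenever $G$ is abelian.

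For part (2) I would argue by an explicit concatenation combined with a projection argument. Write $\varphi \colon G \to G/N$ for the canonical epimorphism. Choose a product-one free sequence $\overline{S} = \overline{a}_1 \bdot \ldots \bdot \overline{a}_s$ over $G/N$ of maximal length $s = \mathsf{d}(G/N)$, and lift each $\overline{a}_i$ to some $a_i \in G$ with $\varphi(a_i) = \overline{a}_i$; choose also a product-one free sequence $T = b_1 \bdot \ldots \bdot b_t$ over $N$ of maximal length $t = \mathsf{d}(N)$, with all $b_j \in N$. I would then set $S = a_1 \bdot \ldots \bdot a_s \bdot b_1 \bdot \ldots \bdot b_t \in \mathcal{F}(G)$ and claim that $S$ is product-one free over $G$; since $|S| = s + t$, this yields $\mathsf{d}(N) + \mathsf{d}(G/N) \le \mathsf{d}(G)$.

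To verify the claim, suppose toward a contradiction that some non-trivial subsequence $R$ of $S$ satisfies $1_G \in \pi(R)$, and split $R = R_1 \bdot R_2$ according to whether terms come from the lifted block $a_1 \bdot \ldots \bdot a_s$ or from $T$. Fixing a product-one equation of $R$ and applying $\varphi$, the terms of $R_2$ lie in $N$ and hence map to the identity of $G/N$, so deleting them leaves, in the induced order, a product-one equation of $\varphi(R_1)$, a subsequence of $\overline{S}$. Product-one freeness of $\overline{S}$ forces $R_1$ to be trivial, so $R = R_2$ is a non-trivial subsequence of $T$; as all its terms lie in the subgroup $N$, its product is computed in $N$ and equals $1_N$, contradicting that $T$ is product-one free over $N$. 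The one point demanding care here---and the main obstacle---is precisely this non-abelian bookkeeping: since ``product-one'' only asserts the existence of \emph{some} ordering, one must check that discarding the $N$-terms from a given product-one equation of $R$ genuinely produces a legitimate product-one equation of $\varphi(R_1)$ in $G/N$, and not merely an arbitrary rearrangement.

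Finally, parts (1) and (3) follow quickly. For (1), the strict inequality $\mathsf{d}(H) < \mathsf{d}(G)$ gives $\mathsf{d}(H) + 1 \le \mathsf{d}(G)$, whence $\mathsf{d}(H) + 2 \le \mathsf{d}(G) + 1 \le \mathsf{D}(G)$; when $H$ is abelian the identity $\mathsf{D}(H) = \mathsf{d}(H) + 1$ upgrades this to $\mathsf{D}(H) + 1 \le \mathsf{D}(G)$. For (3), since $\mathsf{d}(N) \ge 0$, part (2) yields $\mathsf{d}(G/N) \le \mathsf{d}(G)$, and the abelianity of $G/N$ gives $\mathsf{D}(G/N) = \mathsf{d}(G/N) + 1 \le \mathsf{d}(G) + 1 \le \mathsf{D}(G)$; the special case $N = G'$ applies because $G/G'$ is always abelian.
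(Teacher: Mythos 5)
Your proposal is correct, and each of the three parts is handled by a genuinely different route than the paper's. For part (1) the paper constructs an explicit minimal product-one sequence $g_1 \bdot \ldots \bdot g_{\ell} \bdot h^{-1} \bdot (hg^{-1})$ of length $\mathsf d (H) + 2$ and verifies minimality directly, whereas you chain $\mathsf d (H) + 2 \le \mathsf d (G) + 1 \le \mathsf D (G)$ using the strict inequality $\mathsf d (H) < \mathsf d (G)$ stated just before the lemma; this is shorter but leans on that (unproved there) observation, whose justification is essentially the same ``append an element of $G \setminus H$'' trick the paper carries out explicitly, so the two arguments have the same mathematical content packaged differently. For part (2) the paper simply cites \cite[Proposition 3.9.1]{Cz-Do-Ge16}, while you supply the lifting-and-concatenation proof in full; your treatment of the one delicate point --- that deleting the $N$-terms from a fixed product-one equation of $R$ yields a legitimate ordered product-one equation of $\varphi(R_1)$, because those terms map to $1_{G/N}$ and can be erased without disturbing the induced order of the remaining factors --- is exactly what is needed, so this part is a correct self-contained replacement for the citation. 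For part (3) your uniform chain $\mathsf D (G/N) = \mathsf d (G/N) + 1 \le \mathsf d (N) + \mathsf d (G/N) + 1 \le \mathsf d (G) + 1 \le \mathsf D (G)$ is valid for every finite $G$, since $\mathsf d (G) + 1 \le \mathsf D (G)$ always holds; this shows that the paper's case distinction between abelian and non-abelian $G$, and its appeal to \cite[Lemma 4.2.1]{Oh19} in the non-abelian case, are not actually needed. On balance your version is more streamlined and more self-contained on part (2), at the modest cost of relying on the stated-but-unproved monotonicity $\mathsf d (H) < \mathsf d (G)$ in part (1).
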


\begin{proof}
1. Let $S = g_1 \bdot \ldots \bdot g_{\ell}$ be a product-one free sequence over $H$ of length $|S| = \ell = \mathsf d (H)$.
Then, $g = g_1 \cdots g_{\ell} \in H$, and take $h \in G \setminus H$. Then
\[
  U = g_1 \bdot \ldots \bdot g_{\ell} \bdot h^{-1} \bdot (hg^{-1}) \in \mathcal B (G) \,.
\]
Assume to the contrary that $U = U_1 \bdot U_2$ for some $U_1, U_2 \in \mathcal B (G)$ with $h^{-1} \t U_1$.
Since $S$ is product-one free, it follows that $hg^{-1} \t U_2$, so that $U_1 = h^{-1} \bdot g_1 \bdot \ldots \bdot g_k$ for some $k < \ell$, whence $h \in \pi (g_1 \bdot \ldots \bdot g_k) \subseteq H$, a contradiction to the choice of $h$.
Thus, $U$ is a minimal product-one sequence over $G$, and it follows that $\mathsf d (H) + 2 \le \mathsf D (G)$.
In particular, if $H$ is abelian, then $\mathsf D (H) = \mathsf d (H) + 1 < \mathsf d (H) + 2 \le \mathsf D (G)$.

2. See \cite[Proposition 3.9.1]{Cz-Do-Ge16}.

3. If $G$ is abelian, then $\mathsf d (G) + 1 = \mathsf D (G)$, and so it follows by 2. that
\[
  \mathsf D (G/N) = \mathsf d (G/N) + 1 \le \mathsf d (N) + \mathsf d (G/N) + 1 \le \mathsf d (G) + 1 = \mathsf D (G) \,.
\]
If $G$ is non-abelian, then for any sequence $T = g_1 N \bdot \ldots \bdot g_{\ell}N$ with $\ell = \mathsf D (G)$, the sequence $g_1 \bdot \ldots \bdot g_{\ell}$ over $G$ has a proper product-one subsequence by \cite[Lemma 4.2.1]{Oh19}, whence $T$ must have a proper product-one subsequence.
Thus, $\mathsf D (G/N) = \mathsf d (G/N) + 1 \le |T| = \mathsf D (G)$.
\end{proof}

\smallskip
In \cite{Cz-Do-Sz18}, Cziszter, Domokos, and Sz\"oll\H{o}si computed the small and large Davenport constants for all non-abelian groups of order less than 32 using computer program.
Recently, Andr\'as, together with them, has extended their calculation to all non-abelian groups of order at most 42 (see \cite[Section 8]{An-Cz-Do-Sz25}).
For the convenience of the reader, we provide the full table of the Davenport constants for all non-abelian groups of order at most 42.

\smallskip
\begin{lemma}[\cite{Cz-Do-Sz18,An-Cz-Do-Sz25}] \label{lem:dD}~
The following table represents the Davenport constants $\mathsf d (G)$ and $\mathsf D (G)$ for all non-abelian groups $G$ with $|G| \le 42$.

\begingroup
\DefTblrTemplate{caption}{default}{}    
\DefTblrTemplate{capcont}{default}{}    
\DefTblrTemplate{contfoot}{default}{}   
\begin{longtblr}{
  colspec = {|X[0.6, c]|X[1.8, c]|X[0.6, c]|X[0.6, c]||X[0.6, c]|X[1.8, c]|X[0.6, c]|X[0.6, c]|},
  rowhead = 1,
  hlines,
  row{even} = {gray!15},
}
\rule{0pt}{11pt} $GAP$ & $G$ & $\mathsf d (G)$ & $\mathsf D (G)$ & $GAP$ & $G$ & $\mathsf d (G)$ & $\mathsf D (G)$ \\
$(6,1)$   & $D_6$ & $3$ & $6$ &
    $(32,18)$  & $D_{32}$ & $16$ & $24$ \\
$(8,3)$   & $D_8$ & $4$ & $6$ &
    $(32,19)$  & $SD_{32}$ & $16$ & $24$ \\
$(8,4)$   & $Q_8$ & $4$ & $6$ &
    $(32,20)$  & $Q_{32}$ & $16$ & $24$ \\
$(10,1)$  & $D_{10}$ & $5$ & $10$ &
    $(32,22)$  & $C_2 \times (C^{2}_2 \rtimes C_4)$ & $6$ & $8$ \\
$(12,1)$  & $Q_{12}$ & $6$ & $9$ &
    $(32,23)$  & $C_2 \times (C_4 \rtimes C_4)$ & $7$ & $9$ \\
$(12,3)$  & $A_4$ & $4$ & $7$ &
    $(32,24)$  & $C^{2}_4 \rtimes C_2$ & $7$ & $9$ \\
$(12,4)$  & $D_{12}$ & $6$ & $9$ &
    $(32,25)$  & $C_4 \times D_8$ & $7$ & $9$ \\
$(14,1)$  & $D_{14}$ & $7$ & $14$ &
    $(32,26)$  & $C_4 \times Q_8$ & $7$ & $9$ \\
$(16,3)$  & $C^{2}_2 \rtimes C_4$ & $5$ & $7$ &
    $(32,27)$  & $C^{4}_2 \rtimes C_2$ & $6$ & $9$ \\
$(16,4)$  & $C_4 \rtimes C_4$ & $6$ & $8$ &
    $(32,28)$  & $(C^{2}_2 \times C_4) \rtimes C_2$ & $7$ & $10$ \\
$(16,6)$  & $M_{16}$ & $8$ & $10$ &
    $(32,29)$  & $(C_2 \times Q_8) \rtimes C_2$ & $7$ & $10$ \\
$(16,7)$  & $D_{16}$ & $8$ & $12$ &
    $(32,30)$  & $(C^{2}_2 \times C_4) \rtimes C_2$ & $7$ & $10$ \\
$(16,8)$  & $SD_{16}$ & $8$ & $12$ &
    $(32,31)$  & $C^{2}_4 \rtimes C_2$ & $7$ & $10$ \\
$(16,9)$  & $Q_{16}$ & $8$ & $12$ &
    $(32,32)$  & $C^{2}_2 \boldsymbol{.} C^{3}_2$ & $7$ & $10$ \\
$(16,11)$  & $C_2 \times D_8$ & $5$ & $7$ &
    $(32,33)$  & $C^{2}_4 \rtimes C_2$ & $7$ & $10$ \\
$(16,12)$  & $C_2 \times Q_8$ & $5$ & $7$ &
    $(32,34)$  & $C^{2}_4 \rtimes C_2$ & $7$ & $10$ \\
$(16,13)$  & $(C_2 \times C_4) \rtimes C_2$ & $5$ & $7$ &
    $(32,35)$  & $C_4 \rtimes Q_8$ & $7$ & $10$ \\
$(18,1)$  & $D_{18}$ & $9$ & $18$ &
    $(32,37)$  & $C_2 \times (C_8 \rtimes C_2)$ & $9$ & $11$ \\
$(18,3)$  & $C_3 \times D_6$ & $7$ & $10$ &
    $(32,38)$  & $(C_2 \times C_8) \rtimes C_2$ & $9$ & $11$ \\
$(18,4)$  & $C^{2}_3 \rtimes C_2$ & $5$ & $10$ &
    $(32,39)$  & $C_2 \times D_{16}$ & $9$ & $13$ \\
$(20,1)$  & $Q_{20}$ & $10$ & $15$ &
    $(32,40)$  & $C_2 \times SD_{16}$ & $9$ & $13$ \\
$(20,3)$  & $C_5 \rtimes C_4$ & $7$ & $10$ &
    $(32,41)$  & $C_2 \times Q_{16}$ & $9$ & $13$ \\
$(20,4)$  & $D_{20}$ & $10$ & $15$ &
    $(32,42)$  & $(C_2 \times C_8) \rtimes C_2$ & $9$ & $13$ \\
$(21,1)$  & $C_7 \rtimes C_3$ & $8$ & $14$ &
    $(32,43)$  & $C_8 \rtimes C^{2}_2$ & $9$ & $12$ \\
$(22,1)$  & $D_{22}$ & $11$ & $22$ &
    $(32,44)$  & $(C_2 \times Q_8) \rtimes C_2$ & $9$ & $12$ \\
$(24,1)$ & $C_3 \rtimes C_8$ & $12$ & $15$ &
    $(32,46)$  & $C^{2}_2 \times D_8$ & $6$ & $8$ \\
$(24,3)$ & $\SL (2,3)$ & $7$ & $13$ &
    $(32,47)$  & $C^{2}_2 \times Q_8$ & $6$ & $8$ \\
$(24,4)$ & $Q_{24}$ & $12$ & $18$ &
    $(32,48)$ & $C_2 \times \big( (C_2 \times C_4) \rtimes C_2 \big)$ & $6$ & $8$ \\
$(24,5)$ & $C_4 \times D_6$ & $12$ & $15$ &
    $(32,49)$ & $C^{3}_2 \rtimes C^{2}_2$ & $6$ & $8$ \\
$(24,6)$ & $D_{24}$ & $12$ & $18$ &
    $(32,50)$ & $(C_2 \times Q_8) \rtimes C_2$ & $6$ & $8$ \\
$(24,7)$ & $C_2 \times Q_{12}$ & $8$ & $11$ &
    $(34,1)$ & $D_{34}$ & $17$ & $34$ \\
$(24,8)$ & $C_3 \rtimes D_{8}$ & $7$ & $14$ &
    $(36,1)$ & $C_9 \rtimes C_4$ & $18$ & $27$ \\
$(24,10)$ & $C_3 \times D_{8}$ & $12$ & $14$ &
    $(36,3)$ & $C^{2}_2 \rtimes C_9$ & $10$ & $13$ \\
$(24,11)$ & $C_3 \times Q_8$ & $12$ & $14$ &
    $(36,4)$ & $D_{36}$ & $18$ & $27$ \\
$(24,12)$ & $S_4$ & $6$ & $12$ &
    $(36,6)$ & $C_3 \times (C_3 \rtimes C_4)$ & $13$ & $16$ \\
$(24,13)$ & $C_2 \times A_4$ & $7$ & $10$ &
    $(36,7)$ & $C^{2}_3 \rtimes C_4$ & $8$ & $13$ \\
$(24,14)$ & $C_2 \times D_{12}$ & $7$ & $10$ &
    $(36,9)$ & $C^{2}_3 \rtimes C_4$ & $7$ & $12$ \\
$(26,1)$ & $D_{26}$ & $13$ & $26$ &
    $(36,10)$ & $D^{2}_6$ & $8$ & $14$ \\
$(27,3)$ & $H_{27}$ & $6$ & $8$ &
    $(36,11)$ & $C_3 \times A_4$ & $8$ & $11$ \\
$(27,4)$ & $C_9 \rtimes C_3$ & $10$ & $12$ &
    $(36,12)$ & $C_6 \times D_6$ & $10$ & $13$ \\
$(28,1)$ & $Q_{28}$ & $14$ & $21$ &
    $(36,13)$ & $C_2 \times (C^{2}_3 \rtimes C_2)$ & $8$ & $13$ \\
$(28,3)$ & $D_{28}$ & $14$ & $21$ &
    $(38,1)$ & $D_{38}$ & $19$ & $38$ \\
$(30,1)$ & $C_5 \times D_6$ & $15$ & $18$ &
    $(39,1)$ & $C_{13} \rtimes C_3$ & $14$ & $26$ \\
$(30,2)$ & $C_3 \times D_{10}$ & $15$ & $20$ &
    $(40,1)$ & $C_5 \rtimes C_8$ & $20$ & $25$ \\
$(30,3)$ & $D_{30}$ & $15$ & $30$ &
    $(40,3)$ & $C_5 \rtimes C_8$ & $12$ & $15$ \\
$(32,2)$ & $(C_2 \times C_4) \rtimes C_4$ & $7$ & $9$ &
    $(40,4)$ & $C_5 \rtimes Q_8$ & $20$ & $30$ \\
$(32,4)$ & $C_8 \rtimes C_4$ & $10$ & $12$ &
    $(40,5)$ & $C_4 \times D_{10}$ & $20$ & $25$ \\
$(32,5)$ & $(C_2 \times C_8) \rtimes C_2$ & $9$ & $11$ &
    $(40,6)$ & $D_{40}$ & $20$ & $30$ \\
$(32,6)$ & $C^{3}_2 \rtimes C_4$ & $7$ & $10$ &
    $(40,7)$ & $C_2 \times (C_5 \rtimes C_4)$ & $12$ & $17$ \\
$(32,7)$ & $(C_8 \rtimes C_2) \rtimes C_2$ & $9$ & $12$ &
    $(40,8)$ & $(C_2 \times C_{10}) \rtimes C_2$ & $11$ & $22$ \\
$(32,8)$ & $C^{2}_2 \boldsymbol{.} (C_2 \times C_4)$ & $9$ & $12$ &
    $(40,10)$ & $C_5 \times D_8$ & $20$ & $22$ \\
$(32,9)$  & $(C_2 \times C_8) \rtimes C_2$ & $9$ & $13$ &
    $(40,11)$ & $C_5 \times Q_8$ & $20$ & $22$ \\
$(32,10)$  & $Q_8 \rtimes C_4$ & $9$ & $13$ &
    $(40,12)$ & $C_2 \times (C_5 \rtimes C_4)$ & $12$ & $15$ \\
$(32,11)$  & $C^{2}_4 \rtimes C_2$ & $9$ & $14$ &
    $(40,13)$ & $C^{2}_2 \times D_{10}$ & $11$ & $16$ \\
$(32,12)$  & $C_4 \rtimes C_8$ & $10$ & $12$ &
    $(42,1)$ & $C_7 \rtimes C_6$ & $11$ & $14$ \\
$(32,13)$  & $C_8 \rtimes C_4$ & $10$ & $14$ &
    $(42,2)$ & $C_2 \times (C_7 \rtimes C_3)$ & $15$ & $21$ \\
$(32,14)$  & $C_8 \rtimes C_4$ & $10$ & $14$ &
    $(42,3)$ & $C_7 \times D_6$ & $21$ & $24$ \\
$(32,15)$  & $C_4 \boldsymbol{.} D_8$ & $10$ & $14$ &
    $(42,4)$ & $C_3 \times D_{14}$ & $21$ & $28$ \\
$(32,17)$  & $C_{16} \rtimes C_2$ & $16$ & $18$ &
    $(42,5)$ & $D_{42}$ & $21$ & $42$ \\
\end{longtblr}
\endgroup

\end{lemma}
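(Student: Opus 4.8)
Since the statement is a tabulation of known computational data, my plan is twofold: first to describe the finite search that produces the entries, and then to corroborate as many of them as possible by closed-form formulas so that the table can be checked independently of the machine computation. The values themselves are taken from the computer computations of Cziszter, Domokos, and Sz\"oll\H{o}si \cite{Cz-Do-Sz18} for the groups of order less than $32$ and from \cite{An-Cz-Do-Sz25} for the groups of order between $32$ and $42$; the present lemma only assembles them. To explain the method, recall that every minimal product-one sequence has length at most $|G|$, so for a fixed $G$ both $\mathsf d (G)$ and $\mathsf D (G)$ can be read off from a finite search over the multiplicity vectors $(\mathsf v_g (S))_{g \in G}$ with $|S| \le |G|$. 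For each candidate $S$ one records the set of products $\pi (T)$ of its subsequences: the product-one free condition $1_G \notin \pi (T)$ for all non-trivial $T \mid S$ determines $\mathsf d (G)$, while testing minimality (no proper product-one subsequence) determines $\mathsf D (G)$.

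A substantial portion of the table can then be verified by hand. Every non-cyclic non-abelian group $G$ appearing in the list that possesses a cyclic subgroup of index $2$ satisfies $\mathsf D (G) = \mathsf d (G) + |G'| = \frac{1}{2}|G| + |G'|$ by Lemma~\ref{lem:nabD}; this covers all the dihedral, dicyclic, semidihedral, and modular maximal-cyclic groups in the table, as well as groups such as $C_3 \rtimes C_8$, $C_4 \times D_6$, and $C_5 \times D_6$. For example, for $D_{2n}$ one has $G' \cong C_n$, so $\mathsf D (D_{2n}) = n + |G'|$, which reproduces $\mathsf D (D_6) = 6$, $\mathsf D (D_8) = 6$, $\mathsf D (D_{16}) = 12$, and $\mathsf D (D_{32}) = 24$; for the dicyclic group $Q_{4n}$ one has $G' \cong C_n$, reproducing $\mathsf D (Q_{12}) = 9$, $\mathsf D (Q_{20}) = 15$, and $\mathsf D (Q_{40}) = 30$. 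Likewise the non-abelian groups of order $pq$ with distinct primes $p \mid q-1$, namely $C_7 \rtimes C_3$ and $C_{13} \rtimes C_3$, satisfy $\mathsf D (G) = 2q$ by \cite[Theorem 5.1]{Gr13b}, giving $14$ and $26$ respectively. These formulas agree with every corresponding entry and so serve as an independent consistency check on a large block of the table.

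The remaining entries --- chiefly the non-abelian $2$-groups of order $16$ and $32$ with large commutator subgroup, together with $A_4$, $S_4$, $\SL (2,3)$, and the metabelian groups of orders $24$, $36$, and $40$ --- do not fall under any such closed form, and for these I would rely on the direct computation of the cited references. The main obstacle, and the reason the computation is already delicate for groups of such modest order, is the non-commutativity: the product set $\pi (T)$ of a subsequence depends on the ordering of its terms and hence cannot be assembled by a plain subset-sum recursion as in the abelian case. One must instead track, for each sub-multiset, the entire set of attainable products, which is where the algorithms of \cite{Cz-Do-Sz18,An-Cz-Do-Sz25} employ careful bookkeeping together with pruning (using, for instance, that $\pi (S)$ lies in a single $G'$-coset and that $\mathsf d (H) < \mathsf d (G)$ for any proper subgroup $H$, as in Lemma~\ref{lem:ineqDC}). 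I therefore take the tabulated values as established in those references, checked in the cases above against Lemma~\ref{lem:nabD} and the order-$pq$ formula.
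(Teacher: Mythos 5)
Your proposal coincides with the paper's treatment: Lemma~\ref{lem:dD} is presented there with no argument beyond the citation of \cite{Cz-Do-Sz18,An-Cz-Do-Sz25}, which is exactly what you do, and your additional consistency checks against Lemma~\ref{lem:nabD} and the order-$pq$ formula of \cite{Gr13b} are a sound (if optional) corroboration of a large block of entries. One small slip in those checks: for $D_{2n}$ with $n$ even the commutator subgroup is $C_{n/2}$ rather than $C_n$ (e.g.\ $|D_8'|=2$, giving $\mathsf D(D_8)=4+2=6$), although the numerical values you actually quote are the correct ones.
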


\smallskip
Based on Lemma~\ref{lem:dD}, we can obtain the complete list of non-abelian groups of order at most 42 having the Davenport constant less than 10.
Thus, the following series of lemmas deals with a lower bound for the Davenport constant of finite non-abelian groups of order exceeding 42.

We first present a lower bound for the Davenport constant of non-abelian 2-groups as follows.

\smallskip
\begin{lemma} \label{lem:2group}~
Let $G$ be a non-abelian $2$-group with $|G| = 2^{n}$ for $n \ge 5$.
\begin{enumerate}
\item If $n=5$, then either $\mathsf D (G) \ge 10$, or $\mathsf D (G) \in \{ 8, 9 \}$ and $G$ is isomorphic to one of the following groups.
      \begin{table}[h]
      \hspace{1.5cm}
      \begin{minipage}{.60\textwidth}
      \begin{tabular}{|c|c|c|} \hline
      \rule{0pt}{13pt} $\mathsf D (G)$ & $G$ & $GAP$                                                   \\[.5em] \hhline{|=|=|=|}
      \rule{0pt}{13pt} \multirow{6}{*}{$8$} & $C_2 \times (C^{2}_2 \rtimes C_4)$ & $(32,22)$ \\ & $C^{2}_2 \times D_8$ & $(32,46)$ \\ & $C^{2}_2 \times Q_8$ & $(32,47)$ \\ & $C_2 \times \big((C_2 \times C_4) \rtimes C_2\big)$ & $(32,48)$ \\ & $C^{3}_2 \rtimes C^{2}_2$ & $(32,49)$ \\ & $(C_2 \times Q_8) \rtimes C_2$ & $(32,50)$                         \\[.5em] \hline
      \end{tabular}
      \end{minipage} \hspace{-2.5cm}
      \begin{minipage}{.40\textwidth}
      \begin{tabular}{|c|c|c|} \hline
      \rule{0pt}{13pt} $\mathsf D (G)$ & $G$ & $GAP$                                                  \\[.5em] \hhline{|=|=|=|}
      \rule{0pt}{13pt} \multirow{6}{*}{$9$} & $(C_2 \times C_4) \rtimes C_4$ & $(32,2)$ \\ & $C_2 \times (C_4 \rtimes C_4)$ & $(32,23)$ \\ & $C^{2}_4 \rtimes C_2$ & $(32,24)$ \\ & $C_4 \times D_8$ & $(32,25)$ \\ & $C_4 \times Q_8$ & $(32,26)$ \\ & $C^{4}_2 \rtimes C_2$ & $(32,27)$  \\[.5em] \hline
      \end{tabular}
      \end{minipage}
      \end{table}

\item If $n \ge 6$, then we obtain $\mathsf D (G) \ge 8$.
\end{enumerate}
\end{lemma}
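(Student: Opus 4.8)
For part (1) the order is $|G| = 32$, so every non-abelian $2$-group in question already occurs in the table of Lemma~\ref{lem:dD}. I would simply read off the entries with $GAP$-identifier $(32,\ast)$: all of them satisfy $\mathsf D(G) \ge 8$; the value $8$ is attained exactly by $(32,22),(32,46),(32,47),(32,48),(32,49),(32,50)$, the value $9$ exactly by $(32,2),(32,23),(32,24),(32,25),(32,26),(32,27)$, and every other non-abelian group of order $32$ has $\mathsf D(G)\ge 10$. This reproduces the two displayed tables, so (1) requires no further argument.

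For part (2), where $|G| = 2^n$ with $n \ge 6$, the plan is to pass to a well-chosen proper subgroup of order $32$ and combine Lemma~\ref{lem:ineqDC}(1) with the table of Lemma~\ref{lem:dD}. The one numerical input I would record from the abelian side is that, by Lemma~\ref{lem:abD}, every abelian group of order $32$ has $\mathsf D \ge 7$ with the single exception of $C_2^{5}$, for which $\mathsf D(C_2^{5}) = 6$. Consequently, any subgroup $H \le G$ of order $32$ with $H \not\cong C_2^{5}$ already forces $\mathsf D(G) \ge 8$: if $H$ is non-abelian then $\mathsf D(H) \ge 8$ by Lemma~\ref{lem:dD} and $\mathsf D(G) \ge \mathsf D(H)$ by subgroup monotonicity, while if $H$ is abelian then $\mathsf D(H) \ge 7$ and Lemma~\ref{lem:ineqDC}(1) gives $\mathsf D(G) \ge \mathsf D(H) + 1 \ge 8$ (here $H$ is proper since $|H| = 32 < 2^n$). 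So it suffices to exhibit a single order-$32$ subgroup that is not elementary abelian.

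To produce it, I would note that a non-abelian group is in particular not elementary abelian, so $G$ contains an element $g$ with $\ord(g) \ge 4$. If $\ord(g) \ge 8$, then $\langle g \rangle$ contains a cyclic subgroup $C_{2^{k}}$ with $k \ge 3$, a proper abelian subgroup of Davenport constant $2^{k} \ge 8$, and Lemma~\ref{lem:ineqDC}(1) yields $\mathsf D(G) \ge 9$ immediately. If instead $\ord(g) = 4$, I would invoke the standard fact that in a finite $p$-group any subgroup is contained in subgroups of all larger $p$-power orders (normalizers strictly increase, so at each step one adjoins an element of order $p$ modulo the current subgroup). Applied to $\langle g \rangle$, this gives a subgroup $H$ of order $32$ with $g \in H$; since $H$ contains the order-$4$ element $g$ its exponent is at least $4$, whence $H \not\cong C_2^{5}$. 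This delivers the required subgroup and finishes part (2).

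The only genuinely delicate point is the exponent-$4$ case of part (2): there is no large cyclic subgroup to exploit, and an order-$32$ subgroup could a priori be the ``bad'' group $C_2^{5}$. The resolution is exactly the observation that $C_2^{5}$ is the unique order-$32$ group violating the bound and that it has exponent $2$, so embedding an order-$4$ element into an order-$32$ subgroup automatically avoids it. Before writing the final version I would double-check the two facts on which everything rests, namely that every non-abelian group of order $32$ in Lemma~\ref{lem:dD} has $\mathsf D \ge 8$, and that $C_2^{5}$ is really the only abelian group of order $32$ with $\mathsf D = 6$.
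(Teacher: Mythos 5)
Your proposal is correct. Part (1) is handled exactly as in the paper: the author also just cites the table of Lemma~\ref{lem:dD}, and your reading of the $(32,\ast)$ entries (minimum value $8$, attained by the six groups in the first table, value $9$ by the six in the second, all others $\ge 10$) matches it.

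For part (2) you take a genuinely different, and in fact shorter, route. The paper first reduces to $n=6$, picks a \emph{normal} subgroup $H$ of order $32$, and disposes of all cases except $H \cong C_2^5$ by the same two observations you use (non-abelian $\Rightarrow$ Lemma~\ref{lem:dD}; abelian and not elementary abelian $\Rightarrow$ $\mathsf D(H)\ge 7$ plus Lemma~\ref{lem:ineqDC}.1). The residual case $H \cong C_2^5$ then costs the author a full page: one shows $G = HK$ with $K = \langle f\rangle$ cyclic of order $4$, realizes $G \cong C_2^4 \rtimes C_4$, classifies the possible actions via rational canonical forms of involutions in $\GL(4,2)$, and in each case exhibits by hand an order-$32$ subgroup isomorphic to $C_2^3\times C_4$ or to $C_2\times(C_2^2\rtimes C_4)$. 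Your argument sidesteps this entirely: since a non-abelian $2$-group contains an element $g$ of order at least $4$, the standard normalizer-growth fact for $p$-groups lets you place $\langle g\rangle$ (or a suitable $C_4$ inside it) into a chain of subgroups of orders $8$, $16$, $32$, and the resulting order-$32$ subgroup has exponent at least $4$, so it is automatically not $C_2^5$ and the easy dichotomy applies. This works uniformly for all $n \ge 6$ without the reduction to $n = 6$, and the only external input beyond what the paper already uses is the (genuinely standard) fact that in a finite $p$-group every subgroup of order $p^k$ sits inside one of order $p^{k+1}$. The trade-off is purely one of economy: the paper's computation incidentally produces explicit presentations that are reused nowhere else, whereas your argument proves exactly the stated bound with less machinery. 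I see no gap.
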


\begin{proof}
1. See the table in Lemma~\ref{lem:dD}.

2. Since $\mathsf D (G) \ge \mathsf D (H)$ for any subgroup $H$, it suffices to show that $\mathsf D (G) \ge 8$ for the case where $n = 6$.
Since $G$ is a 2-group, we infer that $G$ has a normal subgroup $H$ with $|H| =  32$.
If $H$ is non-abelian, then item 1. ensures that $\mathsf D (G) \ge \mathsf D (H) \ge 8$.
Then, we may assume that $H$ is abelian, so that $H$ is isomorphic to one of the following groups:
\begin{equation} \label{eq:32}~
  C_{32} \,, \quad C_4 \times C_8 \,, \quad C_2 \times C_{16} \,, \quad C_2 \times C^{2}_4 \,, \quad C^{2}_2 \times C_8 \,, \quad C^{3}_2 \times C_4 \,, \quad C^{5}_2 \,.
\end{equation}
By Lemma~\ref{lem:abD}.(a), we obtain that $\mathsf D (H) = 6$ if $H \cong C^{5}_2$, and $\mathsf D (H) \ge 7$ if otherwise.
Thus, Lemma~\ref{lem:ineqDC}.1 ensures that $\mathsf D (G) \ge \mathsf D (H) + 1 \ge 8$ if $H \ncong C^{5}_2$.

Suppose now that $H \cong C^{5}_2$.
Note that $H \subset G$. If every element in $G \setminus H$ is of order 2, then all elements of $G$ are of order 2, whence $G$ is abelian, a contradiction.
Thus, there exists at least one element $f \in G \setminus H$ with $\ord (f) = 2^{i}$ for some $i \ge 2$, and we set $K = \langle f \rangle$.
If $i \ge 4$, then $K$ is cyclic of order $|K| \ge 16$, and thus $\mathsf D (G) \ge \mathsf D (K) \ge 16$.
If $i = 3$, then $K$ is cyclic of order 8.
Since $H$ is normal in $G$, it follows that $HK$ is a subgroup of $G$.
Since $64 = |G| \ge |HK| = \frac{|H||K|}{|H \cap K|}$, we infer that $H \cap K$ is a subgroup of both $H$ and $K$ of order at least 4, i.e., $H \cap K$ is a cyclic subgroup of $H$ of order at least 4, a contradiction. 
Thus, we can assume that $i = 2$.
Then, we must have that $|H \cap K| = 2$, and so $G = HK$ with $H \cap K = \langle f^{2} \rangle$.
Let $N \cong C^{4}_2 \cong \langle a \rangle \times \langle b \rangle \times \langle c \rangle \times \langle d \rangle$, and $H \cong N \times C_2 \cong N \times \langle f^{2} \rangle$.
Since $H$ is normal in $G$, conjugation of $H$ by $f$ induces an automorphism of $H$, which maps $f^{2}$ to itself, and thus $N$ is mapped to itself under this conjugation.
Since $G = HK$, we infer that $N$ is a normal subgroup of $G$, and $N$ intersects trivially with $K$, whence $G \cong N \rtimes K$.
Let $\varphi \colon K \to \Aut (N)$ be a homomorphism induced from the action of $K$ on $N$ by conjugation.
Note that $f^{2}$ commutes with all elements in $N$, so that $\varphi (f)^{2} = \varphi (f^{2})$ is the identity map on $N$.
Since $G$ is non-abelian, it follows that $\varphi [K] \le \Aut (N)$ is a non-trivial subgroup of order 2.
Note that $\Aut (H) \cong \GL (4,2)$ (see \cite[Proposition 4.17]{Du-Fo04}) and $|\GL (4,2)| = \prod_{i=0}^{3}(2^{4} - 2^{i}) = 2^{6} 3^{2} 5^{1} 7^{1}$.
Let $A \in \GL (4,2)$ be such that $A^{2} = I$.
Then, the minimal polynomial of $A$ divides $x^{2} - 1$, and $x^{2} - 1 = (x+1)^{2} \in \mathbb Z_2 [x]$.
Under the constraints of the rational canonical form, we obtain the following permissible lists of invariant factors:
\begin{enumerate}
\item[(i)] $(x+1)$, $(x+1)$, $(x+1)$, $(x+1)$

\item[(ii)] $(x+1)$, $(x+1)$, $(x+1)^{2}$

\item[(iii)] $(x+1)^{2}$, $(x+1)^{2}$
\end{enumerate}
Then, (i) corresponds to the identity matrix in $\GL (4,2)$, and (ii) (resp., (iii)) corresponds to the matrix $B = \begin{bmatrix} 1 & & & \\ & 1 & & \\ & & 0 & 1 \\ & & 1 & 0 \end{bmatrix}$ $\Bigg($ resp., $C = \begin{bmatrix}  0 & 1 & & \\ 1 & 0 & & \\ & & 0 & 1 \\ & & 1 & 0 \end{bmatrix} \Bigg)$.
Recall that $N \cong \langle a \rangle \times \langle b \rangle \times \langle c \rangle \times \langle d \rangle$, $K \cong \langle f \rangle$, and $\varphi [K] \le \Aut (N)$ is a subgroup of order 2, so that $\varphi [K]$ is either $\langle B \rangle$, or $\langle C \rangle$.

If $\varphi [K] = \langle B \rangle$, then $G \cong \langle a, b, c, d, f \mid a^{2} = b^{2} = c^{2} = d^{2} = f^{4} = 1_G, \mbox{ and } ab = ba, ac = ca, ad = da, bc = cb, bd = db, cd = dc, fa = af, fb = bf, fcf^{-1} = d, fdf^{-1} = c \rangle$.
Let $x = cd \in G$.
Then, it is easy to see that $\ord (x) = 2$ and $fx = xf$.
This means that $G$ has an abelian subgroup $L \cong \langle a, b, x, f \rangle \cong C^{3}_2 \times C_4$ of order 32, and it follows by Lemmas~\ref{lem:ineqDC}.1 and \ref{lem:abD}.(a) that $\mathsf D (G) \ge \mathsf D (L) + 1 = 8$.

If $\varphi [K] = \langle C \rangle$, then $G \cong \langle a, b, c, d, f \mid a^{2} = b^{2} = c^{2} = d^{2} = f^{4} = 1_G, \mbox{ and } ab = ba, ac = ca, ad = da, bc = cb, bd = db, cd = dc, faf^{-1} = b, fbf^{-1} = a, fcf^{-1} = d, fdf^{-1} = c \rangle$.
Let $x = ab$ and $y = cd \in G$.
Then, it is easy to see that $\ord (x) = 2$, $\ord (y) = 2$, $fx = xf$, and $fy = yf$.
Thus, $G$ has a subgroup $L \cong \langle x, a, y, f \mid x^{2} = a^{2} = y^{2} = f^{4} = 1_G, \mbox{ and } xa = ax, xy = yx, xf = fx, ay = ya, faf^{-1} = ax, fy = yf \rangle \cong C_2 \times (C^{2}_2 \rtimes C_4)$ of order 32, and hence it follows by item 1. that $\mathsf D (G) \ge \mathsf D (L) = 8$.
\end{proof}

\smallskip
The following results present a lower bound for the Davenport constants of finite groups that have a specific subgroup.

\smallskip
\begin{lemma} \label{lem:order9}~
If a finite group $G$ has an element $g$ such that either $\ord (g) \ge 10$ or $\ord (g) = 9$ with $\langle g \rangle \subsetneq G$, then $\mathsf D (G) \ge 10$.
\end{lemma}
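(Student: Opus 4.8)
The plan is to split the argument along the two hypotheses and, in each case, exhibit a cyclic subgroup whose Davenport constant already forces $\mathsf D(G) \ge 10$, transferring the bound to $G$ via the monotonicity $\mathsf D(H) \le \mathsf D(G)$ recorded at the start of this section together with the abelian refinement in Lemma~\ref{lem:ineqDC}.1.

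First I would treat the case $\ord(g) \ge 10$. Put $C = \langle g \rangle$, a cyclic subgroup of order $\ord(g) \ge 10$. By Lemma~\ref{lem:abD}.(b) (a cyclic group has rank one) we have $\mathsf D(C) = \ord(g) \ge 10$, and since $\mathsf D(H) \le \mathsf D(G)$ for every subgroup $H \le G$ --- with equality in the degenerate case $C = G$, where $G$ itself is cyclic --- we conclude $\mathsf D(G) \ge \mathsf D(C) \ge 10$. No properness assumption enters here: the bound $\mathsf D(C) \ge 10$ suffices regardless of whether $\langle g \rangle$ exhausts $G$.

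Next I would handle the case $\ord(g) = 9$ with $\langle g \rangle \subsetneq G$. Now $C = \langle g \rangle \cong C_9$ is a proper abelian subgroup, and by Lemma~\ref{lem:abD} (as a rank-one $3$-group) its Davenport constant is only $\mathsf D(C) = 9$, which on its own falls one short of the target. The decisive point is the strict containment $C \subsetneq G$: it permits the sharpened inequality for a proper abelian subgroup in Lemma~\ref{lem:ineqDC}.1, namely $\mathsf D(C) + 1 \le \mathsf D(G)$, and hence $\mathsf D(G) \ge 9 + 1 = 10$.

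These two cases exhaust the hypotheses, so together they complete the proof. The only genuine subtlety --- what one might call the obstacle --- lies entirely in the order-$9$ alternative: since $\mathsf D(C_9) = 9 < 10$, plain monotonicity $\mathsf D(C) \le \mathsf D(G)$ is insufficient, and one must spend the proper containment to gain the extra unit from the abelian part of Lemma~\ref{lem:ineqDC}.1. This is precisely why the properness assumption is attached to the order-$9$ case but not to the order-$\ge 10$ case, where the cyclic subgroup is already large enough by itself.
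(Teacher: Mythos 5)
Your proof is correct and follows essentially the same route as the paper: the same two-case split on $\ord(g)$, with the cyclic subgroup $\langle g\rangle$ carrying the bound in each case. The only cosmetic difference is in the order-$9$ case, where the paper exhibits the product-one free sequence $g^{[8]}\bdot h$ (for $h\in G\setminus\langle g\rangle$) and uses $\mathsf d(G)+1\le\mathsf D(G)$, whereas you invoke the packaged inequality $\mathsf D(H)+1\le\mathsf D(G)$ of Lemma~\ref{lem:ineqDC}.1, whose proof is that very construction.
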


\begin{proof}
Let $g \in G$ be an element.
If $\ord (g) \ge 10$, then $g^{[\ord (g)]}$ is a minimal product-one sequence, and so $\mathsf D (G) \ge \ord (g) \ge 10$.
If $\ord (g) = 9$ and $\langle g \rangle$ is a proper subgroup of $G$, then we can take $h \in G \setminus \langle g \rangle$, and thus $g^{[8]} \bdot h$ is a product-one free sequence, implying that $\mathsf D (G) \ge \mathsf d (G) + 1 \ge 10$.
\end{proof}

\smallskip
\begin{lemma} \label{lem:35}~
If $G$ is a group with $|G| \in \{ 70, 105, 140 \}$, then $G$ has an element of order $35$.
\end{lemma}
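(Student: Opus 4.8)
The plan is to produce, in each case, a cyclic subgroup of order $35$, whose generators are then the desired elements of order $35$. The crucial point is to show that $G$ has a normal Sylow $5$-subgroup $P$ or a normal Sylow $7$-subgroup $Q$. Once one of these is normal, $PQ$ is a subgroup of $G$ of order $|P|\,|Q|/|P \cap Q| = 35$, and since $5 \nmid 7-1$, every group of order $35$ is cyclic; hence $PQ \cong C_{35}$ contains an element of order $35$, which lies in $G$.

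First I would record $70 = 2 \cdot 5 \cdot 7$, $\ 105 = 3 \cdot 5 \cdot 7$, and $140 = 2^2 \cdot 5 \cdot 7$, so that $35 \mid |G|$ in every case and the relevant Sylow subgroups have orders $5$ and $7$. Writing $n_5$ and $n_7$ for the numbers of Sylow $5$- and Sylow $7$-subgroups, Sylow's theorems give $n_5 \equiv 1 \pmod 5$ with $n_5 \mid |G|/5$, and $n_7 \equiv 1 \pmod 7$ with $n_7 \mid |G|/7$. For $|G| = 70$, the divisors of $14$ that are $\equiv 1 \pmod 5$ force $n_5 = 1$, and the divisors of $10$ that are $\equiv 1 \pmod 7$ force $n_7 = 1$; for $|G| = 140$, the divisors of $28$ and of $20$ likewise force $n_5 = n_7 = 1$. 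In these two cases both Sylow subgroups are already normal, so the construction above applies at once.

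The only case requiring a further argument is $|G| = 105$, where the congruences leave $n_5 \in \{1, 21\}$ and $n_7 \in \{1, 15\}$. Here I would rule out the simultaneous possibility $n_5 = 21$ and $n_7 = 15$ by counting elements of prime order: the $21$ Sylow $5$-subgroups pairwise intersect trivially and so contribute $21 \cdot 4 = 84$ elements of order $5$, while the $15$ Sylow $7$-subgroups contribute $15 \cdot 6 = 90$ elements of order $7$, and $84 + 90 = 174 > 105$ is impossible. Hence at least one of $n_5, n_7$ equals $1$, so $G$ again has a normal Sylow $5$- or Sylow $7$-subgroup, and the cyclic subgroup of order $35$ is obtained exactly as before.

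I expect the order-$105$ counting step to be the only genuine obstacle, since the orders $70$ and $140$ are pinned down by Sylow divisibility alone; the remaining input is the routine fact that a group of order $35$ is cyclic.
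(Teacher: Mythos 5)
Your proposal is correct and follows essentially the same route as the paper: reduce to finding a subgroup of order $35$ (automatically cyclic), get it as $PQ$ once a Sylow $5$- or Sylow $7$-subgroup is normal, with normality forced by Sylow's theorems for orders $70$ and $140$ and by the element-counting argument for order $105$. The paper leaves the Sylow computations and the counting for $105$ as "easy to see"; you have simply supplied those details.
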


\begin{proof}
Since every group of order 35 is cyclic, it suffices to show that $G$ has a subgroup of order 35.
We first suppose that $|G| = 70 = 2^{1} 5^{1} 7^{1}$ or $|G| = 140 = 2^{2} 5^{1} 7^{1}$.
Then, by Sylow's Theorem, it is easy to see that $G$ has a unique Sylow 5-subgroup $H$, and a Sylow 7-subgroup $K$.
Since $H$ intersects trivially with $K$, $HK$ is a subgroup of order 35.

If $|G| = 105 = 3^{1} 5^{1} 7^{1}$, then again by Sylow's Theorem, we infer that either a Sylow 5-subgroup or a Sylow 7-subgroup of $G$ is unique.
In either case, it follows by the same argument as above that $G$ must have a subgroup of order 35.
\end{proof}

\smallskip
\begin{lemma} \label{lem:subgroup}~
If a finite group $G$ has either
\begin{enumerate}
\item[(a)] a normal subgroup $N$ with $G/N$ abelian and $|G/N| \in \mathcal A \setminus \{ 25 \}$, or

\smallskip
\item[(b)] a proper subgroup $H$ of order $|H| \in \mathcal A$,
\end{enumerate}
where $\mathcal A = \{ 10, 14, 15, 20, 21, 25, 28, 30, 35, 36, 40, 42, 45, 70, 105, 140 \}$, then $\mathsf D (G) \ge 10$.
\end{lemma}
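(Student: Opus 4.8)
The plan is to reduce both parts to the tools already at hand: the monotonicity inequalities of Lemma~\ref{lem:ineqDC}, the abelian formula of Lemma~\ref{lem:abD}, the explicit table of Lemma~\ref{lem:dD}, and the order criteria of Lemmas~\ref{lem:order9} and \ref{lem:35}. The common ingredient is a statement about abelian groups that I would prove first: \emph{every abelian group $A$ with $|A| \in \mathcal A$ satisfies $\mathsf D (A) \ge 10$, with the single exception $A \cong C^{2}_5$, for which $\mathsf D (C^{2}_5) = 9$.} Indeed, inspecting the orders in $\mathcal A$ shows that the only abelian groups among them of exponent less than $10$ are $C_6 \times C_6$ and $C^{2}_5$; every other abelian group of order in $\mathcal A$ has exponent at least $10$, hence contains a cyclic subgroup of order $\ge 10$ and thus satisfies $\mathsf D (A) \ge 10$. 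For the two low-exponent cases, the rank formula of Lemma~\ref{lem:abD} gives $\mathsf D (C_6 \times C_6) = 11$ and $\mathsf D (C^{2}_5) = 9$, which establishes the claim.

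Granting this, part (a) is immediate. If $N \unlhd G$ with $G/N$ abelian and $|G/N| \in \mathcal A \setminus \{ 25 \}$, then the claim above gives $\mathsf D (G/N) \ge 10$, and Lemma~\ref{lem:ineqDC}.3 yields $\mathsf D (G) \ge \mathsf D (G/N) \ge 10$.

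For part (b), let $H \subsetneq G$ with $|H| \in \mathcal A$, and split according to the order. If $|H| \le 42$ and $H$ is abelian, then the claim above gives $\mathsf D (H) \ge 10$ unless $H \cong C^{2}_5$, so that $\mathsf D (G) \ge \mathsf D (H) \ge 10$; in the remaining case $H \cong C^{2}_5$, Lemma~\ref{lem:ineqDC}.1 supplies the missing increment, $\mathsf D (G) \ge \mathsf D (C^{2}_5) + 1 = 10$. If $|H| \le 42$ and $H$ is non-abelian, then I would read off from the table of Lemma~\ref{lem:dD} that every non-abelian group of order in $\mathcal A \cap [1,42]$ has $\mathsf D \ge 10$, whence $\mathsf D (G) \ge \mathsf D (H) \ge 10$. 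Finally, for the four large orders $45, 70, 105, 140$, I would exhibit an element of large order in $G$: any group of order $45 = 3^{2} \cdot 5$ is abelian and has exponent at least $15$, while Lemma~\ref{lem:35} furnishes an element of order $35$ in any group of order $70$, $105$, or $140$. In all four cases $G$ contains an element of order $\ge 10$, so $\mathsf D (G) \ge 10$ by Lemma~\ref{lem:order9}.

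There is no deep obstacle here; the work is a finite, organized verification against Lemmas~\ref{lem:abD} and \ref{lem:dD}. The one point demanding care is the order-$25$ case, which explains the asymmetry between (a) and (b): the abelian group $C^{2}_5$ has $\mathsf D = 9$, so as a \emph{subgroup} it reaches the bound $10$ only through the increment $\mathsf D (H) + 1 \le \mathsf D (G)$ of Lemma~\ref{lem:ineqDC}.1, which is available in (b); as a \emph{quotient} it supplies only $\mathsf D (G) \ge 9$, which is why order $25$ must be excluded from (a).
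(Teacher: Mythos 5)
Your proof is correct and follows essentially the same route as the paper's: reduce everything to the observation that the only abelian groups of order in $\mathcal A$ with exponent below $10$ are $C_6^2$ (with $\mathsf D = 11$) and $C_5^2$ (with $\mathsf D = 9$, whence the $+1$ increment of Lemma~\ref{lem:ineqDC}.1 is needed and $25$ must be excluded from part (a)), invoke Lemma~\ref{lem:35} for the orders $70, 105, 140$, and read the non-abelian cases of order at most $42$ off the table of Lemma~\ref{lem:dD}. Your explicit remark that every group of order $45$ is abelian is a small clarification the paper leaves implicit, but the substance is identical.
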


\begin{proof}
(a) Let $N$ be a normal subgroup of $G$ such that $G/N$ is abelian.
If $|G/N| \in \{ 10, 14, 15, 21, 30, 35, 42 \}$, then $G/N$ must be a cyclic group, and so $\mathsf D (G/N) = |G/N| \ge 10$.
If $|G/N| \in \{ 70, 105, 140 \}$, then Lemma~\ref{lem:35} ensures that $G/N$ has an element of order 35.
Moreover, if $|G/N| \in \{ 20, 28, 40, 45 \}$, then it is easy to see that $G/N$ has an element of order at least 10.
Thus, it follows by Lemma~\ref{lem:order9} that $\mathsf D (G/N) \ge 10$.
If $|G/N| = 36$, then $G/N \cong C_{36}$, or $C_2 \times C_{18}$, or $C_3 \times C_{12}$, or $C^{2}_6$.
If $G/N \cong C^{2}_6$, then Lemma~\ref{lem:abD}.(b) implies that $\mathsf D (G/N) = 11$.
If $G/N \ncong C^{2}_6$, then $G/N$ must have an element of order at least 10, and so Lemma~\ref{lem:order9} ensures that $\mathsf D (G/N) \ge 10$.
In either case, it follows by Lemma~\ref{lem:ineqDC}.3 that $\mathsf D (G) \ge \mathsf D (G/N) \ge 10$.

(b) Let $H$ be a proper subgroup of $G$. Note that $\mathsf D (G) \ge \mathsf D (H)$.
Suppose first that $|H| = 25$.
If $H \cong C_{25}$, then Lemma~\ref{lem:order9} ensures that $\mathsf D (G) \ge \mathsf D (H) \ge 10$.
If $H \cong C^{2}_5$, then Lemma~\ref{lem:abD}.(a) ensures that $\mathsf D (H) = 9$, whence it follows by Lemma~\ref{lem:ineqDC}.1 that $\mathsf D (G) \ge \mathsf D (H) + 1 = 10$.
If $|H| \in \{ 70, 105, 140 \}$, then Lemma~\ref{lem:35} implies that $\mathsf D (H) \ge 35$.
Suppose that $|H| \in \mathcal A \setminus \{ 25, 70, 105, 140 \}$.
If $H$ is abelian, then by replacing $G/N$ with $H$ in (a), we directly obtain that $\mathsf D (G) \ge \mathsf D (H) \ge 10$.
If $H$ is non-abelian, then it follows by Lemma~\ref{lem:dD} that $\mathsf D (G) \ge \mathsf D (H) \ge 10$.
\end{proof}

\smallskip
We are now ready to state the key lemma of this paper, and its proof relies heavily on the classical method for classifying finite groups and finite perfect groups.
Recall that a group is said to be {\it perfect} if it equal to its commutator subgroup.
In \cite{Sa81}, the author provides a complete list of perfect groups of order less than $10^{4}$, along with their structural information.
For the convenience of the reader, we provide the list of them describing some characteristic features of each of these groups.

\smallskip
\begin{lemma}[\cite{Sa81}] \label{lem:perfect}~
Let $G$ be a finite group of order less than $10^{4}$.
Then, $G$ is perfect if and only if $G$ has a proper normal subgroup $N$ such that $G/N$ is a non-abelian simple group, as described below:

\begingroup
\DefTblrTemplate{caption}{default}{}    
\DefTblrTemplate{capcont}{default}{}    
\DefTblrTemplate{contfoot}{default}{}   
\begin{longtblr}{
  colspec = {|X[1.8, c]|X[1.2, c]|X[0.6, c]||X[1.6, c]|X[1.2, c]|X[0.6, c]|},
  rowhead = 1,
  hlines,
  row{even} = {gray!15},
}
\rule{0pt}{11pt} $N$ & $G/N$ & $|G|$ & $N$ & $G/N$ & $|G|$ \\
$\langle 1_G \rangle$   & $A_5$ & $60$ &
    $C^{4}_2 \times C_4$ & $A_5$ & $3840$ \\
$C_2$   & $A_5$ & $120$ &
    $C_2 \times \big( (C_2 \times Q_8) \rtimes C_2 \big)$ & $A_5$ & $3840$ \\
$\langle 1_G \rangle$   & $\PSL (2,7)$ & $168$ &
    $P_1$ & $A_5$ & $3840$ \\
$C_2$  & $\PSL (2,7)$ & $336$ &
    $\langle 1_G \rangle$  & $\PSL (2,16)$ & $4080$ \\
$\langle 1_G \rangle$  & $A_6$ & $360$ &
    $C^{4}_3$ & $A_5$ & $4860$ \\
$\langle 1_G \rangle$  & $\PSL (2,8)$ & $504$ &
    $C_2$  & $\PSL (2,17)$ & $4896$ \\
$\langle 1_G \rangle$  & $\PSL (2,11)$ & $660$ &
    $C_2$  & $A_7$ & $5040$ \\
$C_2$  & $A_6$ & $720$ &
    $C^{5}_2$ & $\PSL (2,7)$ & $5376$ \\
$C^{4}_2$ & $A_5$ & $960$ &
    $\langle 1_G \rangle$  & $\PSL (3,3)$ & $5616$ \\
$C_3$  & $A_6$ & $1080$ &
    $C^{4}_2$ & $A_6$ & $5760$ \\
$\langle 1_G \rangle$  & $\PSL (2,13)$ & $1092$ &
    $\langle 1_G \rangle$  & $\PSU (3,9)$ & $6048$ \\
$C_2$  & $\PSL (2,11)$ & $1320$ &
    $\langle 1_G \rangle$  & $\PSL (2,23)$ & $6072$ \\
$C^{3}_2$ & $\PSL (2,7)$ & $1344$ &
    $C_2$  & $\PSL (2,19)$ & $6840$ \\
$C^{5}_2$ & $A_5$ & $1920$ &
    $\SL (2,5)$ & $A_5$ & $7200$ \\
$(C_2 \times Q_8) \rtimes C_2$ & $A_5$ & $1920$ &
    $C^{3}_5$ & $A_5$ & $7500$ \\
$C_6$  & $A_6$ & $2160$ &
    $C_3$  & $A_7$ & $7560$ \\
$C_2$  & $\PSL (2,13)$ & $2184$ &
    $C^{7}_2$ & $A_5$ & $7680$ \\
$\langle 1_G \rangle$  & $\PSL (2,17)$ & $2448$ &
    $C_2 \times P_1$ & $A_5$ & $7680$ \\
$\langle 1_G \rangle$  & $A_7$ & $2520$ &
    $\langle 1_G \rangle$  & $\PSL (2,25)$ & $7800$ \\
$C^{4}_2$ & $\PSL (2,7)$ & $2688$ &
    $\langle 1_G \rangle$ & $\M_{11}$ & $7920$ \\
$C^{2}_5 \rtimes C_2$ & $A_5$ & $3000$ &
    $C^{3}_3 \times C_6$ & $A_5$ & $9720$ \\
$\langle 1_G \rangle$  & $\PSL (2,19)$ & $3420$ &
    $C^{4}_3 \rtimes C_2$ & $A_5$ & $9720$ \\
$A_5$ & $A_5$ & $3600$ &
    $\langle 1_G \rangle$ & $\PSL (2,27)$ & $9828$ \\
$C^{6}_2$ & $A_5$ & $3840$ &
    \\
\end{longtblr}
\endgroup
\noindent
where $\PSU (n,q)$ is a projective special unitary group over a finite field of order $q$, $\M_{11}$ is a Mathieu group, and $P_1 = \langle a, b, c, d, e \mid a^{2} = b^{2} = c^{2} = d^{2} = e^{4} = 1_G, \textnormal{ and } ab = ba, ac = ca, cd = dc, ae = ea, be = eb, ce = ec, de = ed, dad = e^{2}d, bcb = e^{2}c, dbd = e^{2}b \rangle$.
\end{lemma}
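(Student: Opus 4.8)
The plan is to separate the claim into its elementary skeleton and the substantive enumeration, the latter being precisely the classification established in \cite{Sa81}. First I would dispose of the forward implication by a standard maximal-normal-subgroup argument: if $G$ is perfect, choose a normal subgroup $N$ that is maximal among proper normal subgroups, so that $S := G/N$ is simple. Were $S$ abelian, it would be cyclic of prime order, and then $G \twoheadrightarrow S$ would be a non-trivial abelian quotient, forcing $G' \subsetneq G$ and contradicting perfectness; hence $S$ is non-abelian simple. This shows that every perfect $G$ sits atop a non-abelian simple quotient, which is the qualitative shape recorded in the table.

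The real content is the converse together with \emph{completeness}: that, among groups of order below $10^{4}$, the perfect ones are precisely the finitely many groups displayed, each with the indicated pair $(N, G/N)$. (Note that the bare existence of a non-abelian simple quotient does not by itself force perfectness --- for instance $C_2 \times A_5$ has quotient $A_5$ but abelianization $C_2$ --- so the equivalence must be read as the assertion that the listed rows exhaust the perfect groups in this range.) I would organise the enumeration by the simple quotient $S$: first list all non-abelian simple groups $S$ with $|S| < 10^{4}$, which by the order formulas for the alternating series, the $\PSL (2,q)$ and $\PSL (3,3)$ groups, $\PSU (3,9)$, and the sporadic $\M_{11}$ yields exactly the groups appearing in the right-hand entries. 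For each such $S$ one then classifies, layer by layer, the perfect groups $G$ with $|G| < 10^{4}$ admitting $S$ as a simple quotient, by analysing the extensions $1 \to N \to G \to S \to 1$: central extensions are controlled by the Schur multiplier $H_2(S)$ (producing the quasisimple covers such as $\SL (2,5) \twoheadrightarrow A_5$), while extensions by a non-trivial irreducible $\F_p S$-module $M$ are controlled by $H^1(S,M)$ and $H^2(S,M)$ (producing the entries of the form $C_p^{\,k} \rtimes S$ and their iterates). Perfectness is then imposed as the requirement that the resulting $G$ equal its own commutator subgroup, which at each stage retains only the relevant extension classes.

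The main obstacle is precisely this completeness statement: verifying that the enumeration terminates exactly at the displayed list requires the full module-theoretic and cohomological case analysis for each admissible $S$, together with the constraint $|N|\,|S| < 10^{4}$, and this is the hard computational heart of \cite{Sa81} (and is independently reproducible through the perfect-groups library of $\GAP$). Since the present paper needs this classification only as an input --- to locate, inside a perfect $G$, a concrete subgroup of order at most $42$ with sufficiently large Davenport constant --- I would not reprove it here but invoke \cite{Sa81} directly, having recorded above the one elementary implication that motivates the tabular form.
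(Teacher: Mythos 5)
Your proposal is correct and matches the paper's treatment: the paper offers no independent proof of this lemma but simply imports the classification from \cite{Sa81} (stating the table for the reader's convenience), which is exactly what you do after recording the elementary ``perfect $\Rightarrow$ non-abelian simple quotient'' direction. Your observation that the literal ``if and only if'' must be read as ``the listed rows exhaust the perfect groups in this range'' (since, e.g., $C_2 \times A_5$ has $A_5$ as a quotient without being perfect) is a correct and worthwhile clarification of how the lemma is intended.
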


\smallskip
Based on this information, if our group is perfect, then we find a specific subgroup with a sufficiently large Davenport constant.
Otherwise, we construct the group using the classical method for the semidirect product, and we present concrete group generators and their relations.
We then find either a subgroup as in the perfect case, or a minimal product-one sequence of sufficiently large length.
Recall by Lemma~\ref{lem:dD} that $\mathsf D (G) \ge 10$ for a non-abelian group $G$, that is not isomorphic to the groups listed in the tables of the main Theorem~\ref{thm:classify}, with $|G| \le 42$.

\smallskip
\begin{lemma} \label{lem:DC}~
Let $G$ is a finite non-abelian group with $|G| > 42$.
\begin{enumerate}
\item If $G$ has a proper subgroup of order $32$, then $\mathsf D (G) \ge 8$.

\smallskip
\item If $G$ has no proper subgroup of order $32$, then $\mathsf D (G) \ge 10$.
\end{enumerate}
\end{lemma}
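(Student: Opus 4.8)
The plan is to treat both parts by the same mechanism: locate inside $G$ either a subgroup, an abelian quotient, or an explicit sequence to which one of the earlier lemmas applies. For part~1, let $H \subset G$ be a proper subgroup with $|H| = 32$. If $H$ is non-abelian, then Lemma~\ref{lem:2group}.1 already gives $\mathsf D (H) \ge 8$, hence $\mathsf D (G) \ge \mathsf D (H) \ge 8$. If $H$ is abelian it is one of the groups in \eqref{eq:32}, and a direct computation from Lemma~\ref{lem:abD}.(a) shows $\mathsf D (H) \ge 7$ in every case except $H \cong C_2^5$, where $\mathsf D (H) = 6$; since $H$ is abelian and proper, Lemma~\ref{lem:ineqDC}.1 yields $\mathsf D (G) \ge \mathsf D (H) + 1 \ge 8$ whenever $H \not\cong C_2^5$. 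The remaining subcase $H \cong C_2^5$ I would handle by passing to a Sylow $2$-subgroup $P \supseteq H$: if $|P| \ge 64$ then $P$ forces $\mathsf D (G) \ge 8$ by Lemma~\ref{lem:2group}.2 (when $P$ is non-abelian) or by Lemmas~\ref{lem:abD}.(a) and \ref{lem:ineqDC}.1 (when $P$ is abelian); and if $|P| = 32$ then $P = C_2^5$ is the full Sylow $2$-subgroup and $|G| = 32m$ with $m$ odd and $m \ge 3$, so inspecting the action of an element of odd prime order $q \mid m$ on $C_2^5$ produces either a centralized $C_2^2$ (hence an abelian subgroup $C_2^2 \times C_q$ with $\mathsf D \ge 7$) or an $A_4$-type section giving a subgroup in Lemma~\ref{lem:dD} with $\mathsf D \ge 8$.

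For part~2 the decisive preliminary observation is that the hypothesis is really a constraint on the $2$-part of $|G|$: a $2$-group of order at least $2^5$ has a subgroup of every order dividing its order, so if the Sylow $2$-subgroup had order $\ge 2^5$ it would contain a subgroup of order $32$, which is proper in $G$ since $|G| > 42$. Hence the absence of a proper subgroup of order $32$ forces the Sylow $2$-subgroup to have order at most $16$. I would then split on whether $G$ is perfect. If $G$ is not perfect, then $G' \ne G$ and $G/G'$ is a non-trivial abelian group, so by Lemma~\ref{lem:ineqDC}.3 we have $\mathsf D (G/G') \le \mathsf D (G)$ and are done unless $G/G'$ is one of the few abelian groups with $\mathsf D \le 9$ recorded in Theorem~\ref{thm:classify}. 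In that case every abelian quotient of $G$ is small, and I would exploit the extension $1 \to G' \to G \to G/G' \to 1$: using Schur--Zassenhaus and Hall's theorem to write $G$ (or suitable subgroups) as semidirect products, one reads off explicit generators and relations and then exhibits either a proper subgroup of order in $\mathcal A$ (Lemma~\ref{lem:subgroup}.(b)), or an abelian quotient of order in $\mathcal A \setminus \{25\}$ (Lemma~\ref{lem:subgroup}.(a)), or an element of order $\ge 10$ or an order-$9$ element lying in a proper cyclic subgroup (Lemma~\ref{lem:order9}), or else a minimal product-one sequence of length at least $10$ built directly from the relations. The bound on the $2$-part keeps the list of admissible orders and semidirect structures finite and manageable.

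If $G$ is perfect, it is non-solvable, and I would invoke Lemma~\ref{lem:perfect}. For $|G| < 10^4$ the group occurs in that table together with a normal subgroup $N$ and non-abelian simple quotient $G/N$; working inside the associated simple group one finds in each case an element of order $\ge 10$ or a subgroup of order in $\mathcal A$ — for instance a dihedral subgroup $D_{10}$ inside $A_5$, a Frobenius subgroup $C_7 \rtimes C_3$ of order $21$ inside $\PSL (2,7)$, an element of order $9$ inside $\PSL (2,8)$, or an element of order $10$ inside $\SL (2,5)$ — yielding a subgroup of order at most $42$ with $\mathsf D \ge 10$. Perfect groups of order $\ge 10^4$ whose Sylow $2$-subgroup still has order $\le 16$ have a non-abelian simple composition factor of $\PSL (2,q)$-type with $q$ large, and these I would treat uniformly through the generic subgroup structure of $\PSL (2,q)$ (cyclic subgroups of order $(q \pm 1)/\gcd(2, q-1)$ and dihedral subgroups of order $q \pm 1$), which forces an element of order $\ge 10$ or a subgroup of order in $\mathcal A$.

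The hard part will be the non-perfect case with $G/G'$ small: there the conclusion is not forced by a single invariant, and one must run through the possible semidirect-product structures compatible with a Sylow $2$-subgroup of order at most $16$, producing in each branch the required subgroup or explicit long product-one sequence. A secondary difficulty is making the perfect case uniform for arbitrarily large order, namely guaranteeing that the relevant simple subquotient is realized by an honest subgroup of $G$ carrying a subgroup of order in $\mathcal A$ rather than appearing only as a quotient; it is precisely the bound on the $2$-part that eliminates the generic large simple groups and confines attention to the $\PSL (2,q)$ families handled above.
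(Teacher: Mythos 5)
Your overall architecture (subgroup/quotient monotonicity, the perfect-group classification, semidirect-product analysis in the non-perfect case) matches the paper's, but two of your steps have genuine gaps. In part 1, the subcase where the Sylow $2$-subgroup is exactly $H \cong C_2^{5}$ is not handled correctly: an element $g$ of odd prime order $q$ need not normalize $H$, so "the action of $g$ on $C_2^{5}$" is not defined without further argument; even granting an action, the fixed subspace of an order-$3$ element on $\F_2^{5}$ can be $1$-dimensional, so no centralized $C_2^{2}$ is guaranteed; and your fallback "$A_4$-type section giving $\mathsf D \ge 8$" is numerically false, since $\mathsf D (A_4) = 7$ (and Lemma~\ref{lem:ineqDC}.1 applied to the non-abelian subgroup $A_4$ gives only $\mathsf d (A_4) + 2 = 6$). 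The paper avoids all of this by writing down an explicit minimal product-one sequence $e_1 \bdot \ldots \bdot e_5 \bdot (e_1 \cdots e_5 g) \bdot g^{[\ord (g) - 1]}$ of length $6 + (\ord (g) - 1) \ge 8$, using only that $g \notin H$ has order $\ge 3$.

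In part 2 the decisive reduction is missing. Bounding the Sylow $2$-subgroup by $16$ does \emph{not} make "the list of admissible orders and semidirect structures finite": there are infinitely many groups of odd order, and infinitely many non-abelian $3$-groups of exponent $3$, none of which contain a subgroup of order $32$. The step you need (and which the paper makes first) is: if some element has order $\ge 9$ then Lemma~\ref{lem:order9} finishes, so one may assume $\ord (g) \in [2,8]$ for all $g$; this forces $|G| = 2^{i} 3^{j} 5^{k} 7^{\ell}$, and together with subgroup monotonicity for large $j, k, \ell$ and the Sylow-$2$ bound one gets $|G| \le 2^{4} 3^{3} 5 \cdot 7 = 15120$, which is what renders the case analysis finite. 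Relatedly, your treatment of perfect groups of order $\ge 10^{4}$ is unsound: such a group with small Sylow $2$-subgroup need not have a $\PSL (2,q)$ composition factor with $q$ large — it can be, e.g., an extension of a large solvable (elementary abelian) normal subgroup by $A_5$ or $A_7$, exactly as in the order-$15120$ entry the paper handles separately via the Schur cover of $A_7$. Once the order bound $|G| \le 15120$ is in place, only that one order exceeds $10^{4}$ and the generic $\PSL (2,q)$ discussion becomes unnecessary.
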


\begin{proof}
Let $G$ be a finite non-abelian group with $|G| > 42$.

1. Suppose that $G$ has a proper subgroup $H$ with $|H| = 32$.
If $H$ is a proper subgroup of a Sylow $2$-subgroup $P$ of $G$, then $P$ has the order at least 64, and we may assume that $|P| = 64$.
If $P$ is non-abelian, then Lemma~\ref{lem:2group}.2 ensures that $\mathsf D (G) \ge \mathsf D (P) \ge 8$.
If $P$ is abelian, then by Lemma~\ref{lem:abD}.(a) together with the structure theorem for finite abelian group, we obtain that $\mathsf D (G) \ge \mathsf D (P) \ge 8$ if $P \ncong C^{6}_2$.
If $P \cong C^{6}_2$, then $\mathsf D (G) \ge \mathsf D (P) + 1 = 8$ by Lemma~\ref{lem:ineqDC}.1.

Suppose now that $H$ is a Sylow $2$-subgroup of $G$.
If $H$ is non-abelian, then Lemma~\ref{lem:2group}.1 ensures that $\mathsf D (G) \ge \mathsf D (H) \ge 8$.
Suppose that $H$ is abelain, so that $H$ is isomorphic to one of the groups listed in (\ref{eq:32}).
If $H$ is not isomorphic to $\{ C^{3}_2 \times C_4, C^{5}_2 \}$, then by Lemma~\ref{lem:abD}.(a), we obtain that $\mathsf D (G) \ge \mathsf D (H) \ge 8$.
If $H \cong C^{3}_2 \times C_4$, then by Lemma~\ref{lem:ineqDC}.1, we obtain that $\mathsf D (G) \ge \mathsf D (H) + 1 = 8$.
If $H \cong C^{5}_2$, then  since $H$ is proper, there exists an element $g \in G \setminus H$.
Since $H$ is a Sylow $2$-subgroup of $G$, it follows that $\ord (g)$ is not a power of 2, i.e., $\ord (g) \ge 3$.
Let $\{ e_1, e_2, e_3, e_4, e_5 \}$ be a minimal generating set of $H$.
Then, it is easy to see that the sequence
\[
  S = e_1 \bdot e_2 \bdot e_3 \bdot e_4 \bdot e_5 \bdot (e_1 e_2 e_3 e_4 e_5 g) \bdot g^{[\ord (g) - 1]} \in \mathcal F (G)
\]
is a minimal product-one sequence over $G$, and thus $\mathsf D (G) \ge |S| = 6 + (\ord (g) - 1) \ge 8$.

\medskip
2. Suppose that $G$ has no proper subgroup of order 32.
If $G$ has an element $g$ with $\ord (g) \ge 9$, then Lemma~\ref{lem:order9} ensures that $\mathsf D (G) \ge 10$.
Thus, we now suppose that
\begin{equation} \label{eq:order}~
  \ord (g) \in [2,8] \quad \mbox{ for all } \,\, g \in G \setminus \{ 1_G \} \,,
\end{equation}
and since $G$ is non-abelian, there must be at least one element of order greater than 2.

We first handle the case when $G$ is a $p$-group for a prime $p$.
Then, $G$ has a normal subgroup of each order dividing $|G|$, and since $G$ is not a 2-group, we infer that $p \in \{ 3, 5, 7 \}$.

If $p = 3$, then $|G| = 3^{n}$ with $n \ge 4$, and it follows by (\ref{eq:order}) that $\exp (G) = \lcm \{ \ord (g) \mid g \in G \} = 3$. Since $\mathsf D (G) \ge \mathsf D (H)$ for any subgroup $H$, it suffices to show that $\mathsf D (G) \ge 10$ for the case where $n = 4$.
Now, let $|G| = 81$ and $K \unlhd G$ with $|K| = 9$.
Since $K$ is abelian, we must have that $K \cong C_3 \times C_3$.
Then, $G/K$ acts on $K$ by conjugation via $gK \bdot x := gxg^{-1}$, and this action induces a group homomorphsm $\varphi \colon G/K \to \Aut (K)$ given by $gK \mapsto \tau_{gK}$, where $\tau_{gK} \colon K \to K$ is an automorphism defined by $\tau_{gK} (x) = gK \bdot x = gxg^{-1}$.
Hence, $\ker (\varphi) = \{ gK \mid gxg^{-1} = x$ for all $x \in K \} = \{ gK \mid g \in C_{G} (K) \} = C_G (K)/K$.
Since $\Aut (K) \cong \GL (2,3)$ and $|\GL (2,3)| = 2^{4} 3$, it follows that $\ker (\varphi)$ must be non-trivial, whence $K$ must be a proper subgroup of $C_G (K)$.
This means that there exists an element $a \in C_G (K) \setminus K$, and thus $N := K \langle a \rangle$ is an abelian normal subgroup of $G$ of order 27.
By (\ref{eq:order}), we infer that $N \cong C^{3}_3$.
Let $\alpha \in G \setminus N$.
Then, $H = \langle \alpha \rangle$ is a subgroup of order 3, and it follows that $H \cap N = \{ 1_G \}$, whence $G \cong N \rtimes H$.
Note that $\Aut (N) \cong \GL (3,3)$ (see \cite[Proposition 4.17]{Du-Fo04}) and $|\GL (3,3)| = 2^{5}3^{3}13$.
Let $A \in \GL (3,3)$ be such that $A^{3} = I$.
Then, the minimal polynomial of $A$ divides $x^{3}-1$, and $x^{3}-1 = (x-1)^{3} \in \mathbb Z_3 [x]$.
Under the constraints of the rational canonical form, we obtain the following permissible lists of invariant factors:
\begin{enumerate}
\item[(i)] $(x+2)$, $(x+2)$, $(x+2)$

\item[(ii)] $(x+2)$, $(x+2)^{2}$

\item[(iii)] $(x+2)^{3}$
\end{enumerate}
Then, (i) corresponds to the identity matrix in $\GL (3,3)$, and (ii) (resp., (iii)) corresponds to the matrix $B = \begin{bmatrix} 1 & & \\ & 0 & 2 \\ & 1 & 2 \end{bmatrix} \Bigg($ resp., $C = \begin{bmatrix} 0 & 0 & 1 \\ 1 & 0 & 0 \\ 0 & 1 & 0 \end{bmatrix} \Bigg)$.
Let $N \cong C^{3}_3 \cong \langle a \rangle \times \langle b \rangle \times \langle c \rangle$, $H \cong C_3 \cong \langle d \rangle$, and $\varphi \colon H \to \Aut (N)$ be a homomorphism induced from the action of $H$ on $N$ by conjugation.
Since $G$ is non-abelian, it follows that $\varphi [H] \le \Aut (N)$ is a non-trivial subgroup of order 3.

If $\varphi [H] = \langle C \rangle$, then we infer that $G \cong \langle a, b, c, d \mid a^{3} = b^{3} = c^{3} = d^{3} = 1_G, \mbox{ and } ab = ba, ac = ca, bc = cb, dad^{-1} = c, dbd^{-1} = a, dcd^{-1} = b \rangle$.
Then, it is easy to see that $\ord (cd) = 9$, a contradiction to (\ref{eq:order}).

If $\varphi [H] = \langle B \rangle$, then we infer that $G \cong \langle a, b, c, d \mid a^{3} = b^{3} = c^{3} = d^{3} = 1_G, \mbox{ and } ab = ba, ac = ca, ad = da, bc = cb, dbd^{-1} = c^{-1}, dcd^{-1} = bc^{-1} \rangle$.
Let $x = b^{-1}c^{-1}$. Then $\ord (x) = 3$, $xc = cx$, $xd = dx$, and $dcd^{-1} = cx^{-1}$.
Thus, we obtain that $H_{27} \cong \langle x, c, d \rangle$, and since $a \in Z(G)$, it follows that $G \cong \langle a \rangle \times \langle x, c, d \rangle \cong C_3 \times H_{27}$.
Now, we consider the sequence
\[
  S = (ac) \bdot c^{[2]} \bdot d^{[3]} \bdot x^{[2]} \bdot a^{[2]} \in \mathcal F (G) \,.
\]
Note that $1_G = a a (ac) d c c d d x x$, so that $S$ is a product-one sequence over $G$ of length 10.
To show that $S$ is minimal, we assume to the contrary that $S = S_1 \bdot S_2$ for some non-trivial $S_1, S_2 \in \mathcal B (G)$ with $a \mid S_1$.
Since $a$ is independent with $c, d, x$ in $G$ and $\ord (a) = 3$, we infer that $(ac) \bdot a^{[2]} \mid S_1$, so that $S_2 \mid c^{[2]} \bdot d^{[3]} \bdot x^{[2]}$.
Since $a \in Z (G)$, it follows that $T_1 := \big( S_1 \bdot a^{[-2]} \bdot (ac)^{[-1]} \big) \bdot c$ must be a product-one sequence, and hence we obtain that $c^{[3]} \bdot d^{[3]} \bdot x^{[2]} = T_1 \bdot S_2$, contradicting that $c^{[3]} \bdot d^{[3]} \bdot x^{[2]}$ is a minimal product-one sequence over $H_{27}$ (see the proof of \cite[Proposition 5.5]{Cz-Do-Sz18}).
Therefore, $S$ must be a minimal product-one sequence, whence $\mathsf D (G) \ge |S| = 10$.

If $p = 5$, then $|G| = 5^{n}$ with $n \ge 3$, and so $G$ has a proper subgroup of order 25.
Thus, it follows by Lemma~\ref{lem:subgroup} that $\mathsf D (G) \ge 10$.

If $p = 7$, then $|G| = 7^{n}$ with $n \ge 2$, and the same argument as used before ensures that it suffices to show that $\mathsf D (G) \ge 10$ for the case where $n = 2$.
Then, $G$ must be abelian, and it follows by (\ref{eq:order}) that $G \cong C^{2}_7$, whence $\mathsf D (G) = 13$ by Lemma~\ref{lem:abD}.(a).

Hence, we may assume that $G$ is not a $p$-group, so that $G$ is not a prime power order and $|G| = 2^{i} 3^{j} 5^{k} 7^{\ell}$ for some $i, j, k, \ell \in \mathbb N_0$.
If $j \ge 4$ (resp, $k \ge 3$, or $\ell \ge 2$), then $G$ has a proper subgroup of order $3^{4}$ (resp, $5^{3}$, or $7^{2}$), and hence $\mathsf D (G) \ge 10$ as described above.
Moreover, if $k = 2$, then since $|G| > 42$, there exists at least one positive index among $i, j, \ell$.
By Sylow's Theorem, $G$ has a proper Sylow 5-subgroup of order 25, and so Lemma~\ref{lem:subgroup} ensures that $\mathsf D (G) \ge 10$.
Therefore, since $G$ has no proper subgroup of order 32, we suppose now that
\[
  |G| = 2^{i} 3^{j} 5^{k} 7^{\ell} \quad \mbox{ for } \,\, i \in [0,4], \,\, j \in [0,3], \,\, k \in [0,1], \,\, \ell \in [0,1] \,.
\]
We distinguish 15 cases depending on $i$, $j$, $k$, and $\ell$.

\smallskip
\noindent
{\bf CASE 1.} $j = 1$ and $k = \ell = 0$, i.e., $|G| = 2^{i} 3^{1}$.
\smallskip

Then, since $|G| > 42$, we only have the case where $i = 4$, and so $|G| = 2^{4} 3^{1} = 48$.
It is an easy consequence of Sylow's Theorem that $G$ has a non-trivial normal subgroup $H$ of order 8 or 16.
Let $K$ be a Sylow $3$-subgroup of $G$.

Suppose that $|H| = 8$.
Since $|K| = 3$, it follows that $H$ intersects trivially with $K$.
Thus, $HK \le G$ is a subgroup of order $|HK| = \frac{|H| |K|}{|H \cap K|} = 24$, and since $(G : HK) = 2$, we infer that $HK$ is a normal subgroup of $G$.
If $HK$ is non-abelian, then by Lemma~\ref{lem:dD}, $\mathsf D (G) \ge \mathsf D (HK) \ge 10$.
If $HK$ is abelian, then in view of (\ref{eq:order}), we must have that $HK \cong C^{3}_2 \times C_3$.
Then, there exists an element $e \in G \setminus HK$ with $\ord (e) = 2$, and so it is clear that $HK$ intersects trivially with $\langle e \rangle$, whence $G \cong HK \rtimes \langle e \rangle$.
Note that $\Aut (HK) \cong \Aut (C^{3}_2) \times \Aut (C_3) \cong \GL (3,2) \times \Aut (C_3)$, so that $|\Aut (HK)| = 2^{5} 3^{1} 7^{1}$.
Let $A \in \GL (3,2)$ be such that $A^{2} = I$.
Then, the minimal polynomial of $A$ divides $x^{2} - 1$, and $x^{2} - 1 = (x-1)^{2} \in \mathbb Z_2 [x]$.
Under the constraints of the rational canonical form, we obtain the following permissible lists of invariant factors:
\begin{enumerate}
\item[(i)] $(x-1)$, $(x-1)$, $(x-1)$

\smallskip
\item[(ii)] $(x-1)$, $(x-1)^{2}$
\end{enumerate}
Then, (i) corresponds to the identity matrix in $\GL (3,2)$, and (ii) corresponds to the matrix $B = \begin{bmatrix} 1 & & \\ & 0 & 1 \\ & 1 & 0 \end{bmatrix}$.
Let $HK \cong C^{3}_2 \times C_3 \cong (\langle a \rangle \times \langle b \rangle \times \langle c \rangle) \times \langle d \rangle$, and $\varphi \colon \langle e \rangle \to \Aut (HK)$ be a homomorphism induced from the action of $\langle e \rangle$ on $HK$ by conjugation.
Since $G$ is non-abelian, it follows that $\varphi [\langle e \rangle] \le \Aut (HK)$ is a non-trivial subgroup of order 2.

If $\varphi [\langle e \rangle] \cong \{ I \} \times \Aut (C_3)$, then $G \cong \langle a, b, c, d, e \mid a^{2} = b^{2} = c^{2} = d^{3} = e^{2} = 1_G, \mbox{ and } ab = ba, ac = ca, ad = da, bc = cb, bd = db, cd = dc, ea = ae, eb = be, ec = ce, ede = d^{2} \rangle$.
Let $x = cd \in G$. Then, $\ord (x) = 6$ and $exe = x^{-1}$, whence $L \cong \langle b, x, e \mid b^{2} = x^{6} = e^{2} = 1_G, \mbox{ and } bx = xb, be = eb, exe = x^{-1} \rangle \cong C_2 \times D_{12}$ is a subgroup of $G$ of order 24.
It follows by Lemma~\ref{lem:dD} that $\mathsf D (G) \ge \mathsf D (L) = 10$.

If $\varphi [\langle e \rangle] \cong \langle B \rangle \times \{ \id \}$, where $\id$ is the identity automorphism of $C_3$, then $G \cong \langle a, b, c, d, e \mid a^{2} = b^{2} = c^{2} = d^{3} = e^{2} = 1_G, \mbox{ and } ab = ba, ac = ca, ad = da, bc = cb, bd = db, cd = dc, ea = ae, ebe = c, ece = b, ed = de \rangle$.
Then, it is easy to see that $\ord (cde) = 12$, a contradiction to (\ref{eq:order}).

If $\varphi [\langle e \rangle] \cong \langle (B, f) \rangle$, where $f$ is an automorphism of $C_3$ given by $f (d) = d^{2}$, then $G \cong \langle a, b, c, d, e \mid a^{2} = b^{2} = c^{2} = d^{3} = e^{2} = 1_G, \mbox{ and } ab = ba, ac = ca, ad = da, bc = cb, bd = db, cd = dc, ea = ae, ebe = c, ece = b, ede = d^{2} \rangle$.
Let $x = bc$ and $y = ad$. Then, $\ord (x) = 2$, $\ord (y) = 6$, $xy = yx$, $xe = ex$, and $eye = y^{-1}$, whence $L \cong \langle x, y, e \rangle \cong C_2 \times D_{12}$ is a subgroup of $G$ of order 24.
It follows by Lemma~\ref{lem:dD} that $\mathsf D (G) \ge \mathsf D (L) = 10$.

Suppose now that $|H| = 16$.
Since $H$ intersects trivially with $K$, it follows that $G \cong H \rtimes K$.
If $H \cong C_2 \times C_8$, then Lemmas~\ref{lem:ineqDC}.1 and \ref{lem:abD}.(a) ensure that $\mathsf D (G) \ge \mathsf D (H) + 1 = 10$.
If $H \in \{ M_{16}, D_{16}, SD_{16}, Q_{16} \}$, then it follows by Lemma~\ref{lem:dD} that $\mathsf D (G) \ge \mathsf D (H) \ge 10$.
We continue the remaining cases for $H$.

\smallskip
\noindent
{\bf SUBCASE 1.1.} $H \cong C^{4}_2$.
\smallskip

Then, $\Aut (H) \cong \GL (4,2)$, and so $|\Aut (H)| = 2^{6} 3^{2} 5^{1} 7^{1}$.
Let $A \in \GL(4,2)$ be such that $A^{3} = I$.
Then, the minimal polynomial of $A$ divides $x^{3}-1$, and $x^{3}-1 = (x-1)(x^{2} + x + 1) \in \mathbb Z_2 [x]$.
Under the constraints of the rational canonical form, we obtain the following permissible lists of invariant factors:
\begin{enumerate}
\item[(i)] $(x-1)$, $(x-1)$, $(x-1)$, $(x-1)$

\item[(ii)] $(x-1)$, $(x-1)(x^{2} + x + 1)$

\item[(iii)] $(x^{2} + x + 1)$, $(x^{2} + x + 1)$
\end{enumerate}
Then, (i) corresponds to the identity matrix in $\GL(4,2)$, and (ii) (resp., (iii)) corresponds to the matrix $B = \begin{bmatrix} 1 & & & \\ & 0 & 0 & 1 \\ & 1 & 0 & 0 \\ & 0 & 1 & 0 \end{bmatrix}$ $\Bigg($ resp., $C = \begin{bmatrix} 0 & 1 & & \\ 1 & 1 & & \\ & & 0 & 1 \\ & & 1 & 1 \end{bmatrix} \Bigg)$.
Let $H \cong C^{4}_2 \cong \langle a \rangle \times \langle b \rangle \times \langle c \rangle \times \langle d \rangle$, $K \cong C_3 \cong \langle e \rangle$, and $\varphi \colon K \to \Aut (H)$ be a homomorphism induced from the action of $K$ on $H$ by conjugation.
Since $G$ is non-abelian, $\varphi [K] \le \Aut (H)$ is a non-trivial subgroup of order 3, and thus either $\varphi [K] = \langle B \rangle$ or $\varphi [K] = \langle C \rangle$.

If $\varphi [K] = \langle B \rangle$, then $G \cong \langle a, b, c, d, e \mid a^{2} = b^{2} = c^{2} = d^{2} = e^{3} = 1_G, \mbox{ and } ab = ba, ac = ca, ad = da, bc = cb, bd = db, cd = dc, ea = ae, ebe^{-1} = d, ece^{-1} = b, \, ede^{-1} = c \rangle$.
Let $x = bcd$, $y = cd$, and $z = bd$.
Then, $G$ has a subgroup $L \cong \langle x, y, z, e \mid x^{2} = y^{2} = z^{2} = e^{3} = 1_G, \mbox{ and } xy = yx, xz = zx, yz = zy, ex = xe, eye^{-1} = yz, eze^{-1} = y \rangle \cong C_2 \times A_4$ of order 24, and hence it follows by Lemma~\ref{lem:dD} that $\mathsf D (G) \ge \mathsf D (L) = 10$.

If $\varphi [K] = \langle C \rangle$, then $G = \langle a, b, c, d, e \mid a^{2} = b^{2} = c^{2} = d^{2} = e^{3} = 1_G, \mbox{ and } ab = ba, ac = ca, ad = da, bc = cb, bd = db, cd = dc, eae^{-1} = b, ebe^{-1} = ab, ece^{-1} = d, ede^{-1} = cd \rangle$.
Observe that $b \notin \langle c, d, e \rangle$ and $d \notin \langle a, b, e \rangle$.
Now, we consider the sequence
\[
  S = b^{[3]} \bdot d^{[3]} \bdot e^{[2]} \bdot (e^{-1})^{[2]}  \in \mathcal F (G) \,.
\]
Note that $1_G = ebebde^{-1}de^{-1}bd$, so that $S$ is a product-one sequence over $G$.
To show that $S$ is minimal, assume to the contrary that $S = S_1 \bdot S_2$ for some non-trivial $S_1, S_2 \in \mathcal B (G)$ with $b \mid S_1$.
Then, we have either $b \mid S_2$ or $b \nmid S_2$.
If $b \mid S_2$, then by symmetry, we may assume that $b^{[2]} \mid S_1$ and $b \mid S_2$.
Since $S_2$ is product-one, it follows that $b \in \pi (S_2 \bdot b^{[-1]}) = \pi \big( d^{[i]} \bdot e^{[j]} \bdot (e^{-1})^{[k]} \big) \subseteq \langle c, d, e \rangle$ for some $i \in [0,3]$ and $j, k \in [0,2]$, a contradiction.
Thus, we must have that $b \nmid S_2$, implying that $b^{[3]} \mid S_1$ and $S_2 \mid d^{[3]} \bdot e^{[2]} \bdot (e^{-1})^{[2]}$.
Then, it follows by an easy computation that $S_2$ is one of the following sequences:
\[
  d^{[2]} \,, \quad e \bdot e^{-1} \,, \quad d^{[2]} \bdot e \bdot e^{-1} \,, \quad e^{[2]} \bdot (e^{-1})^{[2]} \,, \quad d^{[2]} \bdot e^{[2]} \bdot (e^{-1})^{[2]} \,, \quad d^{[3]} \bdot e^{[2]} \bdot (e^{-1})^{[2]}
\]
(note that the last sequence is product-one because $1_G = eede^{-1}de^{-1}d$).
If $e^{[2]} \bdot (e^{-1})^{[2]} \mid S_2$, then $S_1 = b^{[3]} \bdot d^{[i]}$ for some $i \in [0,3]$.
Since $bd = db$, it follows that $S_1$ cannot be a product-one sequence, a contradiction.
If $S_2 = e \bdot e^{-1}$, then $S_1 = b^{[3]} \bdot d^{[3]} \bdot e \bdot e^{-1}$, and hence $e \in \pi (S_1 \bdot (e^{-1})^{[-1]}) = \pi (b^{[3]} \bdot d^{[3]} \bdot e)$, which leads to a contradiction.
If $S_2 = d^{[2]}$ or $S_2 = d^{[2]} \bdot e \bdot e^{-1}$, then $S_1 = b^{[3]} \bdot d \bdot e^{[2]} \bdot (e^{-1})^{[2]}$ or $S_1 = b^{[3]} \bdot d \bdot e \bdot e^{-1}$, and in either case, $d \in \pi (S_1 \bdot d^{[-1]}) = \pi (b^{[3]} \bdot e^{[i]} \bdot (e^{-1})^{[i]}) \subseteq \langle a, b, e \rangle$ for some $i \in [1,2]$, again a contradiction.
Therefore, $S$ must be a minimal product-one sequence over $G$, and thus $\mathsf D (G) \ge |S| = 10$.

\smallskip
\noindent
{\bf SUBCASE 1.2.} $H \cong C_2 \times C_2 \times C_4$.
\smallskip

It follows by \cite[Theorem 3.27]{Pa-Si-Ya18} that $| \Aut (H) | = 192 = 2^{6} 3^{1}$.
Every subgroup of $\Aut (H)$ of order 3 is a Sylow 3-subgroup, and so by Sylow's Theorem, any two subgroups of $\Aut (H)$ of order 3 are conjugate.
Hence, for any two homomorphisms $\varphi_1$, $\varphi_2 \colon K \to \Aut (H)$, we infer that $H \rtimes_{\varphi_1} K \cong H \rtimes_{\varphi_2} K$ (see \cite[Exercise 6 on p.184]{Du-Fo04} or \cite[Theorem 3.3]{Ta55}), i.e., there exists a unique group $G$ up to isomorphism.
Let $H \cong C^{2}_2 \times C_4 \cong ( \langle a \rangle \times \langle b \rangle) \times \langle c \rangle$ and $K \cong C_3 \cong \langle d \rangle$.
Now we define the map $\varphi \colon K \to \Aut (H)$ by $\varphi (x) = f_x$, where $f_x \colon H \to H$ given by $f_x (h) = xhx^{-1}$.
Clearly, $\varphi$ is a group homomorphism and $\varphi [K]$ is a subgroup of $\Aut (H)$ of order 3.
Note that $f_d \colon H \to H$ defined by $f_d (a) = b$, $f_d (b) = ab$, and $f_d (c) = c$ generates a subgroup of $\Aut (H)$ of order 3.
Thus $\varphi [K]$ and $\langle f_d \rangle$ are conjugate, and hence $G \cong H \rtimes_{\varphi} K \cong \langle a, b, c, d \mid a^{2} = b^{2} = c^{4} = d^{3} = 1_G, \mbox{ and } ab = ba, ac = ca, bc = cb, dad^{-1} = b, dbd^{-1} = ab, dcd^{-1} = c \rangle$.
Then, $G$ has a subgroup $\langle c, d \mid c^{4} = d^{3} = 1_G, \text{ and } dcd^{-1} = c \rangle \cong C_4 \times C_3 \cong C_{12}$, and hence $G$ has an element of order 12, a contradiction to (\ref{eq:order}).

\smallskip
\noindent
{\bf SUBCASE 1.3.} $H \cong C^{2}_4$.
\smallskip

It follows by \cite[Theorem 3.27]{Pa-Si-Ya18} that $| \Aut (H) | = 96 = 2^{5} 3^{1}$, and by Sylow's Theorem, any two subgroups of $\Aut (H)$ of order 3 are conjugate.
Let $\varphi \colon K \to \Aut (H)$ be a homomorphism induced from the action of $K$ on $H$ by conjugation.
Since $G$ is non-abelian, $\varphi [K]$ must be a non-trivial subgroup of $\Aut (H)$ of order 3.
Let $H \cong C^{2}_4 \cong \langle a \rangle \times \langle b \rangle$ and $K \cong C_3 \cong \langle c \rangle$.
Then, the same argument as used in {\bf SUBCASE 1.2} ensures that there exists a unique group $G$ up to isomorphism.
If we set $\varphi (c) (a) = ab^{-1}$ and $\varphi (c) (b) = a^{-1}b^{2}$, then $G \cong H \rtimes K \cong \langle a, b, c, \mid a^{4} = b^{4} = c^{3} = 1_G, \mbox{ and } ab = ba, cac^{-1} = ab^{-1}, cbc^{-1} = a^{-1}b^{2} \rangle$.
Now, we consider the sequence
\[
  S = a^{[5]} \bdot b^{[2]} \bdot c^{[3]} \in \mathcal F (G) \,.
\]
Note that $1_G = b c a a a b c c a a$, so that $S$ is a product-one sequence over $G$ of length 10.
To show that $S$ is minimal, we assume that $S = S_1 \bdot S_2$ for some $S_1$, $S_2 \in \mathcal B (G)$ with $c \mid S_1$.
Since $c$ is independent with $a, b$ in $G$ and $\ord (c) = 3$, it follows that $c^{[3]} \mid S_1$, so that $S_2 \mid a^{[5]} \bdot b^{[2]}$.
If $S_2$ is a non-trivial sequence, then, since $a, b$ are independent in $G$ and $\ord (a) = \ord (b) = 4$, we infer that $S_2 = a^{[4]}$ and $S_1 = a \bdot b^{[2]} \bdot c^{[3]}$.
Since $S_1$ is product-one, we must have that $a^{-1} \in \pi \big( b^{[2]} \bdot c^{[3]} \big)$, but $\pi \big( b^{[2]} \bdot c^{[3]} \big) = \{ a^{2}, b^{2}, ab^{2}, a^{2}b^{2}, b^{3}, a^{3}b^{3} \}$, a contradiction.
Thus, $S_2$ must be a trivial sequence, and so $S = S_1$ is a minimal product-one sequence, forcing that $\mathsf D (G) \ge |S| = 10$.

\smallskip
\noindent
{\bf SUBCASE 1.4.} $H \cong (C_2 \times C_2) \rtimes C_4$.
\smallskip

Then, from the help of $\GAP$, we have that $| \Aut (H) | = 32 = 2^{5}$, and hence $\Aut (H)$ has no subgroup of order 3.
This means that, for any homomorphism $\varphi \colon K \to \Aut (H)$, $\varphi [K]$ must be a trivial subgroup of $\Aut (H)$, and thus we infer that $G \cong H \rtimes K \cong H \times K$ (see \cite[Proposition 5.11]{Du-Fo04}).
Therefore, $G \cong \langle a, b, c, d \mid a^{2} = b^{2} = c^{4} = d^{3} = 1_G, \mbox{ and } cac^{-1} = ab = ba, ad = da, bc = cb, bd = db, cd = dc \rangle$, and $G$ has a subgroup $\langle c, d \mid c^{4} = d^{3} = 1_G, \mbox{ and } cd = dc \rangle \cong C_{12}$, i.e., $G$ has an element of order 12, a contradiction to (\ref{eq:order}).

\smallskip
\noindent
{\bf SUBCASE 1.5.} $H \cong C_2 \times D_8$.
\smallskip

Then, from the help of $\GAP$, we have that $| \Aut (H) | = 64 = 2^{6}$, and hence $\Aut (H)$ has no subgroup of order 3.
Thus, the same argument as used in {\bf SUBCASE 1.4} ensures that $G \cong H \times K \cong C_6 \times D_8$.
Then $G$ has a subgroup $L \cong C_3 \times D_8$ of order 24, and it follows by Lemma~\ref{lem:dD} that $\mathsf D (G) \ge \mathsf D (L) = 14$.

\smallskip
\noindent
{\bf SUBCASE 1.6.} $H \cong C_2 \times Q_8$.
\smallskip

Then, from the help of $\GAP$, we have that $| \Aut (H) | = 192 = 2^{6} 3^{1}$, and by Sylow's Theorem, any two subgroups of $\Aut (H)$ of order 3 are conjugate.
Let $\varphi \colon K \to \Aut (H)$ be a homomorphism induced from the action of $K$ on $H$ by conjugation.

Suppose that $\varphi [K]$ is a non-trivial subgroup of $\Aut (H)$ of order 3.
Let $H \cong C_2 \times Q_8 \cong \langle a, b, c \mid a^{2} = b^{4} = 1_G, c^{2} = b^{2}, \mbox{ and } ab = ba, ac = ca, cbc^{-1} = b^{-1} \rangle$, and $K \cong C_3 \cong \langle d \rangle$.
Then, the same argument as used in {\bf SUBCASE 1.2} ensures that there exists a unique group $G$ up to isomorphism.
If we set $\varphi (d) (a) = a$, $\varphi (d) (b) = c^{-1}$, $\varphi (d) (c) = bc^{-1}$, then $G \cong H \rtimes_{\varphi} K \cong \langle a, b, c, d \mid a^{2} = b^{4} = d^{3} = 1_G, c^{2} = b^{2}, \mbox{ and } ab = ba, ac = ca, ad = da, cbc^{-1} = b^{-1}, dbd^{-1} = c^{-1}, dcd^{-1} = bc^{-1} \rangle$.
Note that $L \cong \langle b, c, d \mid b^{4} = c^{3} = 1_G, c^{2} = b^{2}, \mbox{ and } cbc^{-1} = b^{-1}, dbd^{-1} = c^{-1}, dcd^{-1} = bc^{-1} \rangle$ is a non-abelian subgroup of $G$ of order 24.
Hence, it follows by Lemma~\ref{lem:dD} that $\mathsf D (G) \ge \mathsf D (L) \ge 10$.

Suppose that $\varphi [K]$ is a trivial subgroup of $\Aut (H)$.
Then, $G \cong H \rtimes_{\varphi} K \cong H \times K \cong C_6 \times Q_8$.
Hence $G$ has a subgroup $L \cong C_3 \times Q_8$ of order 24, and it follows by Lemma~\ref{lem:dD} that $\mathsf D (G) \ge \mathsf D (L) = 14$.

\smallskip
\noindent
{\bf SUBCASE 1.7.} $H \cong (C_2 \times C_4) \rtimes_{\phi} C_2$, where $\phi \colon C_2 \to \Aut (C_2 \times C_4)$ is given by $\phi (c) \colon \left\{\begin{aligned} & \alpha \mapsto \alpha^{-1} \\ & \beta \mapsto \alpha^{2}\beta \,. \end{aligned} \right.$
\smallskip

Then, from the help of $\GAP$, we have that $| \Aut (H) | = 48 = 2^{4} 3^{1}$, and by Sylow's Theorem, any two subgroups of $\Aut (H)$ of order 3 are conjugate.
Let $\varphi \colon K \to \Aut (H)$ be a homomorphism induced from the action of $K$ on $H$ by conjugation.

Suppose that $\varphi [K]$ is a non-trivial subgroup of $\Aut (H)$ of order 3.
Let $H \cong \langle \alpha, \beta, c \mid \alpha^{4} = \beta^{2} = c^{2} = 1_G, \mbox{ and } \alpha\beta = \beta\alpha, c\alpha c = \alpha^{-1}, c\beta c = \alpha^{2}\beta \rangle$, and $K \cong C_3 \cong \langle d \rangle$.
Let $a = \alpha^{-1}\beta$, and $b = \alpha$. Then, it is easy to see that $H \cong \langle a, b, c \mid a^{4} = c^{2} = 1_G, b^{2} = a^{2}, \mbox{ and } ab = ba, ca = ac, cbc = a^{2}b \rangle$.
Then, by \cite[Theorem 3.3]{Ta55}, there exists a unique group $G$ up to isomorphism.
If we set $\varphi (d) (a) = a$, $\varphi (d) (b) = a^{-1}bc$, and $\varphi (d) (c) = a^{-1}b$, then $G \cong H \rtimes_{\varphi} K \cong \langle a, b, c, d \mid a^{4} = c^{2} = d^{3} = 1_G, b^{2} = a^{2}, \mbox{ and } ab = ba, ac = ca, ad = da, cbc = a^{2}b, dbd^{-1} = a^{-1}bc, dcd^{-1} = a^{-1}b \rangle$.
But, $G$ has a subgroup $\langle a, d \mid a^{4} = d^{3} = 1_G, \mbox{ and } ad = da \rangle \cong C_{12}$, i.e., $G$ has an element of order 12, a contradiction to (\ref{eq:order}).

Suppose that $\varphi [K]$ is a trivial subgroup of $\Aut (H)$.
Then, $G \cong H \rtimes_{\varphi} K \cong H \times K \cong \langle a, b, c, d \mid a^{4} = c^{2} = d^{3} = 1_G, b^{2} = a^{2}, \mbox{ and } ab = ba, ac = ca, ad = da, cbc = a^{2}b, bd = db, cd = dc \rangle$, and so $G$ has a subgroup $\langle a, d \mid a^{4} = d^{3} = 1_G, \mbox{ and } ad = da \rangle \cong C_{12}$, i.e., $G$ has an element of order 12, a contradiction to (\ref{eq:order}).

\smallskip
\noindent
{\bf SUBCASE 1.8.} $H \cong C_4 \rtimes C_4$.
\smallskip

Then, from the help of $\GAP$, we have that $|\Aut (H)| = 2^{5}$, and hence $\Aut (H)$ has no subgroup of order 3.
Thus, the same argument as used in {\bf SUBCASE 1.4} ensures that $G \cong H \times K \cong \langle a, b, c \mid a^{4} = b^{4} = c^{3} = 1_G, \mbox{ and } bab = a^{-1}, ac = ca, bc = cb \rangle$.
Hence, $G$ must have an element of order 12, a contradiction to (\ref{eq:order}).

\medskip
\noindent
{\bf CASE 2.} $j = 2$ and $k = \ell = 0$, i.e., $|G| = 2^{i} 3^{2}$.
\smallskip

Then, since $|G| > 42$, it follows that $i \in [3,4]$.
Note that $\mathsf D (G) \ge 10$ for $i \in [1,2]$ by Lemma~\ref{lem:dD}.

\smallskip
\noindent
{\bf SUBCASE 2.1.} $i = 3$, i.e., $|G| = 2^{3} 3^{2} = 72$.
\smallskip

By Lemma~\ref{lem:perfect}, every group of order 72 is not a perfect group, and so such a group has a non-trivial proper commutator subgroup $G'$, whence $G/G'$ is a non-trivial abelian group.
The possible values of the pair $\big( |G'|, |G/G'| \big)$ are as follows:
\[
  \big( |G'|, |G/G'| \big) \,\, \in \,\, \big\{ (2,36), \, (3,24), \, (4,18), \, (6,12), \, (8,9), \, (9,8), \, (12,6), \, (18,4), \, (24,3), \, (36,2) \big\} \,.
\]

If $|G'| = 36$ or $|G/G'| = 36$, then the assertion follows by Lemma~\ref{lem:subgroup}.

If $|G/G'| = 24$, then $G/G'$ is isomorphic to $C_{24}$, or to $C_2 \times C_{16}$, or to $C^{2}_2 \times C_6$.
Since $|G'| = 3$, $G' \cong C_3$, and it follows by Lemma~\ref{lem:ineqDC}.2 that $\mathsf D (G) \ge \mathsf d (G) + 1 \ge \big(\mathsf d (G') + \mathsf d (G/G')\big) + 1 \ge 10$.

If $|G/G'| = 18$, then $G/G'$ is isomorphic to $C_{18}$, or to $C_3 \times C_6$.
Since $|G'| = 4$, $G'$ is isomorphic to $C_4$, or to $C^{2}_2$, and it follows by Lemma~\ref{lem:ineqDC}.2 that $\mathsf D (G) \ge \mathsf d (G) + 1 \ge \big(\mathsf d (G') + \mathsf d (G/G')\big) + 1 \ge 10$.

If $|G/G'| = 12$, then $G/G'$ is isomorphic to $C_{12}$, or to $C_2 \times C_6$.
Since $|G'| = 6$, $G'$ is isomorphic to $C_6$, or to $D_6$, and it follows by Lemmas ~\ref{lem:ineqDC}.2 and \ref{lem:dD} that $\mathsf D (G) \ge \mathsf d (G) + 1 \ge \big(\mathsf d (G') + \mathsf d (G/G')\big) + 1 \ge 10$.

If $|G/G'| = 9$, then $G/G'$ is isomorphic to $C_9$, or to $C_3 \times C_3$.
Since $|G'| = 8$, $G'$ is isomorphic to one of the groups in $\{ C_8, \, C_2 \times C_4, \, C^{3}_2, \, D_8, \, Q_8 \}$.
Let $K$ be a Sylow $3$-subgroup of $G$.
Then, $|K| = 9$, and it follows by (\ref{eq:order}) that $K \cong C^{2}_3$.
Since $G'$ intersects trivially with $K$, we obtain that $G \cong G' \rtimes K$.
Note that $\Aut (C_8) \cong C^{2}_2$ and $\Aut (C_2 \times C_4) \cong \Aut (D_8) \cong D_8$.
This means that, for $G'$ is isomorphic to $C_8$, or to $C_2 \times C_4$, or to $D_8$, any homomorphism from $K$ to $\Aut (G')$ is trivial, so that $G \cong G' \rtimes K \cong G' \times K$.
Since $G$ is non-abelian, we only have that $G \cong C^{2}_3 \times D_8$.
In this case, $L \cong C_3 \times D_8$ is a subgroup of $G$ of order 24, and Lemma~\ref{lem:dD} ensures that $\mathsf D (G) \ge \mathsf D (L) = 14$.
For $G' \cong Q_8$, we observe that $\Aut (G') \cong S_4$, so that $|\Aut (G')| = 2^{3} 3^{1}$.
Let $\varphi \colon K \to \Aut (G')$ be a homomorphism induced from the action of $K$ on $G'$ by conjugation.
If $\varphi [K]$ is a trivial subgroup of $\Aut (G')$, then $G \cong G' \rtimes K \cong C^{2}_3 \times Q_8$, and so $L \cong C_3 \times Q_8$ is a subgroup of $G$ of order 24.
It follows by Lemma~\ref{lem:dD} that $\mathsf D (G) \ge \mathsf D (L) = 14$.
If $\varphi [K]$ is a non-trivial subgroup of $\Aut (G')$, then $\ker (\varphi)$ must be a non-trivial proper subgroup of $K$, and so $|\ker (\varphi)| = 3$.
This means that $K / \ker (\varphi) \cong \varphi [K]$ is a subgroup of $\Aut (G')$ of order 3.
Let $G' \cong \langle a, b \mid a^{4} = 1_G, b^{2} = a^{2}, \mbox{ and } bab^{-1} = a^{-1} \rangle$, $K \cong C^{2}_3 \cong \langle c \rangle \times \langle d \rangle$.
Since any subgroup of $\Aut (G')$ of order 3 are conjugate, by \cite[Theorem 3.3]{Ta55}, there exists a unique group $G$ up to isomorphism.
If we set $\varphi (c) (a) = b$, $\varphi (c) (b) = ab$, $\varphi (d) (a) = a$, and $\varphi (d) (b) = b$, then $G \cong \langle a, b, c, d \mid a^{4} = c^{3} = d^{3} = 1_G, b^{2} = a^{2}, \mbox{ and } bab^{-1} = a^{-1}, cd = dc, cac^{-1} = b, cbc^{-1} = ab, da = ad, db = bd \rangle$.
Hence, $G$ has a subgroup $L \cong \langle a, b, d \rangle \cong C_3 \times Q_8$, and it follows by Lemma~\ref{lem:dD} that $\mathsf D (G) \ge \mathsf D (L) = 14$.
Suppose now that $G' \cong C^{3}_2$.
Then, $\Aut (G') \cong \GL (3,2)$ and $|\Aut (G')| = 2^{3} 3^{1} 7^{1}$.
Let $\varphi \colon K \to \Aut (G')$ be a homomorphism induced from the action of $K$ on $G'$ by conjugation.
Since any subgroup of $\Aut (G')$ of order 3 are conjugate, we infer that there exists a unique group $G$ up to isomorphism.
Under the constraints of the rational canonical form, we may assume that $\varphi [K]$ is conjugate to a subgroup $\Bigg\langle \begin{bmatrix} 0 & 0 & 1 \\ 1 & 0 & 0 \\ 0 & 1 & 0 \end{bmatrix} \Bigg\rangle$, where the generator is the rational canonical form of a matrix of order 3 in $\GL (3,2)$ corresponding to the invariant factor $x^{3} - 1$.
Let $G' \cong C^{3}_2 \cong \langle a \rangle \times \langle b \rangle \times \langle c \rangle$ and $K \cong C^{2}_3 \cong \langle d \rangle \times \langle e \rangle$.
Thus, we obtain that $G \cong \langle a, b, c, d, e \mid a^{2} = b^{2} = c^{2} = d^{3} = e^{3} = 1_G, \mbox{ and } ab = ba, ac = ca, bc = cb, de = ed, dad^{-1} = c, dbd^{-1} = a, dcd^{-1} = b, \, ea= ae, eb = be, ec = ce \rangle$.
Let $x = abc$, $y = bc$, and $z = ac$.
Then, $G$ has a subgroup $L \cong \langle x, y, z, d \mid x^{2} = y^{2} = z^{2} = d^{3} = 1_G, \mbox{ and } xy = yx, xz = zx, yz = zy, dx = xd, dyd^{-1} = yz, dzd^{-1} = y \rangle \cong C_2 \times A_4$, and it follows by Lemma~\ref{lem:dD} that $\mathsf D (G) \ge \mathsf D (L) = 10$.

If $|G/G'| = 8$, then $G/G'$ is isomorphic to $C_8$, or to $C_2 \times C_4$, or to $C^{3}_2$.
Since $|G'| = 9$, we infer by (\ref{eq:order}) that $G' \cong C^{2}_3$ is a Sylow 3-subgroup of $G$.
If $G/G' \cong C_8$, then by Lemma~\ref{lem:ineqDC}.2, we obtain that $\mathsf D (G) \ge \mathsf d (G) + 1 \ge \big( \mathsf d (G') + \mathsf d (G/G') \big) + 1 = 12$.
We may assume that $G/G' \cong C_2 \times C_4$ or $G/G' \cong C^{3}_2$.
Let $K$ be a Sylow 2-subgroup of $G$.
Then, $G'$ intersects trivially with $K$, and so $G = G' K$, i.e., $G \cong G' \rtimes K$.
Then, $K \cong K/(G' \cap K) \cong G'K/G' = G/G'$, so either $K \cong C_2 \times C_4$ or $K \cong C^{3}_2$.
In either case, we infer that $G$ has a subgroup $H \cong C^{2}_2$ of order 4.
Since $G'$ intersects trivially with $H$, it follows that $L = G'H$ is a subgroup of $G$ of order 36.
If $L$ is abelian, then it follows by (\ref{eq:order}) that $L \cong C^{2}_6$, and so $\mathsf D (G) \ge \mathsf D (L) + 1 = 12$ by Lemma~\ref{lem:ineqDC}.1.
If $L$ is non-abelian, then it follows by Lemma~\ref{lem:dD} that $\mathsf D (G) \ge \mathsf D (L) \ge 11$.

If $|G/G'| = 6$, then $G/G' \cong C_6$.
Since $|G'| = 12$, by (\ref{eq:order}), $G'$ is isomorphic to one of the groups in $\{ C_2 \times C_6, \, Q_{12},  \,A_4, \, D_{12} \}$.
Thus, it follows by Lemmas~\ref{lem:ineqDC}.2 and \ref{lem:dD} that $\mathsf D (G) \ge \mathsf d (G) + 1 \ge \big( \mathsf d (G') + \mathsf d (G/G') \big) + 1 \ge 10$.

If $|G/G'| = 4$, then $G/G'$ is isomorphic to $C_4$, or to $C^{2}_2$.
Since $|G'| = 18$, by (\ref{eq:order}), $G'$ is isomorphic to one of the groups in $\{ C_3 \times C_6, \, C_3 \times D_6, \, C^{2}_3 \rtimes C_2 \}$.
If $G'$ is abelian, then Lemmas ~\ref{lem:ineqDC}.2 and \ref{lem:abD}.(b) ensure that $\mathsf D (G) \ge \mathsf d (G) + 1 \ge \big( \mathsf d (G') + \mathsf d (G/G') \big) + 1 \ge 10$.
If $G'$ is non-abelian, then it follows by Lemma~\ref{lem:dD} that $\mathsf D (G) \ge \mathsf D (G') = 10$.

If $|G/G'| = 3$, then $G/G' \cong C_3$.
If $G'$ is abelian, then since $|G'| = 24$, it follows by (\ref{eq:order}) that $G' \cong C^{2}_2 \times C_6$.
Thus, Lemmas ~\ref{lem:ineqDC}.2 ensures that $\mathsf D (G) \ge \mathsf d (G) + 1 \ge \big( \mathsf d (G') + \mathsf d (G/G') \big) + 1 \ge 10$.
If $G'$ is non-abelian, then it follows by Lemma~\ref{lem:dD} that $\mathsf D (G) \ge \mathsf D (G') \ge 10$.

\smallskip
\noindent
{\bf SUBCASE 2.2.} $i = 4$, i.e., $|G| = 2^{4} 3^{2} = 144$.
\smallskip

By Lemma~\ref{lem:perfect}, every group of order 144 is not a perfect group, and so such a group has a non-trivial proper commutator subgroup.
The possible values of the pair $\big( |G'|, |G/G'| \big)$ are as follows:
\[
  \begin{aligned}
    \big( |G'|, |G/G'| \big) \,\, \in & \,\,\,\, \big\{ (2,72), (3,48), (4,36), (6,24), (8,18), (9,16), (12,12), (16,9), (18,8), (24,6), (36,4), \\
                                 & \quad\,\,  (48,3), (72,2) \big\} \,.
  \end{aligned}
\]

If $|G'| = 36$ or $|G/G'| = 36$, then Lemma~\ref{lem:subgroup} ensures that $\mathsf D (G) \ge 10$.

Suppose that $|G'| \in \{ 72, 48 \}$.
If $G'$ is non-abelian, then {\bf SUBCASE 2.1} and {\bf CASE 1} ensure that $\mathsf D (G) \ge \mathsf D (G') \ge 10$.
If $G'$ is abelian, then by (\ref{eq:order}), we infer that $G' \cong C_2 \times C^{2}_6$ (resp., $G' \cong C^{3}_2 \times C_6$) if $|G'| = 72$ (resp., $|G'| = 48$).
In either case, it follows by Lemma~\ref{lem:ineqDC}.1 that $\mathsf D (G) \ge \mathsf D (G') + 1 \ge 10$.

Suppose that $|G/G'| \in \{ 72, 48 \}$.
If $G/G'$ has an element of order at least 9, then by Lemmas ~\ref{lem:ineqDC}.3 and \ref{lem:order9}, $\mathsf D (G) \ge \mathsf D (G/G') \ge 10$, and so we may assume that every element of $G/G'$ has the order at most 8.
Since $G/G'$ is abelian, $G/G'$ is isomorphic to $C_2 \times C^{2}_6$, or to $C^{3}_2 \times C_6$.
In the former case, Lemma~\ref{lem:ineqDC}.3 ensures that $\mathsf D (G) \ge \mathsf D (G/G') \ge 10$.
In the latter case, since $G' \cong C_3$, it follows by Lemma~\ref{lem:ineqDC}.2 that $\mathsf D (G) \ge \mathsf d (G) + 1 \ge \big( \mathsf d (G') + \mathsf d (G/G') \big) + 1 \ge 11$.

If $|G/G'| = 24$, then $G/G'$ is isomorphic to $C_{24}$, or to $C_2 \times C_{12}$, or to $C^{2}_2 \times C_6$.
Since $|G'| = 6$, $G'$ is isomorphic to $C_6$, or to $D_6$.
By Lemmas~\ref{lem:ineqDC}.2 and \ref{lem:dD}, $\mathsf D (G) \ge \mathsf d (G) + 1 \ge \big( \mathsf d (G') + \mathsf d (G/G') \big) + 1 \ge 11$.

If $|G/G'| = 18$, then $G/G'$ is isomorphic to $C_{18}$, or to $C_3 \times C_6$.
Since $|G'| = 8$, by (\ref{eq:order}), $G'$ is isomorphic to one of the groups in $\{ C_2 \times C_4, \, C^{3}_2, \, D_8, \, Q_8 \}$.
Thus, it follows by Lemmas~\ref{lem:ineqDC}.2 and \ref{lem:dD} that $\mathsf D (G) \ge \mathsf d (G) + 1 \ge \big( \mathsf d (G') + \mathsf d (G/G') \big) + 1 \ge 11$.

If $|G/G'| = 16$, then $G/G'$ is isomorphic to $C_{16}$, or to $C^{2}_4$, or to $C_2 \times C_8$, or to $C^{2}_2 \times C_4$, or to $C^{4}_2$.
Since $|G'| = 9$, we infer by (\ref{eq:order}) that $G' \cong C^{2}_3$.
If $G/G' \ncong C^{4}_2$, then it follows by Lemmas~\ref{lem:ineqDC}.2 and \ref{lem:abD}.(a) that $\mathsf D (G) \ge \mathsf d (G) + 1 \ge \big( \mathsf d (G') + \mathsf d (G/G') \big) + 1 \ge 10$.
Thus, we may assume that $G/G' \cong C^{4}_2$.
Note that $G'$ is a Sylow 3-subgroup of $G$.
If $K$ is a Sylow 2-subgroup of $G$, then $G'$ intersects trivially with $K$, and so $G = G' K$, i.e., $G \cong G' \rtimes K$.
Then, $K \cong K/(G' \cap K) \cong G'K/G' = G/G' \cong C^{4}_2$, and so $G$ must have a subgroup $L$ of order 36, whence $\mathsf D (G) \ge \mathsf D (L) \ge 11$.

If $|G/G'| = 12$, then $G/G'$ is isomorphic to $C_{12}$, or to $C_2 \times C_6$.
Since $|G'| = 12$, by (\ref{eq:order}), $G'$ is isomorphic to one of the groups in $\{ C_2 \times C_6, \, Q_{12},\,  A_4, \, D_{12} \}$.
Thus, it follows by Lemmas~\ref{lem:ineqDC}.2 and \ref{lem:dD} that $\mathsf D (G) \ge \mathsf d (G) + 1 \ge \big( \mathsf d (G') + \mathsf d (G/G') \big) + 1 \ge 11$.

If $|G/G'| = 9$, then $G/G'$ is isomorphic to $C_9$, or to $C_3 \times C_3$.
If $G'$ is non-abelian, then by Lemmas~\ref{lem:ineqDC}.2 and \ref{lem:dD}, $\mathsf D (G) \ge \mathsf d (G) + 1 \ge \big( \mathsf d (G') + \mathsf d (G/G') \big) + 1 \ge 10$.
If $G'$ is abelian, then in view of (\ref{eq:order}), $G'$ is isomorphic to one of the groups in $\{ C^{2}_4, \, C^{2}_2 \times C_4, \, C_2 \times C_8, \, C^{4}_2 \}$.
If $G' \ncong C^{4}_2$, then it follows by Lemma~\ref{lem:ineqDC}.2 that $\mathsf D (G) \ge \mathsf d (G) + 1 \ge \big( \mathsf d (G') + \mathsf d (G/G') \big) + 1 \ge 10$.
Thus, we may assume that $G' \cong C^{4}_2$, which is a Sylow 2-subgroup of $G$.
If $K$ is a Sylow 3-subgroup of $G$, then $G = G'K$ with $G' \cap K = \{ 1_G \}$, i.e., $G \cong G' \rtimes K$.
Hence, $K \cong K/(G' \cap K) \cong G'K/G' = G/G'$, and in view of (\ref{eq:order}), we must have that $K \cong C^{2}_3$, whence $G$ has a subgroup $L = G' C_3$ of order 48.
If $L$ is abelian, then $\mathsf D (G) \ge \mathsf D (L) + 1 \ge 10$ by Lemma~\ref{lem:ineqDC}.1.
If $L$ is non-abelian, then {\bf CASE 1} ensures that $\mathsf D (G) \ge \mathsf D (L) \ge 10$.

If $|G/G'| = 8$, then $G/G'$ is isomorphic to $C_8$, or to $C_2 \times C_4$, or to $C^{3}_2$.
If $G'$ is non-abelian, then by Lemma~\ref{lem:dD}, $\mathsf D (G) \ge \mathsf D (G') \ge 10$.
If $G'$ is abelian, then by (\ref{eq:order}), we must have that $G' \cong C_3 \times C_6$, and it follows by Lemma~\ref{lem:ineqDC}.2 that $\mathsf D (G) \ge \mathsf d (G) + 1 \ge \big( \mathsf d (G') + \mathsf d (G/G') \big) + 1 \ge 11$.

If $|G/G'| = 6$, then $G/G' \cong C_6$.
If $G'$ is non-abelian, then by Lemma~\ref{lem:dD}, $\mathsf D (G) \ge \mathsf D (G') \ge 10$.
If $G'$ is abelian, then by (\ref{eq:order}), we must have that $G' \cong C^{2}_2 \times C_6$, and it follows by Lemma~\ref{lem:ineqDC}.2 that $\mathsf D (G) \ge \mathsf d (G) + 1 \ge \big( \mathsf d (G') + \mathsf d (G/G') \big) + 1 \ge 13$.

\smallskip
\noindent
{\bf CASE 3.} $j = 3$ and $k = \ell = 0$, i.e., $|G| = 2^{i} 3^{3}$.
\smallskip

Then, since $|G| > 42$, it follows that $i \in [1,4]$.

\smallskip
\noindent
{\bf SUBCASE 3.1.} $i = 1$, i.e., $|G| = 2^{1} 3^{3} = 54$.
\smallskip

Let $H$ be a Sylow 3-subgroup of $G$.
Since $(G \colon H) = 2$, $H$ must be a normal subgroup of $G$.
Let $K$ be a Sylow 2-subgroup of $G$.
Then, since $H$ intersects trivially with $K$, it follows that $G \cong H \rtimes K$.

Suppose that $H$ is abelian.
Then, by (\ref{eq:order}), we infer that $H \cong C^{3}_3$.
Note that $\Aut (H) \cong \GL (3,3)$, and $|\Aut (H)| = 2^{5} 3^{3} 13^{1}$.
Let $A \in \GL (3,3)$ be such that $A^{2} = I$.
Then, the minimal polynomial of $A$ divides $x^{2} - 1 = (x-1)(x+1) \in \mathbb Z_3 [x]$.
Under the constraints of the rational canonical form, we obtain the following permissible lists of invariant factors:
\begin{enumerate}
\item[(i)] $(x-1)$, $(x-1)$, $(x-1)$

\smallskip
\item[(ii)] $(x+1)$, $(x+1)$, $(x+1)$

\smallskip
\item[(iii)] $(x+1)$, $(x-1)(x+1)$

\smallskip
\item[(iv)] $(x-1)$, $(x-1)(x+1)$
\end{enumerate}

\noindent
Then, (i) corresponds to the identity matrix in $\GL (3,3)$, and (ii) (resp., (iii) and (iv)) corresponds to the matrix $B= \begin{bmatrix} 2 & & \\ & 2 & \\ & & 2 \end{bmatrix} \Bigg($ resp., $C = \begin{bmatrix} 2 & & \\ & 0 & 1 \\ & 1 & 0 \end{bmatrix}$, and $D = \begin{bmatrix} 1 & & \\ & 0 & 1 \\ & 1 & 0 \end{bmatrix} \Bigg)$.
Let $H \cong C^{3}_3 \cong \langle a \rangle \times \langle b \rangle \times \langle c \rangle$, $K \cong C_2 \cong \langle d \rangle$, and $\varphi \colon K \to \Aut (H)$ be a homomorphism induced from the action of $K$ on $H$ by conjugation.
Since $G$ is non-abelian, $\varphi [K] \le \Aut (H)$ is a non-trivial subgroup of order 2, and thus $\varphi [K] = \langle B \rangle$, $\varphi [K] = \langle C \rangle$, or $\varphi [K] = \langle D \rangle$.

If $\varphi [K] = \langle B \rangle$, then $G \cong \langle a, b, c, d \mid a^{3} = b^{3} = c^{3} = d^{2} = 1_G, \mbox{ and } ab = ba, ac = ca, bc = cb, dad = a^{-1}, dbd = b^{-1}, dcd = c^{-1} \rangle$, and $G$ has a subgroup $L = \langle a, b ,d \mid a^{3} = b^{3} = d^{2} = 1_G, \mbox{ and } ab = ba, dad = a^{-1}, dbd = b^{-1} \rangle$ of order 18.
Thus, it follows by Lemma~\ref{lem:dD} that $\mathsf D (G) \ge \mathsf D (L) = 10$.

If $\varphi [K] = \langle C \rangle$, then $G \cong \langle a, b, c, d \mid a^{3} = b^{3} = c^{3} = d^{2} = 1_G, \mbox{ and } ab = ba, ac = ca, bc = cb, dad = a^{-1}, dbd = c, \, dcd = b \rangle$.
Let $x = c^{-1}b$.
Then, it is easy to see that $L \cong \langle a, x ,d \mid a^{3} = x^{3} = d^{2} = 1_G, \mbox{ and } ax = xa, dad = a^{-1}, dxd = x^{-1} \rangle$ is a subgroup of $G$ of order 18, and  Lemma~\ref{lem:dD} ensures that $\mathsf D (G) \ge \mathsf D (L) = 10$.

If $\varphi [K] = \langle D \rangle$, then $G \cong \langle a, b, c, d \mid a^{3} = b^{3} = c^{3} = d^{2} = 1_G, \mbox{ and } ab = ba, ac = ca, bc = cb, da = ad, dbd = c, dcd = b \rangle$.
Let $x = c^{-1}b$.
Then, it is easy to see that $L \cong \langle a, x, d \mid a^{3} = x^{3} = d^{2} = 1_G, \mbox{ and } ax = xa, ad = da, dxd = x^{-1} \rangle$ is a subgroup of $G$ of order 18, and it follows by Lemma~\ref{lem:dD} that $\mathsf D (G) \ge \mathsf D (L) = 10$.

Suppose that $H$ is non-abelian.
Then, by (\ref{eq:order}), we infer that $H \cong H_{27}$.
Note that $\Aut (H) \cong \Inn (H) \rtimes \GL (2,3)$ (see \cite[Lemma A.20.11]{Do-Ha92}), where $Z (H) \cong C_3$ and $\Inn (H) \cong H / Z (H) \cong C^{2}_3$.
Let $H \cong \langle a, b, c \mid a^{3} = b^{3} = c^{3} = 1_G, \text{ and } ac = ca, \, bc = cb, \, bab^{-1} = ac^{-1} \rangle$, $K \cong C_2 \cong \langle d \rangle$, and $\varphi \colon K \to \Aut (H)$ be a homomorphism induced from the action of $K$ on $H$ by conjugation.

If $\varphi [K]$ is trivial, then $G \cong H \rtimes K \cong H \times K$, and since $\langle a, c \rangle$ is a normal subgroup of $H$, we infer that $G$ has a normal subgroup $L \cong \langle a, c, d \mid a^{3} = c^{3} = d^{2} = 1_G, \mbox{ and } ac = ca, ad = da, cd = dc \rangle \cong C_3 \times C_6$.
Thus, it follows by Lemma~\ref{lem:ineqDC}.2 that $\mathsf D (G) \ge \mathsf d (G) + 1 \ge \big( \mathsf d (L) + \mathsf d (G/L) \big) + 1 = 10$.

If $\varphi [K]$ is non-trivial, then $\varphi [K]$ is a subgroup of $\Aut (H)$ of order 2, and since $|\Aut (H)| = 2^{4} 3^{3}$, $\varphi [K]$ is a subgroup of a Sylow 2-subgroup of $\Aut (H)$.
Let $A = \begin{bmatrix} 0 & 1 \\ 1 & 1 \end{bmatrix}$ and $B = \begin{bmatrix} 1 & 1 \\ 0 & 2 \end{bmatrix}$ be elements of $\GL (2,3)$.
Then, $A^{8} = B^{2} = I$ and $BAB = A^{3}$, and so $\langle A, B \rangle$ is a subgroup of $\GL (2,3)$ of order 16, i.e., $\langle A, B \rangle$ is isomorphic to a Sylow 2-subgroup of $\Aut (H)$.
By Sylow's Theorem, $\varphi [K]$ is isomorphic to a conjugate of a subgroup of $\langle A, B \rangle$ of order 2.
It is easy to see that $\langle A^{4} \rangle$, $\langle B \rangle$, $\langle A^{2}B \rangle$, $\langle A^{4}B \rangle$, and $\langle A^{6}B \rangle$ are distinct five subgroups of $\langle A, B \rangle$ of order 2, where
\[
  A^{4} = \begin{bmatrix} 2 & 0 \\ 0 & 2 \end{bmatrix}, \quad A^{2}B = \begin{bmatrix} 1 & 0 \\ 1 & 2 \end{bmatrix}, \quad A^{4}B = \begin{bmatrix} 2 & 2 \\ 0 & 1 \end{bmatrix}, \und A^{6}B = \begin{bmatrix} 2 & 0 \\ 2 & 1 \end{bmatrix}.
\]
Note that $\langle A^{4} \rangle$ is the center of $\GL (2,3)$, and since
\[
  \begin{bmatrix} 2 & 1 \\ 1 & 0 \end{bmatrix} \begin{bmatrix} 1 & 1 \\ 0 & 2 \end{bmatrix} \begin{bmatrix} 2 & 1 \\ 1 & 0 \end{bmatrix}^{-1} \hspace{-5pt}= \begin{bmatrix} 1 & 0 \\ 1 & 2 \end{bmatrix} \hspace{-1pt}, \quad \begin{bmatrix} 1 & 0 \\ 1 & 2 \end{bmatrix} \begin{bmatrix} 1 & 1 \\ 0 & 2 \end{bmatrix} \begin{bmatrix} 1 & 0 \\ 1 & 2 \end{bmatrix}^{-1} \hspace{-5pt}= \begin{bmatrix} 2 & 2 \\ 0 & 1 \end{bmatrix} \hspace{-1pt}, \quad \begin{bmatrix} 0 & 1 \\ 1 & 1 \end{bmatrix} \begin{bmatrix} 1 & 1 \\ 0 & 2 \end{bmatrix} \begin{bmatrix} 0 & 1 \\ 1 & 1 \end{bmatrix}^{-1} \hspace{-5pt}= \begin{bmatrix} 2 & 0 \\ 2 & 1 \end{bmatrix} \hspace{-1pt},
\]
it follows that $B$, $A^{2}B$, $A^{4}B$, and $A^{6}B$ are conjugate in $\GL (2,3)$.
Thus, we infer that any two non-central cyclic subgroups of $\langle A, B \rangle$ of order 2 are conjugate in $\GL (2,3)$, and so it follows by \cite[Theorem 3.3]{Ta55} that $\varphi [K]$ is isomorphic to $\langle A^{4} \rangle$, or to $\langle B \rangle$.
In the former case, $G \cong \langle a, b, c, d \mid a^{3} = b^{3} = c^{3} = d^{2} = 1_G, \mbox{ and } ac = ca, bc = cb, bab^{-1} = ca^{-1}, dad = a^{-1}, dbd = b^{-1}, dcd = c \rangle$, and $G$ has a subgroup $L \cong \langle a, c, d \mid a^{3} = c^{3} = d^{2} = 1_G, \mbox{ and } ac = ca, dc = cd, dad = a^{-1} \rangle$ of order 18, whence it follows by Lemma~\ref{lem:dD} that $\mathsf D (G) \ge \mathsf D (L) = 10$.
In the latter case, $G \cong \langle a, b, c, d \mid a^{3} = b^{3} = c^{3} = d^{2} = 1_G, \mbox{ and } ac = ca, bc = cb, bab^{-1} = ac^{-1}, dad = ab, dbd = b^{-1}, dcd = c \rangle$, and $G$ has a subgroup $L \cong \langle b, c, d \mid b^{3} = c^{3} = d^{2} = 1_G, \mbox{ and } bc = cb, cd = dc, dbd = b^{-1} \rangle$ of order 18, whence it follows by Lemma~\ref{lem:dD} that $\mathsf D (G) \ge \mathsf D (L) = 10$.

\smallskip
\noindent
{\bf SUBCASE 3.2.} $i = 2$, i.e., $|G| = 2^{2} 3^{3} = 108$.
\smallskip

By Lemma~\ref{lem:perfect}, every group of order 108 is not a perfect group, and so such a group has a non-trivial proper commutator subgroup.
The possible values of the pair $\big( |G'|, |G/G'| \big)$ are as follows:
\[
  \big( |G'|, |G/G'| \big) \,\, \in \,\, \big\{ (2,54), \, (3,36), \, (4,27), \, (6,18), \, (9,12), \, (12,9), \, (18, 6), \, (27, 4), \, (36,3), \, (54,2) \big\} \,.
\]

If $|G'| = 36$ or $|G/G'| = 36$, then Lemma~\ref{lem:subgroup} ensures that $\mathsf D (G) \ge 10$.

Suppose that $|G'| = 54$.
If $G'$ is non-abelian, then {\bf SUBCASE 3.1} ensures that $\mathsf D (G) \ge \mathsf D (G') \ge 10$.
If $G'$ is abelian, then by (\ref{eq:order}), we must have that $G' \cong C^{2}_3 \time C_6$, and it follows by Lemma~\ref{lem:ineqDC}.1 that $\mathsf D (G) \ge \mathsf D (G') + 1 \ge 11$.

Suppose that $|G/G'| = 54$.
If $G/G'$ has an element of order at least 9, then by Lemmas~\ref{lem:ineqDC}.3 and \ref{lem:order9}, $\mathsf D (G) \ge \mathsf D (G/G') \ge 10$, and so we may assume that every element in $G/G'$ has the order at most 8.
Since $G/G'$ is abelian, $G/G'$ is isomorphic to $C^{2}_3 \times C_6$, and it follows by Lemma~\ref{lem:ineqDC}.3 that $\mathsf D (G) \ge \mathsf D (G/G') \ge 10$.

If $|G/G'| = 27$, then $G/G'$ is isomorphic to $C_{27}$, or to $C_3 \times C_9$, or to $C^{3}_3$.
Since $|G'| = 4$, $G'$ is isomorphic to $C_4$, or to $C_2 \times C_2$.
If $G/G' \ncong C^{3}_3$, then it follows by Lemma~\ref{lem:ineqDC}.2 that $\mathsf D (G) \ge \mathsf d (G) + 1 \ge \big( \mathsf d (G') + \mathsf d (G/G') \big) + 1 \ge 13$.
Hence, we may assume that $G/G' \cong C^{3}_3$.
Note that $G'$ is a Sylow 2-subgroup of $G$.
If $K$ is a Sylow 3-subgroup of $G$, then $G'$ intersects trivially with $K$, and so $G = G'K$, i.e., $G \cong G' \rtimes K$.
Then, $K \cong K/(G' \cap K) \cong G'K/G' = G/G'$, and thus $G$ has a subgroup $L = G' H$ of order 36, where $H \le K$ with $|H| = 9$, whence $\mathsf D (G) \ge \mathsf D (L) \ge 11$ by Lemma~\ref{lem:dD}.

If $|G/G'| = 18$, then $G/G'$ is isomorphic to $C_{18}$, or to $C_3 \times C_6$.
Since $|G'| = 6$, $G'$ is isomorphic to $C_6$, or to $D_6$.
Thus, it follow by Lemmas~\ref{lem:ineqDC}.2 and \ref{lem:dD} that $\mathsf D (G) \ge \mathsf d (G) + 1 \ge \big( \mathsf d (G') + \mathsf d (G/G') \big) + 1 \ge 11$.

If $|G/G'| = 12$, then $G/G'$ is isomorphic to $C_{12}$, or to $C_2 \times C_6$.
Since $|G'| = 9$, (\ref{eq:order}) ensures that $G' \cong C^{2}_3$.
Thus, it follows by Lemma~\ref{lem:ineqDC}.2 that $\mathsf D (G) \ge \mathsf d (G) + 1 \ge \big( \mathsf d (G') + \mathsf d (G/G') \big) + 1 \ge 11$.

If $|G/G'| = 9$, then $G/G'$ is isomorphic to $C_9$, or to $C^{2}_3$.
Since $|G'| = 12$, by (\ref{eq:order}), $G'$ is isomorphic to one of the group in $\{ C_2 \times C_6, \, D_{12}, \, Q_{12}, \, A_4 \}$.
If $G' \ncong A_4$, then by Lemmas~\ref{lem:ineqDC}.2 and \ref{lem:dD} that $\mathsf D (G) \ge \mathsf d (G) + 1 \ge \big( \mathsf d (G') + \mathsf d (G/G') \big) + 1 \ge 11$.
We may assume that $G' \cong A_4$.
Then, by passing to $G/G'$, we can take an element $g \in G \setminus G'$ such that $g^{3} \in G'$, and thus we infer by (\ref{eq:order}) that $\ord (g) \in \{ 3, 6 \}$.
If $\ord (g) = 3$, then $G'$ intersects trivially with $\langle g \rangle$, and so $L = G' \langle g \rangle$ is a proper subgroup of $G$ of order 36.
If $\ord (g) = 6$, then since $g^{3} \in G'$, it follows that $|G' \cap \langle g \rangle | = 2$, and so $L = G' \langle g \rangle$ is a proper subgroup of $G$ of order 36.
In either case, Lemma~\ref{lem:subgroup} ensures that $\mathsf D (G) \ge 10$.

If $|G/G'| = 6$, then $G/G' \cong C_6$.
Since $|G'| = 18$, by (\ref{eq:order}), $G'$ is isomorphic to one of the groups in $\{ C_3 \times C_6, \, C_3 \times D_6, \, C^{2}_3 \rtimes C_2 \}$.
Thus, it follows by Lemmas~\ref{lem:ineqDC}.2 and \ref{lem:dD} that $\mathsf D (G) \ge \mathsf d (G) + 1 \ge \big( \mathsf d (G') + \mathsf d (G/G') \big) + 1 \ge 11$.

If $|G/G'| = 4$, then $G/G'$ is isomorphic to $C_4$, or to $C_2 \times C_2$.
Since $|G'| = 27$, $G'$ is a Sylow 3-subgroup of $G$.
If $K$ is a Sylow 2-subgroup of $G$, then $G = G'K$ with $G' \cap K = \{ 1_G \}$, i.e., $G \cong G' \rtimes K$.
Hence, $K \cong K/(G' \cap K) \cong G'K/G' = G/G'$, and so $G$ has a subgroup $L = G'C_2$ of order 54.
If $L$ is abelian, then in view of (\ref{eq:order}), $L \cong C^{2}_3 \times C_6$, and so it follows by Lemma~\ref{lem:ineqDC}.1 that $\mathsf D (G) \ge \mathsf D (L) + 1 \ge 11$.
If $L$ is non-abelian, then {\bf SUBCASE 3.1} ensures that $\mathsf D (G) \ge \mathsf D (L) \ge 10$.

\smallskip
\noindent
{\bf SUBCASE 3.3.} $i = 3$, i.e., $|G| = 2^{3} 3^{3} = 216$.
\smallskip

By Lemma~\ref{lem:perfect}, every group of order 216 is not a perfect group, and so such a group has a non-trivial proper commutator subgroup.
The possible values of the pair $\big( |G'|, |G/G'| \big)$ are as follows:
\[
  \begin{aligned}
  \big( |G'|, |G/G'| \big) \,\, \in & \,\, \big\{ (2,108), \, (3,72), \, (4,54), \, (6,36), \, (8,27), \, (9,24) \, (12,18), \, (18, 12), \, (24,9), \, (27, 8), \\
                                     & \quad (36,6), \, (54,4), \, (72, 3), \, (108, 2) \big\} \,.
  \end{aligned}
\]

If $|G'| = 36$ or $|G/G'| = 36$, then Lemma~\ref{lem:subgroup} ensures that $\mathsf D (G) \ge 10$.

Suppose that $|G'| \in \{ 108, 72, 54 \}$.
If $G'$ is non-abelian, then {\bf SUBCASES 3.2}--{\bf 3.1}, and {\bf 2.1} ensure that $\mathsf D (G) \ge \mathsf D (G') \ge 10$.
If $G'$ is abelian, then we infer by (\ref{eq:order}) that $G' \cong C_3 \times C^{2}_6$ (resp., $G' \cong C_2 \times C^{2}_6$, or $G' \cong C^{2}_3 \times C_6$) if $|G'| = 108$ (resp., $|G'| = 72$, or $|G'| = 54$).
In either case, it follows by Lemma~\ref{lem:ineqDC}.1 that $\mathsf D (G) \ge \mathsf D (G') + 1 \ge 11$.

Suppose that $|G/G'| \in \{ 108, 72, 54 \}$.
If $G/G'$ has an element of order at least 9, then Lemmas~\ref{lem:ineqDC}.3 and \ref{lem:order9} ensure that $\mathsf D (G) \ge \mathsf D (G/G') \ge 10$.
So, we may assume that every element in $G/G'$ has the order at most 8.
Since $G/G'$ is abelian, $G/G'$ is isomorphic to $C_3 \times C^{2}_6$, or to $C_2 \times C^{2}_6$, or to $C^{2}_3 \times C_6$.
In either case, we infer again by Lemma~\ref{lem:ineqDC}.3 that $\mathsf D (G) \ge \mathsf D (G/G') \ge 10$.

If $|G/G'| = 27$, then $G/G'$ is isomorphic to $C_{27}$, or to $C_3 \times C_9$, or to $C^{3}_3$.
Since $|G'| = 8$, $G'$ is isomorphic to one of the groups in $\{ C_2 \times C_4, \, C^{3}_2, \, D_8, \, Q_8 \}$.
Thus, it follows by Lemmas~\ref{lem:ineqDC}.2 and \ref{lem:dD} that $\mathsf D (G) \ge \mathsf d (G) + 1 \ge \big( \mathsf d (G') + \mathsf d (G/G') \big) + 1 \ge 10$.

If $|G/G'| = 24$, then $G/G'$ is isomorphic to $C_{24}$, or to $C_2 \times C_{12}$, or to $C^{2}_2 \times C_6$.
Since $|G'| = 9$, we infer by (\ref{eq:order}) that $G' \cong C^{2}_3$.
Thus, it follows by Lemma~\ref{lem:ineqDC}.2 that $\mathsf D (G) \ge \mathsf d (G) + 1 \ge \big( \mathsf d (G') + \mathsf d (G/G') \big) + 1 \ge 12$.

If $|G/G'| = 18$, then $G/G'$ is isomorphic to $C_{18}$, or to $C_3 \times C_6$.
Since $|G'| = 12$, by (\ref{eq:order}), $G'$ is isomorphic to one of the groups in $\{ C_2 \times C_6, D_{12}, Q_{12}, A_4 \}$.
Thus, it follows by Lemmas~\ref{lem:ineqDC}.2 and \ref{lem:dD} that $\mathsf D (G) \ge \mathsf d (G) + 1 \ge \big( \mathsf d (G') + \mathsf d (G/G') \big) + 1 \ge 12$.

If $|G/G'| = 12$, then $G/G'$ is isomorphic to $C_{12}$, or to $C_2 \times C_6$.
If $G'$ is abelian, then in view of (\ref{eq:order}), $G' \cong C_3 \times C_6$, and it follows by Lemma~\ref{lem:ineqDC}.2 that $\mathsf D (G) \ge \mathsf d (G) + 1 \ge \big( \mathsf d (G') + \mathsf d (G/G') \big) + 1 \ge 14$.
If $G'$ is non-abelian, then it follows by Lemma~\ref{lem:dD} that $\mathsf D (G) \ge \mathsf D (G') = 10$.

If $|G/G'| = 9$, then $G/G'$ is isomorphic to $C_9$, or to $C_3 \times C_3$.
If $G'$ is abelian, then in view of (\ref{eq:order}), $G' \cong C^{2}_2 \times C_6$, and it follows by Lemma~\ref{lem:ineqDC}.2 that $\mathsf D (G) \ge \mathsf d (G) + 1 \ge \big( \mathsf d (G') + \mathsf d (G/G') \big) + 1 \ge 12$.
If $G'$ is non-abelian, then it follows by Lemma~\ref{lem:dD} that $\mathsf D (G) \ge \mathsf D (G') \ge 10$.

If $|G/G'| = 8$, then $G/G'$ is isomorphic to $C_8$, or to $C_2 \times C_4$, or to $C^{3}_2$.
By (\ref{eq:order}), $G'$ is isomorphic to $C^{3}_3$, or to $H_{27}$, and so it follows by Lemmas~\ref{lem:ineqDC}.2 and \ref{lem:dD} that $\mathsf D (G) \ge \mathsf d (G) + 1 \ge \big( \mathsf d (G') + \mathsf d (G/G') \big) + 1 \ge 10$.

\smallskip
\noindent
{\bf SUBCASE 3.4.} $i = 4$, i.e., $|G| = 2^{4} 3^{3} = 432$.
\smallskip

By Lemma~\ref{lem:perfect}, every group of order 432 is not a perfect group, and so such a group has a non-trivial proper commutator subgroup.
The possible values of the pair $\big( |G'|, |G/G'| \big)$ are as follows:
\[
  \begin{aligned}
  \big( |G'|, |G/G'| \big) \,\, \in & \,\, \big\{ (2,216), \, (3,144), \, (4,108), \, (6,72), \, (8,54), \, (9,48) \, (12,36), \, (16,27), \, (18, 24), \, (24,18), \\
                                     & \quad (27,16), \, (36,12), \, (48,9), \, (54,8), \, (72, 6), \, (108, 4), \, (144,3), \, (216, 2) \big\} \,.
  \end{aligned}
\]

If $|G'| = 36$ or $|G/G'| = 36$, then Lemma~\ref{lem:subgroup} ensures that $\mathsf D (G) \ge 10$.

Suppose that $|G'| \in \{ 216, 144, 108, 72, 54, 48 \}$.
If $G'$ is non-abelian, then {\bf SUBCASES 3.3}--{\bf 3.1}, {\bf 2.2}--{\bf 2.1}, and {\bf CASE 1} ensure that $\mathsf D (G) \ge \mathsf D (G') \ge 10$.
If $G'$ is abelian, then in view of (\ref{eq:order}), $G'$ is isomorphic to $C^{3}_6$ (resp., $C^{2}_2 \times C^{2}_6$, or $C_3 \times C^{2}_6$, or $C_2 \times C^{2}_6$, or $C^{2}_3 \times C_6$, or $C^{3}_2 \times C_6$) if $|G'| = 216$ (resp., $|G'| = 144$, or $|G'| = 108$, or $|G'| = 72$, or $|G'| = 54$, or $|G'| = 48$).
Thus, it follows by Lemma~\ref{lem:ineqDC}.2 that $\mathsf D (G) \ge \mathsf d (G) + 1 \ge \big( \mathsf d (G') + \mathsf d (G/G') \big) + 1 \ge 10$.

Suppose that $|G/G'| \in \{ 216, 144, 108, 72, 54, 48 \}$.
If $G/G'$ has an element of order at least 9, then by Lemmas~\ref{lem:ineqDC}.3 and \ref{lem:order9}, $\mathsf D (G) \ge \mathsf D (G/G') \ge 10$, and so we may assume that the order of any element of $G/G'$ is at most 8.
Since $G/G'$ is abelian, the same argument as the one used in the paragraph just above ensures that $\mathsf D (G) \ge \mathsf d (G) + 1 \ge \big( \mathsf d (G') + \mathsf d (G/G') \big) + 1 \ge 10$.

If $|G/G'| = 27$, then $G/G'$ is isomorphic to $C_{27}$, or to $C_3 \times C_9$, or to $C^{3}_3$.
If $G'$ is abelian, then by (\ref{eq:order}), $G'$ is isomorphic to one of the groups in $\{ C^{2}_4, \, C^{2}_2 \times C_4, \, C_2 \times C_8, \, C^{4}_2 \}$, and it follows by Lemma~\ref{lem:ineqDC}.2 that $\mathsf D (G) \ge \mathsf d (G) + 1 \ge \big( \mathsf d (G') + \mathsf d (G/G') \big) + 1 \ge 11$.
If $G'$ is non-abelian, then we obtain that $\mathsf d (G') \ge 5$ by Lemma~\ref{lem:dD}, and thus it follows again by Lemma~\ref{lem:ineqDC}.2 that $\mathsf D (G) \ge \mathsf d (G) + 1 \ge \big( \mathsf d (G') + \mathsf d (G/G') \big) + 1 \ge 12$.

If $|G/G'| = 24$, then $G/G'$ is isomorphic to $C_{24}$, or to $C_2 \times C_{12}$, or to $C^{2}_2 \times C_6$.
If $G'$ is abelian, then by (\ref{eq:order}), $G' \cong C_3 \times C_6$, and it follows by Lemma~\ref{lem:ineqDC}.2 that $\mathsf D (G) \ge \mathsf d (G) + 1 \ge \big( \mathsf d (G') + \mathsf d (G/G') \big) + 1 \ge 15$.
If $G'$ is non-abelian, then it follows by Lemma~\ref{lem:dD} that $\mathsf D (G) \ge \mathsf D (G') \ge 10$.

If $|G/G'| = 18$, then $G/G'$ is isomorphic to $C_{18}$, or to $C_3 \times C_6$.
If $G'$ is abelian, then by (\ref{eq:order}), $G' \cong C^{2}_2 \times C_6$, and so $\mathsf d (G') = 7$ by Lemma~\ref{lem:abD}.(c).
If $G'$ is non-abelian, then it follows by Lemma~\ref{lem:dD} that $\mathsf d (G') \ge 6$.
In either case, it follows by Lemma~\ref{lem:ineqDC}.2 that $\mathsf D (G) \ge \mathsf d (G) + 1 \ge \big( \mathsf d (G') + \mathsf d (G/G') \big) + 1 \ge 14$.

If $|G/G'| = 16$, then $G/G'$ is isomorphic to one of the groups in $\{ C_{16}, \, C^{2}_4, \, C_2 \times C_8, \, C^{2}_2 \times C_4, \, C^{4}_2 \}$.
By (\ref{eq:order}), $G'$ is isomorphic to $C^{3}_3$, or to $H_{27}$, and so it follows by Lemmas~\ref{lem:ineqDC}.2 and \ref{lem:dD} that $\mathsf D (G) \ge \mathsf d (G) + 1 \ge \big( \mathsf d (G') + \mathsf d (G/G') \big) + 1 \ge 11$.

\smallskip
\noindent
{\bf CASE 4.} $j = 0$, $k = 1$, and $\ell = 0$, i.e., $|G| = 2^{i} 5^{1}$.
\smallskip

Then, since $|G| > 42$, we only have the case where $i = 4$, and hence $|G| = 2^{4} 5^{1} = 80$.
By Lemma~\ref{lem:perfect}, every group of order 80 is not a perfect group, and so such a group has a non-trivial proper commutator subgroup.
The possible values of the pair $\big( |G'|, |G/G'| \big)$ are as follows:
\[
  \big( |G'|, |G/G'| \big) \,\, \in \,\, \big\{ (2,40), \, (4,20), \, (5,16), \, (8,10), \, (10,8), \, (16,5), \, (20,4), \, (40,2)  \big\} \,.
\]

If $|G'|, |G/G'| \in \{ 40, 20 \}$, then Lemma~\ref{lem:subgroup} ensures that $\mathsf D (G) \ge 10$.

If $|G/G'| = 16$, then $|G'| = 5$, and by Cauchy's Theorem, $G$ has a subgroup $K$ of order 2.
Since $G'$ intersects trivially with $K$, $L = G' K$ is a subgroup of $G$ of order 10, and by (\ref{eq:order}), we must have that $L \cong D_{10}$.
Thus, Lemma~\ref{lem:nabD} implies that $\mathsf D (G) \ge \mathsf D (L) = 10$.

If $|G/G'| = 10$, then Lemmas~\ref{lem:ineqDC}.3 and \ref{lem:order9} imply that $\mathsf D (G) \ge \mathsf D (G/G') \ge 10$.

If $|G/G'|=8$, then $|G'| = 10$, and by (\ref{eq:order}), we must have that $G' \cong D_{10}$.
Thus, Lemma~\ref{lem:nabD} ensures that $\mathsf D (G) \ge \mathsf D (G') = 10$.

If $|G/G'| = 5$, then $G/G' \cong C_5$, and since $|G'| = 16$, $G'$ is a unique Sylow 2-subgroup of $G$. whence $G'$ is isomorphic to one of the groups $H$ listed in {\bf CASE 1}.
If $G' \in \{ C_2 \times C_8, \, M_{16}, \, D_{16}, \, SD_{16}, \, Q_{16} \}$, then in view of {\bf CASE 1}, we obtain that $\mathsf D (G) \ge 10$.
Now let $K$ be a Sylow 5-subgroup of $G$.
Then, $G'$ intersects trivially with $K$, and so $G \cong G' \rtimes K$.
If $G' \in \{ C^{2}_2 \times C_4, \, C^{2}_4, \, C^{2}_2 \rtimes C_4, \, C_2 \times D_8, \, C_2 \times Q_8, \, (C_2 \times C_4) \rtimes C_2, \, C_4 \rtimes C_4 \}$, then in view of {\bf SUBCASES 1.2}--{\bf 1.8}, we note that $\Aut (G')$ has no subgroup of order 5.
This means that, for any homomorphism $\varphi$ from $K$ to $\Aut (G')$, $\varphi [K]$ is a trivial subgroup of $\Aut (G')$, and thus we infer that $G \cong G' \rtimes K \cong G' \times K$.
Hence, $G$ must have an element of order 10, a contradiction to (\ref{eq:order}).
Suppose now that $G' \cong C^{4}_2$.
Then, $\Aut (G') \cong \GL (4,2)$ and $|\Aut (G')| = 2^{6} 3^{2} 5^{1} 7^{1}$.
Let $A \in \GL (4,2)$ be such that $A^{5} = I$.
Then, the minimal polynomial of $A$ divides $x^{5} - 1$, and $x^{5}-1 = (x-1)(x^{4} + x^{3} + x^{2} + x + 1) \in \mathbb Z_2 [x]$ (note that $x^{4} + x^{3} + x^{2} + x + 1$ is irreducible over $\mathbb Z_2$).
Under the constraints of the rational canonical form, we obtain the following permissible lists of invariant factors:
\begin{enumerate}
\item[(i)] $(x-1)$, $(x-1)$, $(x-1)$, $(x-1)$

\smallskip
\item[(ii)] $(x^{4} + x^{3} + x^{2} + x + 1)$
\end{enumerate}
So, (i) corresponds to the identity matrix in $\GL (4,2)$, and (ii) corresponds to the matrix $B = \begin{bmatrix} 0 & 0 & 0 & 1 \\ 1 & 0 & 0 & 1 \\ 0 & 1 & 0 & 1 \\ 0 & 0 & 1 & 1 \end{bmatrix}$.
Let $G' \cong C^{4}_2 \cong \langle a \rangle \times \langle b \rangle \times \langle c \rangle \times \langle d \rangle$, $K \cong C_5 \cong \langle e \rangle$, and $\varphi \colon K \to \Aut (G')$ be a homomorphism induced from the action of $K$ on $G'$ by conjugation.
Since $G$ is non-abelian, $\varphi [K] \le \Aut (G')$ is a non-trivial subgroup of order 5, and thus $\varphi [K] \cong \langle B \rangle$.
Hence, $G \cong \langle a, b, c, d, e \mid a^{2} = b^{2} = c^{2} = d^{2} = e^{5} = 1_G, \mbox{ and } ab = ba, ac = ca, ad = da, bc = cb, bd = db, cd = dc, eae^{-1} = d, ebe^{-1} = ad, ece^{-1} = bd, ede^{-1} = cd \rangle$.
We now consider the sequence
\[
  S = a^{[2]} \bdot b \bdot c \bdot (bd) \bdot e^{[5]} \in \mathcal F (G) \,.
\]
Note that $1_G = e a b c e e e e a (bd)$, so that $S$ is a product-one sequence.
Suppose that $S = S_1 \bdot S_2$ for some $S_1, S_2 \in \mathcal B (G)$ with $e \mid S_1$.
Since $e$ is independent with $a, b, c, d$ in $G$ and $\ord (e) = 5$, it follows that $e^{[5]} \mid S_1$, so that $S_2 \mid a^{[2]} \bdot b \bdot c \bdot (bd)$.
If $S_2$ is non-trivial, then, in view of $S_2$ as a product-one sequence over $C^{4}_2$, we only have that $S_2 = a^{[2]}$, so that $S_1 = b \bdot c \bdot (bd) \bdot e^{[5]}$.
Since $S_1$ is product-one, we infer that $bd \in \pi \big( S_1 \bdot (bd)^{[-1]} \big) = \pi (b \bdot c \bdot e^{[5]})$.
Let $T^{*} = g_1 \bdot \ldots \bdot g_7$ be the ordered sequence of $b \bdot c \bdot e^{[5]}$ such that $\pi^{*} (T^{*}) = g_1 \cdots g_7 = bd$.
If $g_1$ is $b$ (resp., $c$), then $d$ (resp., $bcd$) belongs to $\pi (c \bdot e^{[5]}) = \{ c, bd, ac, b, ad \}$ (resp., $\pi (b \bdot e^{[5]}) = \{ b, ad, c, bd, ac \}$), a contradiction.
Thus, we must have that $g_1 = e$, and we also have by symmetry that $g_7 = e$.
Since $e g_2 \cdots g_6 e = bd$, it follows by the relations of generators that $e^{2} g_2 \cdots g_6 = ac$.
If $g_6$ is $c$ (resp., $b$), then $a$ (resp., $abc$) belongs to $\pi (b \bdot e^{[5]})$ (resp., $\pi (c \bdot e^{[5]}))$, a contradiction.
Hence, $g_6 = e$, and we obtain that $e^{3} g_2 \cdots g_5 = b$.
If $g_5$ is $b$ (resp., $c$), then $1_G$ (resp., $bc$) belongs to $\pi (c \bdot e^{[5]})$ (resp., $\pi (b \bdot e^{[5]}))$, again a contradiction.
Hence, $g_5 = e$, and we obtain that $e^{4}g_2 g_3 g_4 = ad$.
But, $ce = ead = g_2 g_3 g_4 \in \pi (b \bdot c \bdot e) = \{ bce, bde, acde, abe \}$, a contradiction.
Therefore, $S_2$ must be a trivial sequence, and so $S = S_1$ is a minimal product-one sequence, whence $\mathsf D (G) \ge |S| = 10$.

\smallskip
\noindent
{\bf CASE 5.} $j = k = 1$ and $\ell = 0$, i.e., $|G| = 2^{i} 3^{1} 5^{1}$.
\smallskip

Then, since $|G| > 42$, it follows that $i \in [2,4]$.

\smallskip
\noindent
{\bf SUBCASE 5.1.} $i = 2$, i.e., $|G| = 2^{2} 3^{1} 5^{1} = 60$.
\smallskip

By Lemma~\ref{lem:perfect}, there exists a perfect group of order 60.
If $G$ is not a perfect group, then by \cite[Proposition 4.21]{Du-Fo04}, $G$ has a unique Sylow 5-subgroup $P$.
By Cauchy's Theorem, $G$ has a subgroup $H$ of order 3.
Since $P$ intersects trivially with $H$, it follows that $G$ has a subgroup $PH$ of order 15, which must be isomorphic to $C_{15}$, a contradiction to (\ref{eq:order}).
Thus, $G$ must be a perfect group, i.e., $G \cong A_5$.
Then, $G$ has a subgroup $H$ isomorphic to $D_{10}$, and by Lemma~\ref{lem:nabD}, $\mathsf D (G) \ge \mathsf D (H) = 10$.

\smallskip
\noindent
{\bf SUBCASE 5.2.} $i = 3$, i.e., $|G| = 2^{3} 3^{1} 5^{1} = 120$.
\smallskip

By Lemma~\ref{lem:perfect}, there exists a perfect group of order 120, and thus if $G$ is such a perfect group of order 120, then $G$ has a normal subgroup $H$ of order 2 such that $G/H \cong A_5$.
Let $\varphi \colon G \to G/H \cong A_5$ be the canonical epimorphism.
Since $A_5$ has a subgroup isomorphic to $D_{10}$, we infer that $G/H$ has a subgroup $K/H$ isomorphic to $D_{10}$, where $K$ is a subgroup of $G$ with $H \subseteq K$.
Thus, $K$ is a (non-abelian) subgroup of order $|K| = |D_{10}| |H| = 20$, whence it follows by Lemma~\ref{lem:dD} that $\mathsf D (G) \ge \mathsf D (K) \ge 10$.

Suppose now that $G$ is not a perfect group, so that $G$ has a non-trivial proper subgroup $G'$.
The possible values of the pair $\big( |G'|, |G/G'| \big)$ are as follows:
\[
  \begin{aligned}
    \big( |G'|, |G/G'| \big) \,\, \in & \,\, \big\{ (2,60), \, (3,40), \, (4,30), \, (5,24), \, (6,20), \, (8,15), \, (10,12), \, (12,10), \, (15,8), \, (20,6), \\
                                       & \quad (24,5), \,(30,4), \, (40, 3), \, (60,2)  \big\} \,.
  \end{aligned}
\]

If $|G'|, |G/G'| \in \{ 40, 30, 20, 15, 10 \}$, then Lemma~\ref{lem:subgroup} ensures that $\mathsf D (G) \ge 10$.

If $|G'| = 60$, then by (\ref{eq:order}), $G'$ must be non-abelian, and hence {\bf SUBCASE 5.1} ensures that $\mathsf D (G) \ge \mathsf D (G') \ge 10$.

If $|G/G'| = 60$, then since $G/G'$ is abelian, $G/G'$ has an element of order at least 10, and hence Lemmas~\ref{lem:ineqDC}.3 and \ref{lem:order9} imply that $\mathsf D (G) \ge \mathsf D (G/G') \ge 10$.

If $|G/G'| = 24$, then  $G/G'$ is isomorphic to $C_{24}$, or to $C_2 \times C_{12}$, or to $C^{2}_2 \times C_6$.
Since $|G'| = 5$, $G' \cong C_5$, and it follows by Lemma~\ref{lem:ineqDC}.2 that $\mathsf D (G) \ge \mathsf d (G) + 1 \ge \big( \mathsf d (G') + \mathsf d (G/G') \big) + 1 \ge 12$.

If $|G/G'| = 5$, then $G/G' \cong C_5$ and $|G'| = 24$.
If $G'$ is non-abelian, then it follows by Lemma~\ref{lem:dD} that $\mathsf D (G) \ge \mathsf D (G') \ge 10$.
If $G'$ is abelian, then by (\ref{eq:order}), we must have that $G' \cong C^{2}_2 \times C_6$.
Thus, it follows by Lemma~\ref{lem:ineqDC}.2 that $\mathsf D (G) \ge \mathsf d (G) + 1 \ge \big( \mathsf d (G') + \mathsf d (G/G') \big) + 1 \ge 12$.

\smallskip
\noindent
{\bf SUBCASE 5.3.} $i = 4$, i.e., $|G| = 2^{4} 3^{1} 5^{1} = 240$.
\smallskip

By Lemma~\ref{lem:perfect}, every group of order 240 is not a perfect group, and so such a group has a non-trivial proper commutator subgroup.
The possible values of the pair $\big( |G'|, |G/G'| \big)$ are as follows:
\[
  \begin{aligned}
    \big( |G'|, |G/G'| \big) \,\, \in & \,\, \big\{ (2,120), \, (3,80), \, (4, 60), \, (5, 48), \, (6, 40), \, (8,30), \, (10, 24), \, (12, 20), \, (15, 16), \, (16,15),   \\
                                       & \quad (20,12), \, (24,10), \, (30,8), \, (40,6), \, (48,5), \, (60, 4), \, (80,3), \, (120,2) \big\} \,.
  \end{aligned}
\]

If $|G'|, |G/G'| \in \{ 40, 30, 20, 15, 10 \}$, then Lemma~\ref{lem:subgroup} ensures that $\mathsf D (G) \ge 10$.

Suppose that $|G'| \in \{ 120, 80, 60, 48 \}$.
If $G'$ is non-abelian, then {\bf SUBCASES 5.2}--{\bf 5.1}, and {\bf CASES 4}, {\bf 1} ensure that $\mathsf D (G) \ge \mathsf D (G') \ge 10$.
If $G'$ is abelian, then by (\ref{eq:order}), we only have that $G' \cong C^{3}_2 \times C_6$ with $|G'| = 48$. Since $|G/G'| = 5$, $G/G' \cong C_5$, and thus it follows by Lemma~\ref{lem:ineqDC}.2 that $\mathsf D (G) \ge \mathsf d (G) + 1 \ge \big( \mathsf d (G') + \mathsf d (G/G') \big) + 1 \ge 13$.

Suppose that $|G/G'| \in \{ 120, 80, 60, 48 \}$.
If $G/G'$ has an element of order at least 9, then by Lemmas~\ref{lem:ineqDC}.3 and \ref{lem:order9} ensure that $\mathsf D (G) \ge \mathsf D (G/G') \ge 10$.
So, we assume that every element if $G/G'$ has the order at most 8.
Since $G/G'$ is abelian, we only have that $G/G' \cong C^{3}_2 \times C_6$ with $|G/G'| = 48$. Since $|G'| = 5$, $G' \cong C_5$, and it follows by Lemma~\ref{lem:ineqDC}.2 that $\mathsf D (G) \ge \mathsf d (G) + 1 \ge \big( \mathsf d (G') + \mathsf d (G/G') \big) + 1 \ge 13$.

\smallskip
\noindent
{\bf CASE 6.} $j = 2$, $k = 1$, and $\ell = 0$, i.e., $|G| = 2^{i} 3^{2} 5^{1}$.
\smallskip

If $i = 0$, then $|G| = 3^{2} 5^{1} = 45$, and by Sylow's Theorem, we infer that $G$ has a unique Sylow 3-subgroup $P$ and a unique Sylow 5-subgroup $Q$.
Since $P$ intersects trivially with $Q$, it follows that $G \cong P \rtimes Q \cong P \times Q$.
Note that both $P$ and $Q$ are abelian, so that $G$ is also abelian.
Thus, it follows that $i \in [1,4]$.

\smallskip
\noindent
{\bf SUBCASE 6.1.} $i = 1$, i.e., $|G| = 2^{1} 3^{2} 5^{1} = 90$.
\smallskip

By Lemma~\ref{lem:perfect}, every group of order 90 is not a perfect group, and so such a group has a non-trivial proper commutator subgroup.
The possible values of the pair $\big( |G'|, |G/G'| \big)$ are as follows:
\[
  \big( |G'|, |G/G'| \big) \,\, \in \,\, \big\{ (2,45), \, (3, 30), \, (5, 18), \, (6,15), \, (9,10), \, (10,9), \, (15, 6), \, (18, 5), \, (30,3), \, (45,2) \big\} \,.
\]

If $|G'|, |G/G'| \in \{ 45, 30, 15, 10 \}$, then Lemma~\ref{lem:subgroup} ensures that $\mathsf D (G) \ge 10$.

If $|G/G'| = 18$, then $G/G'$ is isomorphic to $C_{18}$, or to $C_3 \times C_6$.
Since $|G'| = 5$, $G' \cong C_5$, and it follows by Lemma~\ref{lem:ineqDC}.2 that $\mathsf D (G) \ge \mathsf d (G) + 1 \ge \big( \mathsf d (G') + \mathsf d (G/G') \big) + 1 \ge 12$.

If $|G/G'| = 5$, then $|G'| = 18$.
If $G'$ is non-abelian, then Lemmas~\ref{lem:dD} that $\mathsf D (G) \ge \mathsf D (G') \ge 10$.
If $G'$ is abelian, then by (\ref{eq:order}), we only have that $G' \cong C_3 \times C_6$, and it follows by Lemma~\ref{lem:ineqDC}.2 that $\mathsf D (G) \ge \mathsf d (G) + 1 \ge  \big( \mathsf d (G') + \mathsf d (G/G') \big) + 1 \ge 13$.

\smallskip
\noindent
{\bf SUBCASE 6.2.} $i = 2$, i.e., $|G| = 2^{2} 3^{2} 5^{1} = 180$.
\smallskip

By Lemma~\ref{lem:perfect}, every group of order 180 is not a perfect group, and so such a group has a non-trivial proper commutator subgroup.
The possible values of the pair $\big( |G'|, |G/G'| \big)$ are as follows:
\[
  \begin{aligned}
    \big( |G'|, |G/G'| \big) \,\, \in & \,\, \big\{ (2,90), \, (3, 60), \, (4,45), \, (5, 36), \, (6,30), \, (9,20), \, (10,18), \, (12,15), \, (15, 12), \, (18,10) \\
                                       & \quad (20,9), \, (30,6), \, (36, 5), \, (45,4), \, (60,3), \, (90,2) \big\} \,.
  \end{aligned}
\]

If $|G'|, |G/G'| \in \{ 45, 36, 30, 20, 15, 10 \}$, then Lemma~\ref{lem:subgroup} ensures that $\mathsf D (G) \ge 10$.

Suppose that $|G'| \in \{ 90, 60 \}$.
If $G'$ is abelian, then it is easy to see that $G'$ has an element of order at least 10, a contradiction to (\ref{eq:order}).
Thus, $G'$ is non-abelian, and hence {\bf SUBCASES 6.1} and {\bf 5.1} ensure that $\mathsf D (G) \ge \mathsf D (G') \ge 10$.

Suppose that $|G/G'| \in \{ 90, 60 \}$.
Then, it is east to see that $G/G'$ has an element of order at least 10, and hence Lemmas~\ref{lem:ineqDC}.3 and \ref{lem:order9} imply that $\mathsf D (G) \ge \mathsf D (G/G') \ge 10$.

\smallskip
\noindent
{\bf SUBCASE 6.3.} $i = 3$, i.e., $|G| = 2^{3} 3^{2} 5^{1} = 360$.
\smallskip

By Lemma~\ref{lem:perfect}, there exists a perfect group of order 360, and thus if $G$ is such a perfect group of order 360, then we must have that $G \cong A_6$.
Hence, $G$ has a subgroup $H$ isomorphic to $A_5$, and thus it follows by {\bf SUBCASE 5.1} that $\mathsf D (G) \ge \mathsf D (A_5) \ge 10$.
Except in the case where $G \cong A_6$, we infer that $G$ has a non-trivial proper subgroup $G'$.
The possible values of the pair $\big( |G'|, |G/G'| \big)$ are as follows:
\[
  \begin{aligned}
    \big( |G'|, |G/G'| \big) \,\, \in & \,\, \big\{ (2,180), \, (3, 120), \, (4,90), \, (5,72), \, (6,60), \, (8,45), \, (9,40), \, (10,36), \, (12,30), \, (15,24), \\
                                       & \quad (18, 20), \, (20,18), \, (24,15), \, (30,12), \, (36, 10), \, (40,9), \, (45,8), \, (60,6), \, (72,5), \, (90,4), \\
                                       & \quad (120,3), \, (180, 2) \big\} \,.
  \end{aligned}
\]

If $|G'|, |G/G'| \in \{ 45, 40, 36, 30, 20, 15, 10 \}$, then Lemma~\ref{lem:subgroup} ensures that $\mathsf D (G) \ge 10$.

Suppose that $|G'| \in  \{ 180, 120, 90, 72, 60 \}$.
If $G'$ is non-abelian, then {\bf SUBCASES 6.2}--{\bf 6.1}, {\bf 5.2}--{\bf 5.1}, and {\bf 2.1} ensure that $\mathsf D (G) \ge \mathsf D (G') \ge 10$.
If $G'$ is abelian, then by (\ref{eq:order}), we must have that $G' \cong C_2 \times C^{2}_6$ with $|G'| = 72$, and hence $\mathsf D (G) \ge \mathsf D (G') \ge 12$.

Suppose that $|G/G'| \in \{ 180, 120, 90, 72, 60 \}$.
If $G/G' \cong C_2 \times C^{2}_6$ with $|G/G'| = 72$, then $\mathsf D (G) \ge 12$.
Otherwise, we infer that $G/G'$ has an element of order at least 10.
In either case, it follows by Lemmas~\ref{lem:ineqDC}.3 and \ref{lem:order9} that $\mathsf D (G) \ge \mathsf D (G/G') \ge 10$.

\smallskip
\noindent
{\bf SUBCASE 6.4.} $i = 4$, i.e., $|G| = 2^{4} 3^{2} 5^{1} = 720$.
\smallskip

By Lemma~\ref{lem:perfect}, there exists a perfect group of order 720, and thus if $G$ is such a perfect group of order 720, then $G$ has a normal subgroup $H$ of order 2 such that $G/H \cong A_6$.
Let $\varphi \colon G \to G/H \cong A_6$ be the canonical epimorphism.
Since $A_6$ has a subgroup isomorphic to $A_5$, we infer that $G/H$ has a subgroup $K/H$ isomorphic to $A_5$, where $K$ is a subgroup of $G$ with $H \subseteq K$.
Thus, $K$ is a (non-abelian) subgroup of order $|K| = |A_5| |H| = 120$, whence it follows by {\bf SUBCASE 5.2} that $\mathsf D (G) \ge \mathsf D (K) \ge 10$.
Suppose now that $G$ is not a perfect group, so that $G$ has a non-trivial proper subgroup $G'$.
The possible values of the pair $\big( |G'|, |G/G'| \big)$ are as follows:
\[
  \begin{aligned}
    \big( |G'|, |G/G'| \big) \,\, \in & \,\, \big\{ (2,360), \, (3,240), \, (4,180), \, (5,144), \, (6,120), \, (8,90), \, (9,80), \, (10,72), \, (12,60), \, (15,48), \\
                                       & \quad (16,45), \, (18,40), \, (20,36), \, (24,30), \, (30,24), \, (36,20), \, (40,18), \, (45,16), \, (48,15), \\
                                       & \quad (60,12), \, (72,10), \, (80,9), \, (90,8), \, (120, 6), \, (144,5), \, (180,4), \, (240,3), \, (360,2) \big\} \,.
  \end{aligned}
\]

If $|G'|, |G/G'| \in \{ 45, 40, 36, 30, 20, 15, 10 \}$, then Lemma~\ref{lem:subgroup} ensures that $\mathsf D (G) \ge 10$.

Suppose that $|G'| \in \{ 360, 240, 180, 144, 120, 90, 80, 60 \}$.
If $G'$ is non-abelian, then {\bf SUBCASES 6.3}--{\bf 6.1}, {\bf 5.3}--{\bf 5.1}, {\bf 2.2}, and {\bf CASE 4} ensure that $\mathsf D (G) \ge \mathsf D (G') \ge 10$.
If $G'$ is abelian, then by (\ref{eq:order}), we only have that $G' \cong C^{2}_2 \times C^{2}_6$ with $|G'| = 144$, whence $\mathsf D (G) \ge \mathsf D (G') \ge 13$.

Suppose that $|G/G'| \in \{ 360, 240, 180, 144, 120, 90, 80, 60 \}$.
If $G/G' \cong C^{2}_2 \times C^{2}_6$ with $|G/G'| = 144$, then $\mathsf D (G/G') \ge 13$.
Otherwise, $G/G'$ has an element of order at least 10.
In either case, it follows by Lemmas~\ref{lem:ineqDC}.3 and \ref{lem:order9} imply that $\mathsf D (G) \ge \mathsf D (G/G') \ge 10$.

\smallskip
\noindent
{\bf CASE 7.} $j = 3$, $k = 1$, and $\ell = 0$, i.e., $|G| = 2^{i} 3^{3} 5^{1}$.
\smallskip

Then, since $|G| > 42$, it follows that $i \in [0,4]$.

\smallskip
\noindent
{\bf SUBCASE 7.1.} $i = 0$, i.e., $|G| = 3^{3} 5^{1} = 135$.
\smallskip

By Sylow's Theorem, we infer that $G$ has a unique Sylow 3-subgroup $P$ and a unique Sylow 5-subgroup $Q$.
Since $P$ intersects trivially with $Q$, it follows that $G \cong P \rtimes Q \cong P \times Q$.
Since $Q \cong C_5$ and $G$ is non-abelian, (\ref{eq:order}) implies that $P \cong H_{27}$, so that $G \cong C_5 \times H_{27}$.
Thus, it follows by Lemmas~\ref{lem:ineqDC}.2 and \ref{lem:dD} that $\mathsf D (G) \ge \mathsf d (G) + 1 \ge \big( \mathsf d (P) + \mathsf d (G/P) \big) + 1 = 11$.

\smallskip
\noindent
{\bf SUBCASE 7.2.} $i = 1$, i.e., $|G| = 2^{1} 3^{3} 5^{1} = 270$.
\smallskip

By Lemma~\ref{lem:perfect}, every group of order 270 is not a perfect group, and so such a group has a non-trivial proper subgroup $G'$.
The possible values of the pair $\big( |G'|, |G/G'| \big)$ are as follows:
\[
  \begin{aligned}
    \big( |G'|, |G/G'| \big) \,\, \in & \,\, \big\{ (2,135), \, (3, 90), \, (5, 54), \, (6,45), \, (9,30), \, (10,27), \, (15, 18), \, (18,15), \, (27,10), \\
                                       & \quad (30,9), \, (45,6), \, (54,5), \, (90,3), \, (135,2) \big\} \,.
  \end{aligned}
\]

If $|G'|, |G/G'| \in \{ 45, 30, 15, 10 \}$, then Lemma~\ref{lem:subgroup} ensures that $\mathsf D (G) \ge 10$.

Suppose that $|G'| \in \{ 135, 90, 54 \}$.
If $G'$ is non-abelian, then {\bf SUBCASES 7.1}, {\bf 6.1}, and {\bf 3.1} ensure that $\mathsf D (G) \ge \mathsf D (G') \ge 10$.
If $G'$ is abelian, then by (\ref{eq:order}), we only have that $G' \cong C^{2}_3 \times C_6$ with $|G'| = 54$, whence $\mathsf D (G) \ge \mathsf D (G') \ge 10$.

Suppose that $|G/G'| \in \{ 135, 90, 54 \}$.
If $G/G' \cong C^{2}_3 \times C_6$ with $|G/G'| = 54$, then $\mathsf D (G/G') \ge 10$.
Otherwise, we infer that $G/G'$ has an element of order at least 10.
In either case, it follows by Lemmas~\ref{lem:ineqDC}.3 and \ref{lem:order9} that $\mathsf D (G) \ge \mathsf D (G/G') \ge 10$.

\smallskip
\noindent
{\bf SUBCASE 7.3.} $i = 2$, i.e., $|G| = 2^{2} 3^{3} 5^{1} = 540$.
\smallskip

By Lemma~\ref{lem:perfect}, every group of order 540 is not a perfect group, and so such a group has a non-trivial proper subgroup $G'$.
The possible values of the pair $\big( |G'|, |G/G'| \big)$ are as follows:
\[
  \begin{aligned}
    \big( |G'|, |G/G'| \big) \,\, \in & \,\, \big\{ (2,270), \, (3,180), \, (4,135), \, (5,108), \, (6,90), \, (9,60), \, (10,54), \, (12,45), \, (15, 36), \, (18,30), \\
                                       & \quad (20,27), \, (27,20), \, (30,18), \, (36,15), \, (45,12), \, (54,10), \, (60,9), \, (90,6), \, (108,5), \\
                                       & \quad (135,4), \, (180,3), \, (270,2) \big\} \,.
  \end{aligned}
\]

If $|G'|, |G/G'| \in \{ 45, 36, 30, 20, 15, 10 \}$, then Lemma~\ref{lem:subgroup} ensures that $\mathsf D (G) \ge 10$.

Suppose that $|G'| \in \{ 270, 180, 135, 108, 90, 60 \}$.
If $G'$ is non-abelian, then {\bf SUBCASES 7.2}--{\bf 7.1}, {\bf 6.2}--{\bf 6.1}, {\bf 5.1}, and {\bf 3.2} ensure that $\mathsf D (G) \ge \mathsf D (G') \ge 10$.
If $G'$ is abelian, then by (\ref{eq:order}), we only have that $G' \cong C_3 \times C^{2}_6$ with $|G'| = 108$, whence $\mathsf D (G) \ge \mathsf D (G') \ge 13$.

Suppose that $|G/G'| \in \{ 270, 180, 135, 108, 90, 60 \}$.
If $G/G' \cong C_3 \times C^{2}_6$ with $|G/G'| = 108$, then $\mathsf D (G/G') \ge 13$.
Otherwise, we infer that $G/G'$ has an elements of order at least 10.
In either case, it follows by Lemmas~\ref{lem:ineqDC}.3 and \ref{lem:order9} that $\mathsf D (G) \ge \mathsf D (G/G') \ge 10$.

\smallskip
\noindent
{\bf SUBCASE 7.4.} $i = 3$, i.e., $|G| = 2^{3} 3^{3} 5^{1} = 1080$.
\smallskip

By Lemma~\ref{lem:perfect}, there exists a perfect group of order 1080, and thus if $G$ is such a perfect group of order 1080, then $G$ has a subgroup $H$ of order 3 such that $G/H \cong A_6$.
Let $\varphi \colon G \to G/H \cong A_6$ be the canonical epimorphism.
Since $A_6$ has a subgroup isomorphic to $A_5$, we infer that $G/H$ has a subgroup $K/H$ isomorphic to $A_5$, where $K$ is a subgroup of $G$ with $H \subseteq K$.
Thus, $K$ is a (non-abelian) subgroup of order $|K| = |A_5||H| = 180$, whence it follows by {\bf SUBCASE 6.2} that $\mathsf D (G) \ge \mathsf D (K) \ge 10$.
Suppose now that $G$ is not a perfect group, so that $G$ has a non-trivial proper subgroup $G'$.
The possible values of the pair $\big( |G'|, |G/G'| \big)$ are as follows:
\[
  \begin{aligned}
    \big( |G'|, |G/G'| \big) \,\, \in & \,\, \big\{ (2,540), \, (3,360), \, (4,270), \, (5,216), \, (6,180), \, (8,135), \, (9,120), \, (10,108), \, (12,90), \\
                                       & \quad (15,72), \, (18,60), \, (20,54), \, (24,45), \, (27,40), \, (30,36), \, (36,30), \, (40,27), \, (45,24), \\
                                       & \quad (54,20), \, (60,18), \, (72,15), \, (90,12), \, (108,10), \, (120,9), \, (135,8), \, (180,6), \, (216,5), \\
                                       & \quad (270,4), \, (360,3), \, (540,2) \big\} \,.
  \end{aligned}
\]

If $|G'|, |G/G'| \in \{ 45, 40, 36, 30, 20, 15, 10 \}$, then Lemma~\ref{lem:subgroup} ensures that $\mathsf D (G) \ge 10$.

Suppose that $|G'| \in \{ 540, 360, 270, 216, 180, 135, 120, 90, 60 \}$.
If $G'$ is non-abelian, then {\bf SUBCASES 7.3}--{\bf 7.1}, {\bf 6.3}--{\bf 6.1}, {\bf 5.2}--{\bf 5.1}, and {\bf 3.3} ensure that $\mathsf D (G) \ge \mathsf D (G') \ge 10$.
If $G'$ is abelian, then by (\ref{eq:order}), we only have that $G' \cong C^{3}_6$ with $|G'| = 216$, whence $\mathsf D (G) \ge \mathsf D (G') \ge 16$.

Suppose that $|G/G'| \in \{ 540, 360, 270, 216, 180, 135, 120, 90, 60 \}$.
If $G/G' \cong C^{3}_6$ with $|G/G'| = 216$, then $\mathsf D (G/G') \ge 16$.
Otherwise, we infer that $G/G'$ has an element of order at least 10.
In either case, it follows by Lemmas~\ref{lem:ineqDC}.3 and \ref{lem:order9} that $\mathsf D (G) \ge \mathsf D (G/G') \ge 10$.

\smallskip
\noindent
{\bf SUBCASE 7.5.} $i = 4$, i.e., $|G| = 2^{4} 3^{3} 5^{1} = 2160$.
\smallskip

By Lemma~\ref{lem:perfect}, there exists a perfect group of order 2160, and thus if $G$ is such a perfect group of order 2160, then $G$ has a normal cyclic subgroup $H$ of order 6 such that $G/H \cong A_6$.
Let $\varphi \colon G \to G/H \cong A_6$ be the canonical epimorphism.
Since $A_6$ has a subgroup isomorphic to $A_5$, we infer that $G/H$ has a subgroup $K/H$ isomorphic to $A_5$, where $K$ is a subgroup of $G$ with $H \subseteq K$.
Thus, $K$ is a (non-abelian) subgroup of order $|K| = |A_5||H| = 360$, whence {\bf SUBCASE 6.3} ensures that $\mathsf D (G) \ge \mathsf D (K) \ge 10$.
Suppose now that $G$ is not a perfect group, so that $G$ has a non-trivial proper subgroup $G'$.
The possible values of the pair $\big( |G'|, |G/G'| \big)$ are as follows:
\[
  \begin{aligned}
    \big( |G'|, |G/G'| \big) \,\, \in & \,\, \big\{ (2,1080), \, (3,720), \, (4,540), \, (5,432), \, (6,360), \, (8,270), \, (9,240), \, (10,216), \, (12,180), \\
                                       & \quad (15,144), \, (16,135), \, (18,120), \, (20,108), \, (24,90), \, (27,80), \, (30,72), \, (36,60), \, (40,54),  \\
                                       & \quad (45,48), \, (48,45), \, (54,40), \, (60,36), \, (72,30), \, (80,27), \, (90,24), \, (108,20), \, (120,18), \\
                                       & \quad (135,16), \, (144,15), \, (180,12), \, (216,10), \, (240,9), \, (270,8), \, (360,6), \, (432,5), \, (540,4), \\
                                       & \quad (720,3), \, (1080,2) \big\} \,.
  \end{aligned}
\]

If $|G'|, |G/G'| \in \{ 45, 40, 36, 30, 20, 15, 10 \}$, then Lemma~\ref{lem:subgroup} ensures that $\mathsf D (G) \ge 10$.

Suppose that $|G'| \in \{ 1080, 720, 540, 432, 360, 270, 240, 180, 135, 120, 90, 80 \}$.
If $G'$ is non-abelian, then {\bf SUBCASES 7.4}--{\bf 7.1}, {\bf 6.4}--{\bf 6.1}, {\bf 5.3}--{\bf 5.2}, {\bf 3.4}, and {\bf CASE 4},  ensure that $\mathsf D (G) \ge \mathsf D (G') \ge 10$.
If $G'$ is abelian, then by (\ref{eq:order}), we only have that $G' \cong C_2 \times C^{3}_6$ with $|G'| = 432$, whence $\mathsf D (G) \ge \mathsf D (G') \ge 17$.

Suppose that $|G/G'| \in \{ 1080, 720, 540, 432, 360, 270, 240, 180, 135, 120, 90, 80 \}$.
If $G/G' \cong C_2 \times C^{3}_6$ with $|G/G'| = 432$, then $\mathsf D (G/G') \ge 17$.
Otherwise, we infer that $G/G'$ has an element of order at least 10.
In either case, it follows by Lemmas~\ref{lem:ineqDC}.3 and \ref{lem:order9} that $\mathsf D (G) \ge \mathsf D (G/G') \ge 10$.

\smallskip
\noindent
{\bf CASE 8.} $\ell = 1$ and $k = j = 0$, i.e., $|G| = 2^{i} 7^{1}$.
\smallskip

Then, since $|G| > 42$, it follows that $i \in [3,4]$.

\smallskip
\noindent
{\bf SUBCASE 8.1.} $i = 3$, i.e., $|G| = 2^{3} 7^{1} = 56$.
\smallskip

By Sylow's Theorem, $n_7 \in \{ 1, 8 \}$ and $n_2 \in \{ 1, 7 \}$.
If $n_7 = 1$, then $G$ has a unique Sylow 7-subgroup, and so we infer that $G$ must have a subgroup $H$ of order 14. By (\ref{eq:order}), $H \cong D_{14}$, and it follows by Lemma~\ref{lem:nabD} that $\mathsf D (G) \ge \mathsf D (H) = 14$.
If $n_7 = 8$, then any two Sylow 7-subgroups have a trivial intersection, and so $G$ contains 48 distinct elements of order 7.
Since $|G| = 56$, remaining 8 elements of $G$ that are not of order 7 must be contained in a Sylow 2-subgroup of $G$, whence $G$ has a unique Sylow 2-subgroup $K$.
Note that $\mathsf d (K) \ge 3$ if $K$ is abelian, and $\mathsf d (K) = 4$ if $K$ is non-abelian (Lemma~\ref{lem:dD}).
Since $|G/K| = 7$, it follows by Lemma~\ref{lem:ineqDC}.2 that $\mathsf D (G) \ge \mathsf d (G) + 1 \ge \big( \mathsf d (K) + \mathsf d (G/K) \big) + 1 \ge 10$.

\smallskip
\noindent
{\bf SUBCASE 8.2.} $i = 4$, i.e., $|G| = 2^{4} 7^{1} = 112$.
\smallskip

By Lemma~\ref{lem:perfect}, every group of order 112 is not a perfect group, and so such a group has a non-trivial proper subgroup $G'$.
The possible values of the pair $\big( |G'|, |G/G'| \big)$ are as follows:
\[
   \big( |G'|, |G/G'| \big) \,\, \in \,\, \big\{ (2,56), \, (4, 28), \, (7, 16), \, (8,14), \, (14,8), \, (16,7), \, (28,4), \, (56,2) \big\} \,.
\]

If $|G'|, |G/G'| \in \{ 28, 14 \}$, then Lemma~\ref{lem:subgroup} ensures that $\mathsf D (G) \ge 10$.

If $|G'| = 56$, then by (\ref{eq:order}), we infer that $G'$ must be non-abelian, and thus {\bf SUBCASE 8.1} ensures that $\mathsf D (G) \ge \mathsf D (G') \ge 10$.

If $|G/G'| = 56$, then $G/G'$ has an element of order 14, and it follows by Lemma~\ref{lem:ineqDC}.3 that $\mathsf D (G) \ge \mathsf D (G/G') \ge 14$.

If $|G/G'| = 16$, then $G/G'$ is isomorphic to one of the groups in $\{ C_{16}, C^{2}_4, C_2 \times C_8, C^{2}_2 \times C_4, C^{4}_2 \}$.
Since $|G'| = 7$, $G' \cong C_7$, and it follows by Lemma~\ref{lem:ineqDC}.2 that $\mathsf D (G) \ge \mathsf d (G) + 1 \ge \big( \mathsf d (G') + \mathsf d (G/G') \big) + 1 \ge 11$.

Suppose that $|G/G'| = 7$.
Then $|G'| = 16$, and if $G'$ is abelian, then we can easily check that $\mathsf d (G') \ge 4$.
If $G'$ is non-abelian, then by Lemma~\ref{lem:dD}, $\mathsf d (G') \ge 5$.
In either case, since $G/G' \cong C_7$, it follows by Lemma~\ref{lem:ineqDC}.2 that $\mathsf D (G) \ge \mathsf d (G) + 1 \ge \big( \mathsf d (G') + \mathsf d (G/G') \big) + 1 \ge 11$.

\smallskip
\noindent
{\bf CASE 9.} $\ell = j = 1$ and $k = 0$, i.e., $|G| = 2^{i} 3^{1} 7^{1}$.
\smallskip

Then, since $|G| > 42$, it follows that $i \in [2,4]$.

\smallskip
\noindent
{\bf SUBCASE 9.1.} $i = 2$, i.e., $|G| = 2^{2} 3^{1} 7^{1} = 84$.
\smallskip

By Sylow's Theorem, we infer that $G$ has a unique Sylow 7-subgroup $H$.
If $G/H$ is abelian, then $G/H$ is isomorphic to $C_{12}$, or to $C_2 \times C_6$, and hence $\mathsf d (G/H) \ge 6$.
If $G/H$ is non-abelian, then $\mathsf d (G/H) \ge 4$ by Lemma~\ref{lem:dD}.
In either case, it follows by Lemma~\ref{lem:ineqDC}.2 that $\mathsf D (G) \ge \mathsf d (G) + 1 \ge \big( \mathsf d (H) + \mathsf d (G/H) \big) + 1 \ge 11$.

\smallskip
\noindent
{\bf SUBCASE 9.2.} $i = 3$, i.e., $|G| = 2^{3} 3^{1} 7^{1} = 168$.
\smallskip

By Lemma~\ref{lem:perfect}, there exists a perfect group of order 168, and thus if $G$ is a perfect group of order 168, then $G \cong \PSL (2,7) \cong \GL (3,2)$ (see \cite{Br-Lo09} for the last isomorphism), and consider two elements
\[
  A = \begin{bmatrix} 1 & 0 & 1 \\ 1 & 0 & 0 \\ 0 & 1 & 0 \end{bmatrix}, \quad B = \begin{bmatrix} 1 & 0 & 0 \\ 0 & 0 & 1 \\ 0 & 1 & 1 \end{bmatrix} \, \in \, \GL (3,2) \,.
\]
Then, $G$ has a subgroup $H \cong \langle A, B \mid A^{7} = B^{3} = I, \text{ and } BAB^{-1} = A^{4} \rangle \cong C_7 \rtimes C_3$, and hence it follows by Lemma~\ref{lem:dD} that $\mathsf D (G) \ge \mathsf D (H) = 14$.
Suppose now that $G$ is not a perfect group, so that $G$ has a non-trivial proper subgroup $G'$.
The possible values of the pair $\big( |G'|, |G/G'| \big)$ are as follows:
\[
  \begin{aligned}
    \big( |G'|, |G/G'| \big) \,\, \in & \,\, \big\{ (2,84), \, (3,56), \, (4,42), \, (6,28), \, (7,24), \, (8,21), \, (12,14), \, (14,12), \, (21,8), \, (24,7), \\
                                       & \quad (28,6), \, (42,4), \, (56,3), \, (84,2) \big\} \,.
  \end{aligned}
\]

If $|G'|, |G/G'| \in \{ 42, 28,  21, 14 \}$, then Lemma~\ref{lem:subgroup} ensures that $\mathsf D (G) \ge 10$.

Suppose that $|G'| \in \{ 84, 56 \}$.
Then, by (\ref{eq:order}), we infer that $G'$ must be non-abelian, and hence {\bf SUBCASES 9.1} and {\bf 8.1} ensure that $\mathsf D (G) \ge \mathsf D (G') \ge 10$.

Suppose that $|G/G'| \in \{ 84, 56 \}$.
Then, we infer that $G/G'$ has an element of order 14, and thus it follows by Lemmas~\ref{lem:ineqDC}.3 and \ref{lem:order9} that $\mathsf D (G) \ge \mathsf D (G/G') \ge 10$.

If $|G/G'| = 24$, then $G/G'$ is isomorphic to $C_{24}$, or to $C_2 \times C_{12}$, or to $C^{2}_2 \times C_6$.
Since $G' \cong C_7$, it follows by Lemma~\ref{lem:ineqDC}.2 that $\mathsf D (G) \ge \mathsf d (G) + 1 \ge \big( \mathsf d (G') + \mathsf d (G/G') \big) + 1 \ge 14$.

If $|G/G'| = 7$, then $G/G' \cong C_7$ and $|G'| = 24$.
If $G'$ is non-abelian, then it follows by Lemma~\ref{lem:dD} that $\mathsf D (G) \ge \mathsf D (G') \ge 10$.
If $G'$ is abelian, then by (\ref{eq:order}), we only have that $G' \cong C^{2}_2 \times C_6$, and thus it follows by Lemma~\ref{lem:ineqDC}.2 that $\mathsf D (G) \ge \mathsf d (G) + 1 \ge \big( \mathsf d (G') + \mathsf d (G/G') \big) + 1 = 14$.

\smallskip
\noindent
{\bf SUBCASE 9.3.} $i = 4$, i.e., $|G| = 2^{4} 3^{1} 7^{1} = 336$.
\smallskip

By Lemma~\ref{lem:perfect}, there exists a perfect group of order 336, and thus if $G$ is a perfect group of order 336, then $G$ has a normal subgroup $H$ of order 2 such that $G/H \cong \PSL (2,7) \cong \GL (3,2)$ (see \cite{Br-Lo09} for the last isomorphism).
Let $\varphi \colon G \to G/H \cong \GL (3,2)$ be the canonical epimorphism.
As we have already seen that $\GL (3,2)$ has a subgroup isomorphic to $C_7 \rtimes C_3$, we infer that $G/H$ has a subgroup $K/H$ isomorphic to $C_7 \rtimes C_3$, where $K$ is a subgroup of $G$ with $H \subseteq K$.
Thus, $K$ is a (non-abelian) subgroup of order $|K| = |C_7 \rtimes C_3||H| = 42$, and hence Lemma~\ref{lem:dD} ensures that $\mathsf D (G) \ge \mathsf D (K) \ge 10$.
Suppose now that $G$ is not a perfect group, so that $G$ has a non-trivial proper subgroup $G'$.
The possible values of the pair $\big( |G'|, |G/G'| \big)$ are as follows:
\[
  \begin{aligned}
    \big( |G'|, |G/G'| \big) \,\, \in & \,\, \big\{ (2,168), \, (3,112), \, (4,84), \, (6,56), \, (7,48), \, (8,42), \, (12,28), \, (14,24), \, (16,21), \, (21,16), \\
                                       & \quad (24,14), \, (28,12), \, (42,8), \, (48,7), \, (56,6), \, (84,4), \, (112,3), \, (168,2) \big\} \,.
  \end{aligned}
\]

If $|G'|, |G/G'| \in \{ 42, 28, 21, 14 \}$, then Lemma~\ref{lem:subgroup} ensures that $\mathsf D (G) \ge 10$.

Suppose that $|G'| \in \{ 168, 112, 84, 56, 48 \}$.
If $G'$ is non-abelian, then {\bf SUBCASES  9.2}--{\bf 9.1}, {\bf 8.2}--{\bf 8.1}, and {\bf CASE 1} ensure that $\mathsf D (G) \ge \mathsf D (G') \ge 10$.
If $G'$ is abelian, then by (\ref{eq:order}), we only have that $G' \cong C^{3}_2 \times C_6$ with $|G'| = 48$.
Thus, it follows by Lemma~\ref{lem:ineqDC}.1 that $\mathsf D (G) \ge \mathsf D (G') + 1 \ge 10$.

Suppose that $|G/G'| \in \{ 168, 112, 84, 56, 48 \}$.
If $G/G' \cong C^{3}_2 \times C_6$ with $|G/G'| = 48$, then since $G' \cong C_7$, it follows by Lemma~\ref{lem:ineqDC}.2 that $\mathsf D (G) \ge \mathsf d (G) + 1 \ge \big( \mathsf d (G') + \mathsf d (G/G') \big) + 1 \ge 15$.
Otherwise, $G/G'$ has an element of order at least 12, and hence Lemmas~\ref{lem:ineqDC}.3 and \ref{lem:order9} that $\mathsf D (G) \ge \mathsf D (G/G') \ge 10$.

\smallskip
\noindent
{\bf CASE 10.} $\ell = 1$, $k = 0$, and $j = 2$, i.e., $|G| = 2^{i} 3^{2} 7^{1}$.
\smallskip

Then, since $|G| > 42$, it follows that $i \in [0,4]$.

\smallskip
\noindent
{\bf SUBCASE 10.1.} $i = 0$, i.e., $|G| = 3^{2} 7^{1} = 63$.
\smallskip

By Sylow's Theorem, we infer that $G$ has a unique Sylow 7-subgroup $H$.
Thus, it follows by Lemma~\ref{lem:ineqDC}.2 that $\mathsf D (G) \ge \mathsf d (G) + 1 \ge \big( \mathsf d (H) + \mathsf d (G/H) \big) + 1 \ge 11$.

\smallskip
\noindent
{\bf SUBCASE 10.2.} $i = 1$, i.e., $|G| = 2^{1} 3^{2} 7^{1} = 126$.
\smallskip

By Sylow's Theorem, we infer that $G$ has a unique Sylow 7-subgroup $H$.
If $G/H$ is abelian, then $G/H$ is isomorphic to $C_{18}$, or to $C_3 \times C_6$.
Thus, by Lemma~\ref{lem:ineqDC}.2, $\mathsf D (G) \ge \mathsf d (G) + 1 \ge \big( \mathsf d (H) + \mathsf d (G/H) \big) + 1 \ge 14$.
If $G/H$ is non-abelian, then it follows by Lemmas~\ref{lem:ineqDC}.3 and \ref{lem:dD} that $\mathsf D (G) \ge \mathsf D (G/H) \ge 10$.

\smallskip
\noindent
{\bf SUBCASE 10.3.} $i = 2$, i.e., $|G| = 2^{2} 3^{2} 7^{1} = 252$.
\smallskip

By Lemma~\ref{lem:perfect}, every group of order 252 is not a perfect group, and so such a group has a non-trivial proper subgroup $G'$.
The possible values of the pair $\big( |G'|, |G/G'| \big)$ are as follows:
\[
  \begin{aligned}
    \big( |G'|, |G/G'| \big) \,\, \in & \,\, \big\{ (2,126), \, (3,84), \, (4, 63), \, (6,42), \, (7, 36), \, (9,28), \, (12,21), \, (14,18), \, (18,14), \, (21,12), \\
                                       & \quad (28,9), \, (36,7), \, (42,6), \, (63,4), \, (84,3), \, (126,2) \big\} \,.
  \end{aligned}
\]

If $|G'|, |G/G'| \in \{ 42, 36, 28, 21, 14 \}$, then Lemma~\ref{lem:subgroup} ensures that $\mathsf D (G) \ge 10$.

Suppose that $|G'| \in \{ 126, 84, 63 \}$.
By (\ref{eq:order}), we infer that $G'$ must be non-abelian, and so {\bf SUBCASES 10.2}--{\bf 10.1}, and {\bf 9.1} ensure that $\mathsf D (G) \ge \mathsf D (G') \ge 10$.

Suppose that $|G/G'| \in \{ 126, 84, 63 \}$.
Since $G/G'$ is abelian, it is easy to see that $G/G'$ has an element of order 21, and it follows again by Lemma~\ref{lem:ineqDC}.3 that $\mathsf D (G) \ge \mathsf D (G/G') \ge 21$.

\smallskip
\noindent
{\bf SUBCASE 10.4.} $i = 3$, i.e., $|G| = 2^{3} 3^{2} 7^{1} = 504$.
\smallskip

By Lemma~\ref{lem:perfect}, there exists a perfect group of order 504, and so if $G$ is such a perfect group of order 504, then it follows by \cite{Bu99} that $G \cong \langle a, b \mid a^{7} = b^{2} = (ab)^{3} = (a^{3} b a^{5} b a^{3} b)^{2} = 1_G \rangle$ and $x = ba^{3}$ is an element of $G$ of order 9, a contradiction to (\ref{eq:order}).
Suppose now that $G$ is not a perfect group, so that $G$ has a non-trivial proper subgroup $G'$.
The possible values of the pair $\big( |G'|, |G/G'| \big)$ are as follows:
\[
  \begin{aligned}
    \big( |G'|, |G/G'| \big) \,\, \in & \,\, \big\{ (2,252), \, (3,168), \, (4,126), \, (6,84), \, (7,72), \, (8,63), \, (9,56), \, (12,42), \, (14,36), \, (18,28), \\
                                       & \quad (21,24), \, (24,21), \, (28,18), \, (36,14), \, (42,12), \, (56,9), \, (63,8), \, (72,7), \, (84,6), \, (126,4), \\
                                       & \quad (168,3), \, (252,2) \big\} \,.
  \end{aligned}
\]

If $|G'|, |G/G'| \in \{ 42, 36, 28, 21, 14 \}$, then Lemma~\ref{lem:subgroup} ensures that $\mathsf D (G) \ge 10$.

Suppose that $|G'| \in \{ 252, 168, 126, 84, 72, 63, 56 \}$.
If $G'$ is non-abelian, then {\bf SUBCASES 10.3}--{\bf 10.1}, {\bf 9.2}--{\bf 9.1}, {\bf 8.1}, and {\bf 2.1} ensure that $\mathsf D (G) \ge \mathsf D (G') \ge 10$.
If $G'$ is abelian, then by (\ref{eq:order}), we only have that $G' \cong C_2 \times C^{2}_6$ with $|G'| = 72$, whence $\mathsf D (G) \ge \mathsf D (G') \ge 12$.

Suppose that $|G/G'| \in \{ 252, 168, 126, 84, 72, 63, 56 \}$.
If $G/G' \cong C_2 \times C^{2}_6$ with $|G/G'| = 72$, then Lemma~\ref{lem:ineqDC}.3 ensures that $\mathsf D (G) \ge \mathsf D (G/G') \ge 12$.
Otherwise, we infer that $G/G'$ has an element of order at least 12, and it follows by Lemmas~\ref{lem:ineqDC}.3 and \ref{lem:order9} that $\mathsf D (G) \ge \mathsf D (G/G') \ge 10$.

\smallskip
\noindent
{\bf SUBCASE 10.5.} $i = 4$, i.e., $|G| = 2^{4} 3^{2} 7^{1} = 1008$.
\smallskip

By Lemma~\ref{lem:perfect}, every group of order 1008 is not a perfect group, and so such a group has a non-trivial proper subgroup $G'$.
The possible values of the pair $\big( |G'|, |G/G'| \big)$ are as follows:
\[
  \begin{aligned}
    \big( |G'|, |G/G'| \big) \,\, \in & \,\, \big\{ (2,504), \, (3,336), \, (4,252), \, (6,168), \, (7,144), \, (8,126), \, (9,112), \, (12,84), \, (14,72), \, (16, 63), \\
                                       & \quad (18,56), \, (21,48), \, (24,42), \, (28,36), \, (36,28), \, (42,24), \, (48,21), \, (56,18), \, (63, 16), \, (72, 14), \\
                                       & \quad (84,12), \, (112,9), \, (126,8), \, (144,7), \, (168,6), \, (252,4), \, (336,3), \, (504, 2) \big\} \,. \\
  \end{aligned}
\]

If $|G'|, |G/G'| \in \{ 42, 36, 28, 21, 14 \}$, then Lemma~\ref{lem:subgroup} ensures that $\mathsf D (G) \ge 10$.

Suppose that $|G'| \in \{ 504, 336, 252, 168, 144, 126, 112, 84, 63, 56 \}$.
If $G'$ is non-abelian, then {\bf SUBCASES 10.4}--{\bf 10.1}, {\bf 9.3}--{\bf 9.1}, {\bf 8.2}--{\bf 8.1}, and {\bf 2.2} ensure that $\mathsf D (G) \ge \mathsf D (G') \ge 10$.
If $G'$ is abelian, then by (\ref{eq:order}), we only have that $G' \cong C^{2}_2 \times C^{2}_6$ with $|G'| = 144$, and thus Lemma~\ref{lem:ineqDC}.1 that $\mathsf D (G) \ge \mathsf D (G') + 1 \ge 14$.

Suppose that $|G/G'| \in \{ 504, 336, 252, 168, 144, 126, 112, 84, 63, 56 \}$.
If $G/G' \cong C^{2}_2 \times C^{2}_6$ with $|G/G'| = 144$, then Lemma~\ref{lem:ineqDC}.3 implies that $\mathsf D (G) \ge \mathsf D (G/G') \ge 13$.
Otherwise, we infer that $G/G'$ has an element of order at least 12, and thus Lemmas~\ref{lem:ineqDC}.3 and \ref{lem:order9} imply that $\mathsf D (G) \ge \mathsf D (G/G') \ge 10$.

\smallskip
\noindent
{\bf CASE 11.} $\ell = 1$, $k = 0$, and $j = 3$, i.e., $|G| = 2^{i} 3^{3} 7^{1}$.
\smallskip

Then, since $|G| > 42$, it follows that $i \in [0,4]$.

\smallskip
\noindent
{\bf SUBCASE 11.1.} $i \in \{ 0, 1 \}$, i.e., $|G| = 3^{3} 7^{1} = 189$ or $|G| = 2^{1} 3^{3} 7^{1} = 378$.
\smallskip

By Sylow's Theorem, it is easy to see that $G$ has a unique Sylow 7-subgroup $H$.
By Cauchy's Theorem, $G$ has a subgroup $K$ of order 3, and since $H$ intersects trivially with $K$, we infer that $HK$ is a subgroup of $G$ of order 21.
In view of (\ref{eq:order}), we must have that $HK \cong C_7 \rtimes C_3$, and by Lemma~\ref{lem:dD} that $\mathsf D (G) \ge \mathsf D (HK) = 14$.

\smallskip
\noindent
{\bf SUBCASE 11.2.} $i = 2$, i.e., $|G| = 2^{2} 3^{3} 7^{1} = 756$.
\smallskip

By Lemma~\ref{lem:perfect}, every group of order 756 is not a perfect group, and so such a group has a non-trivial proper subgroup $G'$.
The possible values of the pair $\big( |G'|, |G/G'| \big)$ are as follows:
\[
  \begin{aligned}
    \big( |G'|, |G/G'| \big) \,\, \in & \,\, \big\{ (2,378), \, (3,252), \, (4,189), \, (6,126), \, (7,108), \, (9,84), \, (12,63), \, (14,54), \, (18, 42), \, (12,36), \\
                                       & \quad (27,28), \, (28,27), \, (36,21), \, (42,18), \, (54,14), \, (63,12), \, (84,9), \, (108,7), \, (126,6), \, (189,4), \\
                                       & \quad (252,3), \, (378,2) \big\} \,.
  \end{aligned}
\]

If $|G'|, |G/G'| \in \{ 42, 36, 28, 21, 14 \}$, then Lemma~\ref{lem:subgroup} ensures that $\mathsf D (G) \ge 10$.

Suppose that $|G'| \in \{ 378, 252, 189, 126, 108, 84, 63 \}$.
If $G'$ is non-abelian, then {\bf SUBCASES 11.1}, {\bf 10.3}--{\bf 10.1}, {\bf 9.1}, and {\bf 3.2} ensure that $\mathsf D (G) \ge \mathsf D (G') \ge 10$.
If $G'$ is abelian, then by (\ref{eq:order}), we only have that $G' \cong C_3 \times C^{2}_6$ with $|G'| = 108$, whence $\mathsf D (G) \ge \mathsf D (G') \ge 13$.

Suppose that $|G/G'| \in \{ 378, 252, 189, 126, 108, 84, 63 \}$.
If $G/G' \cong C_3 \times C^{2}_6$ with $|G/G'| = 108$, then $\mathsf D (G/G') \ge 13$.
Otherwise, we infer that $G/G'$ has an element of order at least 12, and so Lemma~\ref{lem:order9} implies that $\mathsf D (G/G') \ge 10$.
In either case, we infer by Lemma~\ref{lem:ineqDC}.3 that $\mathsf D (G) \ge \mathsf D (G/G') \ge 10$.

\smallskip
\noindent
{\bf SUBCASE 11.3.} $i = 3$, i.e., $|G| = 2^{3} 3^{3} 7^{1} = 1512$.
\smallskip

By Lemma~\ref{lem:perfect}, every group of order 1512 is not a perfect group, and so such a group has a non-trivial proper subgroup $G'$.
The possible values of the pair $\big( |G'|, |G/G'| \big)$ are as follows:
\[
  \begin{aligned}
    \big( |G'|, |G/G'| \big) \,\, \in & \,\, \big\{ (2,756), \, (3,504), \, (4,378), \, (6,252), \, (7,216), \, (8,189), \, (9,168), \, (12,126), \, (14,108), \\
                                       & \quad (18, 84), \, (21,72), \, (24,63), \, (27,56), \, (28,54), \, (36,42), \, (42,36), \, (54,28), \, (56,27), \\
                                       & \quad (63,24), \, (72,21), \, (84,18), \,(108,14), \, (126,12), \, (168,9), \, (189,8), \, (216,7), \, (252,6), \\
                                       & \quad (378,4), \, (504,3), \, (756,2) \big\} \,.
  \end{aligned}
\]

If $|G'|, |G/G'| \in \{ 42, 36, 28, 21, 14 \}$, then Lemma~\ref{lem:subgroup} ensures that $\mathsf D (G) \ge 10$.

Suppose that $|G'| \in \{ 756, 504, 378, 252, 216, 189, 168, 126, 84, 63, 56 \}$.
If $G'$ is non-abelian, then {\bf SUBCASES 11.2}--{\bf 11.1}, {\bf 10.4}--{\bf 10.1}, {\bf 9.2}--{\bf 9.1}, {\bf 8.1}, and {\bf 3.3} ensure that $\mathsf D (G) \ge \mathsf D (G') \ge 10$.
If $G'$ is abelian, then by (\ref{eq:order}), we only have that $G' \cong C^{3}_6$ with $|G'| = 216$, and hence $\mathsf D (G) \ge \mathsf D (G') \ge 16$.

Suppose that $|G/G'| \in \{ 756, 504, 378, 252, 216, 189, 168, 126, 84, 63, 56 \}$.
If $G/G' \cong C^{3}_6$ with $|G/G'| = 216$, then $\mathsf D (G/G') \ge 13$.
Otherwise, we infer that $G/G'$ has an element of order at least 12, and so Lemma~\ref{lem:order9} ensures that $\mathsf D (G/G') \ge 10$.
In either case, we infer by Lemma~\ref{lem:ineqDC}.3 that $\mathsf D (G) \ge \mathsf D (G/G') \ge 10$.

\smallskip
\noindent
{\bf SUBCASE 11.4.} $i = 4$, i.e., $|G| = 2^{4} 3^{3} 7^{1} = 3024$.
\smallskip

By Lemma~\ref{lem:perfect}, every group of order 3024 is not a perfect group, and so such a group has a non-trivial proper subgroup $G'$.
The possible values of the pair $\big( |G'|, |G/G'| \big)$ are as follows:
\[
  \begin{aligned}
    \big( |G'|, |G/G'| \big) \,\, \in & \,\, \big\{ (2,1512), \, (3,1008), \, (4,756), \, (6,504), \, (7,432), \, (8,378), \, (9,336), \, (12,252), \, (14,216), \\
                                       & \quad (16, 189), \, (18, 168), \, (21,144), \, (24,126), \, (27,112), \, (28,108), \, (36,84), \, (42,72), \, (48,63), \\
                                       & \quad (54,56), \, (56,54), \, (63,48), \, (72,42), \, (84,36), \, (108,28), \, (112,27), \, (126,24), \, (144,21), \\
                                       & \quad (168,18), \, (189,16), \, (216,14), \, (252,12), \, (336,9), \, (378,8), \, (432,7), \, (504,6), \, (756,4), \\
                                       & \quad (1008,3), \, (1512,2) \big\} \,.
  \end{aligned}
\]

If $|G'|, |G/G'| \in \{ 42, 36, 28, 21, 14 \}$, then Lemma~\ref{lem:subgroup} ensures that $\mathsf D (G) \ge 10$.

Suppose that $|G'| \in \{ 1512, 1008, 756, 504, 432, 378, 336, 252, 189, 168, 126, 112, 63, 56 \}$.
If $G'$ is non-abelian, then {\bf SUBCASES 11.3}--{\bf 11.1}, {\bf 10.5}--{\bf 10.1}, {\bf 9.3}--{\bf 9.2}, {\bf 8.2}--{\bf 8.1}, and {\bf 3.4} ensure that $\mathsf D (G) \ge \mathsf D (G') \ge 10$.
If $G'$ is abelian, then by (\ref{eq:order}), we only have that $G' \cong C_2 \times C^{3}_6$ with $|G'| = 432$, and thus $\mathsf D (G) \ge \mathsf D (G') \ge 17$.

Suppose that $|G/G'| \in \{ 1512, 1008, 756, 504, 432, 378, 336, 252, 189, 168, 126, 112, 63, 56 \}$.
If $G/G' \cong C_2 \times C^{3}_6$ with $|G/G'| = 432$, then $\mathsf D (G/G') \ge 17$.
Otherwise, we infer that $G/G'$ has an element of order at least 12, and so Lemma~\ref{lem:order9} ensures that $\mathsf D (G/G') \ge 10$.
In either case, it follows by Lemma~\ref{lem:ineqDC}.3 that $\mathsf D (G) \ge \mathsf D (G/G') \ge 10$.

\smallskip
\noindent
{\bf CASE 12.} $\ell = k = 1$ and $j = 0$, i.e., $|G| = 2^{i} 5^{1} 7^{1}$.
\smallskip

Then, since $|G| > 42$, it follows that $i \in [1,4]$, and if $i \in [1,2]$, then by Lemma~\ref{lem:35}, $G$ has an element of order 35.
Thus, we have that $i \in [3,4]$.

\smallskip
\noindent
{\bf SUBCASE 12.1.} $i = 3$, i.e., $|G| = 2^{3} 5^{1} 7^{1} = 280$.
\smallskip

By Lemma~\ref{lem:perfect}, every group of order 280 is not a perfect group, and so such a group has a non-trivial proper subgroup $G'$.
The possible values of the pair $\big( |G'|, |G/G'| \big)$ are as follows:
\[
  \begin{aligned}
    \big( |G'|, |G/G'| \big) \,\, \in & \,\, \big\{ (2,140), \, (4,70), \, (5,56), \, (7,40), \, (8,35), \, (10,28), \, (14,20), \, (20,14), \, (28,10), \, (35,8), \\
                                       & \quad (40,7), \, (56,5), \, (70,4), \, (140,2) \big\} \,.
  \end{aligned}
\]

If $|G'|, |G/G'| \in \{ 140, 70, 40, 35, 28, 30, 14, 10 \}$, then Lemma~\ref{lem:subgroup} ensures that $\mathsf D (G) \ge 10$.

If $|G'| = 56$, then by (\ref{eq:order}), it is easy to see that $G'$ must be non-abelian, and hence {\bf SUBCASE 8.1} ensures that $\mathsf D (G) \ge \mathsf D (G') \ge 10$.

If $|G/G'| = 56$, then we infer that $G/G'$ has an element of order 14, and thus it follows by Lemmas~\ref{lem:ineqDC}.3 and \ref{lem:order9} that $\mathsf D (G) \ge \mathsf D (G/G') \ge 10$.

\smallskip
\noindent
{\bf SUBCASE 12.2.} $i = 4$, i.e., $|G| = 2^{4} 5^{1} 7^{1} = 560$.
\smallskip

By Lemma~\ref{lem:perfect}, every group of order 560 is not a perfect group, and so such a group has a non-trivial proper subgroup $G'$.
The possible values of the pair $\big( |G'|, |G/G'| \big)$ are as follows:
\[
  \begin{aligned}
    \big( |G'|, |G/G'| \big) \,\, \in & \,\, \big\{ (2,280), \, (4,140), \, (5,112), \, (7,80), \, (8,70), \, (10,56), \, (14,40), \, (16,35), \, (20,28), \, (28,20), \\
                                       & \quad (35,16), \, (40,14), \, (56,10), \, (70,8), \, (80,7), \, (112,5), \, (140,4), \, (280,2) \big\} \,.
  \end{aligned}
\]

If $|G'|, |G/G'| \in \{ 140, 70, 40, 35, 28, 20, 14, 10 \}$, then Lemma~\ref{lem:subgroup} ensures that $\mathsf D (G) \ge 10$.

Suppose that $|G'| \in \{ 280, 112, 80 \}$.
By (\ref{eq:order}), it is easy to see that $G'$ must be non-abelian, and thus {\bf SUBCASES 12.1}, {\bf 8.2}, and {\bf CASE 4} ensure that $\mathsf D (G) \ge \mathsf D (G') \ge 10$.

Suppose that $|G/G'| \in \{ 280, 112, 80 \}$.
Then, we infer that $G/G'$ has an element of order at least 10, and thus Lemmas~\ref{lem:ineqDC}.3 and \ref{lem:order9} ensure that $\mathsf D (G) \ge \mathsf D (G/G') \ge 10$.

\smallskip
\noindent
{\bf CASE 13.} $\ell = k = j = 1$, i.e., $|G| = 2^{i} 3^{1} 5^{1} 7^{1}$.
\smallskip

Then, since $|G| > 42$, it follows that $i \in [0,4]$, and if $i = 0$, then by Lemma~\ref{lem:35}, $G$ has an element of order 35.
Thus, we have that $i \in [1,4]$.

\smallskip
\noindent
{\bf SUBCASE 13.1.} $i = 1$, i.e., $|G| = 2^{1} 3^{1} 5^{1} 7^{1} = 210$.
\smallskip

By Lemma~\ref{lem:perfect}, every group of order 210 is not a perfect group, and so such a group has a non-trivial proper subgroup $G'$.
The possible values of the pair $\big( |G'|, |G/G'| \big)$ are as follows:
\[
  \begin{aligned}
    \big( |G'|, |G/G'| \big) \,\, \in & \,\, \big\{ (2,105), \, (3,70), \, (5,42), \, (6,35), \, (7,30), \, (10,21), \, (14,15), \, (15,14), \, (21,10), \, (30,7), \\
                                       & \quad (35,6), \, (42,5), \, (70,3), \, (105,2) \big\} \,.
  \end{aligned}
\]

Then, Lemma~\ref{lem:subgroup} ensures that $\mathsf D (G) \ge 10$.

\smallskip
\noindent
{\bf SUBCASE 13.2.} $i = 2$, i.e., $|G| = 2^{2} 3^{1} 5^{1} 7^{1} = 420$.
\smallskip

By Lemma~\ref{lem:perfect}, every group of order 420 is not a perfect group, and so such a group has a non-trivial proper subgroup $G'$.
The possible values of the pair $\big( |G'|, |G/G'| \big)$ are as follows:
\[
  \begin{aligned}
    \big( |G'|, |G/G'| \big) \,\, \in & \,\, \big\{ (2,210), \, (3,140), \, (4,105), \, (5,84), \, (6,70), \, (7,60), \, (10,42), \, (12,35), \, (14,30), \, (15,28), \\
                                       & \quad (20,21), \, (21,20), \, (28,15), \,(30,14), \, (35,12), \, (42,10), \, (60,7), \, (70,6), \, (84,5), \, (105,4), \\
                                       & \quad (140,3), \, (210,2) \big\} \,.
  \end{aligned}
\]

If $|G'|, |G/G'| \in \{ 140, 105, 70, 42, 35, 30, 28, 21, 20, 15, 14, 10 \}$, then Lemma~\ref{lem:subgroup} ensures that $\mathsf D (G) \ge 10$.

Suppose that $|G'| \in \{ 210, 84, 60 \}$.
By (\ref{eq:order}), it is easy to see that $G'$ must be non-abelian, and thus {\bf SUBCASES 13.1}, {\bf 9.1}, and {\bf 5.1} ensure that $\mathsf D (G) \ge \mathsf D (G') \ge 10$.

Suppose that $|G/G'| \in \{ 210, 84, 60 \}$.
Then, we infer that $G/G'$ has an element of order at least 10, and it follows by Lemmas~\ref{lem:ineqDC}.3 and \ref{lem:order9} that $\mathsf D (G) \ge \mathsf D (G/G') \ge 10$.

\smallskip
\noindent
{\bf SUBCASE 13.3.} $i = 3$, i.e., $|G| = 2^{3} 3^{1} 5^{1} 7^{1} = 840$.
\smallskip

By Lemma~\ref{lem:perfect}, every group of order 840 is not a perfect group, and so such a group has a non-trivial proper subgroup $G'$.
The possible values of the pair $\big( |G'|, |G/G'| \big)$ are as follows:
\[
  \begin{aligned}
    \big( |G'|, |G/G'| \big) \,\, \in & \,\, \big\{ (2,420), \, (3,280), \, (4,210), \, (5,168), \, (6,140), \, (7,120), \, (8,105), \, (10,84), \, (12,70), \, (14, 60), \\
                                       & \quad (15,56), \, (20,42), \, (21,40), \, (24,35), \, (28,30), \,(30,28), \, (35,24), \, (40,21), \, (42, 20), \, (56, 15), \\
                                       & \quad (60,14), \, (70,12), \, (84,10), \, (105,8), \, (120,7), \, (140,6), \, (168,5), \, (210,4), \, (280,3), \, (420,2) \big\} \,.
  \end{aligned}
\]

If $|G'|, |G/G'| \in \{ 140, 105, 70, 42, 40, 35, 30, 28, 21, 20, 15, 14, 10 \}$, then Lemma~\ref{lem:subgroup} ensures that $\mathsf D (G) \ge 10$.

Suppose that $|G'| \in \{ 420, 280, 210, 168, 120 \}$.
By (\ref{eq:order}), it is easy to see that $G'$ must be non-abelian, and thus {\bf SUBCASES 13.2}--{\bf 13.1}, {\bf 12.1}, {\bf 9.2}, and {\bf 5.2} ensure that $\mathsf D (G) \ge \mathsf D (G') \ge 10$.

Suppose that $|G/G'| \in \{ 420, 280, 210, 168, 120 \}$.
Then, we infer that $G/G'$ has an element of order at least 10, and thus it follows by Lemmas~\ref{lem:ineqDC}.3 and \ref{lem:order9} that $\mathsf D (G) \ge \mathsf D (G/G') \ge 10$.

\smallskip
\noindent
{\bf SUBCASE 13.4.} $i = 4$, i.e., $|G| = 2^{4} 3^{1} 5^{1} 7^{1} = 1680$.
\smallskip

By Lemma~\ref{lem:perfect}, every group of order 1680 is not a perfect group, and so such a group has a non-trivial proper subgroup $G'$.
The possible values of the pair $\big( |G'|, |G/G'| \big)$ are as follows:
\[
  \begin{aligned}
    \big( |G'|, |G/G'| \big) \,\, \in & \,\, \big\{ (2,840), \, (3,560), \, (4,420), \, (5,336), \, (6,280), \, (7,240), \, (8,210), \, (10,168), \, (12,140), \\
                                       & \quad (14,120), \, (15,112), \, (16,105), \, (20,84), \, (21,80), \, (24,70), \, (28,60), \,(30,56), \, (35,48), \\
                                       & \quad (40,42), \, (42,40), \, (48,35), \, (56,30), \, (60,28), \, (70,24), \, (80,21), \, (84,20), \, (105,16), \\
                                       & \quad (112,15), \, (120,14), \, (140,12), \, (168,10), \, (210,8), \, (240,7), \, (280,6), \, (336,5), \\
                                       & \quad (420,4), \, (560,3), \, (840,2) \big\} \,.
  \end{aligned}
\]

If $|G'|, |G/G'| \in \{ 140, 105, 70, 42, 40, 35, 30, 28, 21, 20, 15, 14, 10 \}$, then Lemma~\ref{lem:subgroup} ensures that $\mathsf D (G) \ge 10$.

Suppose that $|G'| \in \{ 840, 560, 420, 336, 280, 240, 210 \}$.
By (\ref{eq:order}), it is easy to see that $G'$ must be non-abelian, and thus {\bf SUBCASES 13.3}--{\bf 13.1}, {\bf 12.2}--{\bf 12.1}, {\bf 9.3}, and {\bf 5.3} ensure that $\mathsf D (G) \ge \mathsf D (G') \ge 10$.

Suppose that $|G/G'| \in \{ 840, 560, 420, 336, 280, 240, 210 \}$.
Then, we infer that $G/G'$ has an element of order at least 10, and hence it follows by Lemmas~\ref{lem:ineqDC}.3 and \ref{lem:order9} that $\mathsf D (G) \ge \mathsf D (G/G') \ge 10$.

\smallskip
\noindent
{\bf CASE 14.} $\ell = k = 1$ and $j = 2$, i.e., $|G| = 2^{i} 3^{2} 5^{1} 7^{1}$.
\smallskip

Then, since $|G| > 42$, it follows that $i \in [0,4]$.

\smallskip
\noindent
{\bf SUBCASE 14.1.} $i = 0$, i.e., $|G| = 3^{2} 5^{1} 7^{1} = 315$.
\smallskip

By Lemma~\ref{lem:perfect}, every group of order 315 is not a perfect group, and so such a group has a non-trivial proper subgroup $G'$.
The possible values of the pair $\big( |G'|, |G/G'| \big)$ are as follows:
\[
   \big( |G'|, |G/G'| \big) \,\, \in \,\, \big\{ (3,105), \, (5,63), \, (7,45), \, (9,35), \, (15,21), \, (21,15), \, (35,9), \, (45,7), \, (63,5), \, (105,3) \big\} \,.
\]

If $|G'|, |G/G'| \in \{ 105, 45, 35, 21, 15 \}$, then Lemma~\ref{lem:subgroup} ensures that $\mathsf D (G) \ge 10$.

If $|G'| = 63$, then by (\ref{eq:order}), it is easy to see that $G'$ must be non-abelian, and thus {\bf SUBCASE 10.1} ensures that $\mathsf D (G) \ge \mathsf D (G') \ge 10$.

If $|G/G'| = 63$, then we infer that $G/G'$ has an element of order 21, whence it follows by Lemmas~\ref{lem:ineqDC}.3 and \ref{lem:order9} that $\mathsf D (G) \ge \mathsf D (G/G') \ge 21$.

\smallskip
\noindent
{\bf SUBCASE 14.2.} $i = 1$, i.e., $|G| = 2^{1} 3^{2} 5^{1} 7^{1} = 630$.
\smallskip

By Lemma~\ref{lem:perfect}, every group of order 630 is not a perfect group, and so such a group has a non-trivial proper subgroup $G'$.
The possible values of the pair $\big( |G'|, |G/G'| \big)$ are as follows:
\[
  \begin{aligned}
    \big( |G'|, |G/G'| \big) \,\, \in & \,\, \big\{ (2,315), \, (3,210), \, (5,126), \, (6,105), \, (7,90), \, (9,70), \, (10,63), \, (14,45), \, (15,42), \, (18,35), \\
                                       & \quad (21,30), \, (30,21), \, (35,18), \, (42,15), \, (45,14), \, (63,10), \, (70,9), \, (90,7), \, (105,6), \, (126,5), \\
                                       & \quad (210,3), \, (315,2)   \big\} \,.
  \end{aligned}
\]

If $|G'|, |G/G'| \in \{ 105, 70, 45, 42, 35, 30, 21, 15, 14, 10 \}$, then Lemma~\ref{lem:subgroup} ensures that $\mathsf D (G) \ge 10$.

If $|G'| \in \{ 315, 210, 126, 90 \}$, then by (\ref{eq:order}), it is easy to see that $G'$ must be non-abelian, whence {\bf SUBCASES 14.1}, {\bf 13.1}, {\bf 10.2}, and {\bf 6.1} ensure that $\mathsf D (G) \ge \mathsf D (G') \ge 10$.

If $|G/G'| \in \{ 315, 210, 126, 90 \}$, then we infer that that $G/G'$ has an element of order at least 10, and hence it follows by Lemmas~\ref{lem:ineqDC}.3 and \ref{lem:order9} that $\mathsf D (G) \ge \mathsf D (G/G') \ge 10$.

\smallskip
\noindent
{\bf SUBCASE 14.3.} $i = 2$, i.e., $|G| = 2^{2} 3^{2} 5^{1} 7^{1} = 1260$.
\smallskip

By Lemma~\ref{lem:perfect}, every group of order 1260 is not a perfect group, and so such a group has a non-trivial proper subgroup $G'$.
The possible values of the pair $\big( |G'|, |G/G'| \big)$ are as follows:
\[
  \begin{aligned}
    \big( |G'|, |G/G'| \big) \,\, \in & \,\, \big\{ (2,630), \, (3,420), \, (4,315), \, (5,252), \, (6,210), \, (7,180), \, (9,140), \, (10,126), \, (12,105), \\
                                       & \quad (14,90), \, (15,84), \, (18,70), \, (20,63), \, (21,60), \, (28,45), \, (30,42), \, (35,36), \, (36,35), \\
                                       & \quad (42,30), \, (45,28), \, (60,21), \, (63,20), \, (70,18), \, (84,15), \, (90,14), \, (105,12), \, (126,10), \\
                                       & \quad (140,9), \, (180,7), \, (210,6), \, (252,5), \, (315,4), \, (420,3), \, (630,2) \big\} \,.
  \end{aligned}
\]

If $|G'|, |G/G'| \in \{ 140, 105, 70, 45, 42, 36, 35, 30, 28, 21, 20, 15, 14, 10 \}$, then Lemma~\ref{lem:subgroup} ensures that $\mathsf D (G) \ge 10$.

If $|G'| \in \{ 630, 420, 315, 252, 210, 180 \}$, then by (\ref{eq:order}), it is easy to see that $G'$ must be non-abelian, whence {\bf SUBCASES 14.2}--{\bf 14.1}, {\bf 13.2}--{\bf 13.1}, {\bf 10.3}, and {\bf 6.2} ensure that $\mathsf D (G) \ge \mathsf D (G') \ge 10$.

If $|G/G'| \in \{ 630, 420, 315, 252, 210, 180 \}$, then we infer that $G/G'$ has an element of order at least 10, whence it follows by Lemmas~\ref{lem:ineqDC}.3 and \ref{lem:order9} that $\mathsf D (G) \ge \mathsf D (G/G') \ge 10$.

\smallskip
\noindent
{\bf SUBCASE 14.4.} $i = 3$, i.e., $|G| = 2^{3} 3^{2} 5^{1} 7^{1} = 2520$.
\smallskip

By Lemma~\ref{lem:perfect}, there exists a perfect group of order 2520, and hence if $G$ is such a perfect group of order 2520, then $G \cong A_7$, and $G$ has a subgroup $H$ isomorphic to $A_6$.
Thus it follows by {\bf SUBCASE 6.3} that $\mathsf D (G) \ge \mathsf D (A_6) \ge 10$.
Except in the case where $G \cong A_7$, we infer that $G$ has a non-trivial proper subgroup $G'$.
The possible values of the pair $\big( |G'|, |G/G'| \big)$ are as follows:
\[
  \begin{aligned}
    \big( |G'|, |G/G'| \big) \,\, \in & \,\, \big\{ (2,1260), \, (3,840), \, (4,630), \, (5,504), \, (6,420), \, (7,360), \, (8,315), \, (9,280), \, (10,252), \\
                                       & \quad (12,210), \, (14,180), \, (15,168), \, (18,140), \, (20,126), \, (21,120), \, (24,105), \, (28,90), \, (30,84), \\
                                       & \quad (35,72), \, (36,70), \, (40,63), \, (42,60), \, (45,56), \, (56,45), \, (60,42), \, (63,40), \, (70,36), \, (72,35), \\
                                       & \quad (84,30), \, (90,28), \, (105,24), \, (120,21), \, (126,20), \, (140,18), \, (168,15), \, (180,14), \, (210,12), \\
                                       & \quad (252,10), \, (280,9), \, (315,8), \, (360,7), \, (420,6), \, (504,5), \, (630,4), \, (840,3), \, (1260,2) \big\} \,.
  \end{aligned}
\]

If $|G'|, |G/G'| \in \{ 140, 105, 70, 45, 42, 40, 36, 35, 30, 28, 21, 20 15, 14, 10 \}$, then by Lemma~\ref{lem:subgroup}, $\mathsf D (G) \ge 10$.

If $|G'| \in \{ 1260, 840, 630, 504, 420, 360, 315, 280, 210 \}$, then by (\ref{eq:order}), it is easy to see that $G'$ must be non-abelian, whence {\bf SUBCASES 14.3}--{\bf 14.1}, {\bf 13.3}--{\bf 13.1}, {\bf 12.1}, {\bf 10.4}, and {\bf 6.3} ensure that $\mathsf D (G) \ge \mathsf D (G') \ge 10$.

If $|G/G'| \in \{ 1260, 840, 630, 504, 420, 360, 315, 280, 210 \}$, then we infer that $G/G'$ has an element of order at least 10, and thus it follows by Lemmas~\ref{lem:ineqDC}.3 and \ref{lem:order9} that $\mathsf D (G) \ge \mathsf D (G/G') \ge 10$.

\smallskip
\noindent
{\bf SUBCASE 14.5.} $i = 4$, i.e., $|G| = 2^{4} 3^{2} 5^{1} 7^{1} = 5040$.
\smallskip

By Lemma~\ref{lem:perfect}, there exists a perfect group of order 5040, and so if $G$ is such a perfect group of order 5040, then $G$ has a normal subgroup $H$ of order 2 such that $G/H \cong A_7$.
Let $\varphi \colon G \to G/H \cong A_7$ be the canonical epimorphism.
Since $A_6 \le A_7$, we infer that $G/H$ has a subgroup $K/H$ isomorphic to $A_6$, where $K$ is a subgroup of $G$ with $H \subseteq K$.
Thus, $K$ is a (non-abelian) subgroup of order $|K| = |A_6| |H| = 720$, and hence {\bf SUBCASE 6.4} ensures that $\mathsf D (G) \ge \mathsf D (K) \ge 10$.
Suppose now that $G$ is not a perfect group, so that $G$ has a non-trivial proper subgroup $G'$.
The possible values of the pair $\big( |G'|, |G/G'| \big)$ are as follows:
\[
  \begin{aligned}
    \big( |G'|, |G/G'| \big) \,\, \in & \,\, \big\{ (2,2520), \, (3,1680), \, (4,1260), \, (5,1008), \, (6,840), \, (7,720), \, (8,630), \, (9,560), \, (10,504), \\
                                       & \quad (12,420), \, (14,360), \, (15,336), \, (16,315), \, (18,280), \, (20,252), \, (21,240), \, (24,210), \\
                                       & \quad (28,180), \, (30,168), \, (35,144), \, (36,140), \, (40,126), \, (42,120), \, (45,112), \, (48,105), \\
                                       & \quad (56,90), \, (60,84), \, (63,80), \, (70,72), \, (72,70), \, (80,63), \, (84,60), \, (90,56), \, (105,48), \\
                                       & \quad (112,45), \, (120,42), \, (126,40), \, (140,36), \, (144,35), \, (168,30), \, (180,28), \, (210,24), \\
                                       & \quad (240,21), \, (252,20), \, (280,18), \, (315,16), \, (336,15), \, (360,14), \, (420,12), \, (504,10), \\
                                       & \quad (560,9), \, (630,8), \, (720,7), \, (840,6), \, (1008,5), \, (1260,4), \, (1680,3), \, (2520,2) \big\} \,.
  \end{aligned}
\]

If $|G'|, |G/G'| \in \{ 140, 105, 70, 45, 42, 40, 36, 35, 30, 28, 21, 20, 15, 14, 10 \}$, then by Lemma~\ref{lem:subgroup}, $\mathsf D (G) \ge 10$.

If $|G'| \in \{ 2520, 1680, 1260, 1008, 840, 720, 630, 560, 420, 315, 280, 210, 90, 84, 80 \}$, then by (\ref{eq:order}), it is easy to see that $G'$ must be non-abelian, whence {\bf SUBCASES 14.4}--{\bf 14.1}, {\bf 13.4}--{\bf 13.1}, {\bf 12.2}--{\bf 12.1}, {\bf 10.5}, {\bf 9.1}, {\bf 6.4}, {\bf 6.1}, and {\bf CASE 4} ensure that $\mathsf D (G) \ge \mathsf D (G') \ge 10$.

If $|G/G'| \in \{ 2520, 1680, 1260, 1008, 840, 720, 630, 560, 420, 315, 280, 210, 90, 84, 80 \}$, then we infer that $G/G'$ has an element of order at least 10, whence by Lemmas~\ref{lem:ineqDC}.3 and \ref{lem:order9}, $\mathsf D (G) \ge \mathsf D (G/G') \ge 10$.

\smallskip
\noindent
{\bf CASE 15.} $\ell = k = 1$ and $j = 3$, i.e., $|G| = 2^{i} 3^{3} 5^{1} 7^{1}$.
\smallskip

Then, since $|G| > 42$, it follows that $i \in [0,4]$.

\smallskip
\noindent
{\bf SUBCASE 15.1.} $i = 0$, i.e., $|G| = 3^{3} 5^{1} 7^{1} = 945$.
\smallskip

By Feit-Thompson Theorem, we infer that $G$ is a solvable group.
Since $\gcd (135,7) = 1$, it follows by \cite[Proposition II.7.14]{Hu80} that $G$ has a subgroup $H$ with $|H| = 3^{3} 5^{1} = 135$.
By (\ref{eq:order}), it is easy to see that $H$ must be non-abelian, whence {\bf SUBCASE 7.1} ensures that $\mathsf D (G) \ge \mathsf D (H) \ge 11$.

\smallskip
\noindent
{\bf SUBCASE 15.2.} $i = 1$, i.e., $|G| = 2^{1} 3^{3} 5^{1} 7^{1} = 1890$.
\smallskip

By Lemma~\ref{lem:perfect}, every group of order 1890 is not a perfect group, and so such a group has a non-trivial proper subgroup $G'$.
The possible values of the pair $\big( |G'|, |G/G'| \big)$ are as follows:
\[
  \begin{aligned}
    \big( |G'|, |G/G'| \big) \,\, \in & \,\, \big\{ (2,945), \, (3,630), \, (5,378), \, (6,315), \, (7,270), \, (9,210), \, (10,189), \, (14,135), \, (15,126), \\
                                       & \quad (18,105), \, (21,90), \, (27,70), \, (30,63), \, (35,54), \, (42,45), \, (45,42), \, (54,35), \, (63,30), \\
                                       & \quad (70,27), \, (90,21), \, (105,18), \, (126,15), \, (135,14), \, (189,10), \, (210,9), \, (270,7), \, (315,6), \\
                                       & \quad (378,5), \, (630,3), \, (945,2)  \big\} \,.
  \end{aligned}
\]

If $|G'|, |G/G'| \in \{ 105, 70, 45, 42, 35, 30, 21, 15, 14, 10 \}$, then Lemma~\ref{lem:subgroup} ensures that $\mathsf D (G) \ge 10$.

If $|G'| \in \{ 945, 630, 378, 315, 270, 210 \}$, then by (\ref{eq:order}), it is easy to see that $G'$ must be non-abelian, whence {\bf SUBCASES 15.1}, {\bf 14.2}--{\bf 14.1}, {\bf 13.1}, {\bf 11.1}, and {\bf 7.2} ensure that $\mathsf D (G) \ge \mathsf D (G') \ge 10$.

If $|G/G'| \in \{ 945, 630, 378, 315, 270, 210 \}$, then we infer that $G/G'$ has an element of order at least 10, and thus it follows by Lemmas~\ref{lem:ineqDC}.3 and \ref{lem:order9} that $\mathsf D (G) \ge \mathsf D (G/G') \ge 10$.

\smallskip
\noindent
{\bf SUBCASE 15.3.} $i = 2$, i.e., $|G| = 2^{2} 3^{3} 5^{1} 7^{1} = 3780$.
\smallskip

By Lemma~\ref{lem:perfect}, every group of order 3780 is not a perfect group, and so such a group has a non-trivial proper subgroup $G'$.
The possible values of the pair $\big( |G'|, |G/G'| \big)$ are as follows:
\[
  \begin{aligned}
    \big( |G'|, |G/G'| \big) \,\, \in & \,\, \big\{ (2,1890), \, (3,1260), \, (4,945), \, (5,756), \, (6,630), \, (7,540), \, (9,420), \, (10,378), \, (12,315), \\
                                       & \quad (14,270), \, (15,252), \, (18,210), \, (20,189), \, (21,180), \, (27,140), \, (28,135), \, (30,126), \\
                                       & \quad (35,108), \, (36,105), \, (42,90), \, (45,84), \, (54,70), \, (60,63), \, (63,60), \, (70,54), \, (84,45), \\
                                       & \quad (90,42), \, (105,36), \, (108,35), \, (126,30), \, (135,28), \, (140,27), \, (180, 21), \, (189,20), \\
                                       & \quad (210,18), \, (252,15), \, (270,14), \, (315,12), \, (378,10), \, (420,9), \, (540,7), \, (630,6), \, (756,5), \\
                                       & \quad (945,4), \, (1260,3), \, (1890,2)  \big\} \,.
  \end{aligned}
\]

If $|G'|, |G/G'| \in \{ 140, 105, 70, 45, 42, 36, 35, 30, 28, 21, 20, 15, 14, 10 \}$, then by Lemma~\ref{lem:subgroup}, $\mathsf D (G) \ge 10$.

If $|G'| \in \{ 1890, 1260, 945, 756, 630, 540, 420, 315, 210, 63 \}$, then by (\ref{eq:order}), it is easy to see that $G'$ must be non-abelian, whence {\bf SUBCASES 15.2}--{\bf 15.1}, {\bf 14.3}--{\bf 14.1}, {\bf 13.2}--{\bf 13.1}, {\bf 11.2}, {\bf 10.1}, and {\bf 7.3} ensure that $\mathsf D (G) \ge \mathsf D (G') \ge 10$.

If $|G/G'| \in \{ 1890, 1260, 945, 756, 630, 540, 420, 315, 210, 63 \}$, then we infer that $G/G'$ has an element of order at least 10, and it follows by Lemmas~\ref{lem:ineqDC}.3 and \ref{lem:order9} that $\mathsf D (G) \ge \mathsf D (G/G') \ge 10$.

\smallskip
\noindent
{\bf SUBCASE 15.4.} $i = 3$, i.e., $|G| = 2^{3} 3^{3} 5^{1} 7^{1} = 7560$.
\smallskip

By Lemma~\ref{lem:perfect}, there exists a perfect group of order 7560, and so if $G$ is such a perfect group of order 7560, then $G$ has a normal subgroup $H$ of order 3 such that $G/H \cong A_7$.
Let $\varphi \colon G \to G/H \cong A_7$ be the canonical epimorphism.
Since $A_6 \le A_7$, we infer that $G/H$ has a subgroup $K/H$ isomorphic to $A_6$, where $K$ is a subgroup of $G$ with $H \subseteq K$.
Thus, $K$ is a (non-abelian) subgroup of order $|K| = |A_6||H| = 1080$, and hence {\bf SUBCASE 7.4} ensures that $\mathsf D (G) \ge \mathsf D (K) \ge 10$.
Suppose now that $G$ is not a perfect group, so that $G$ has a non-trivial proper $G'$.
The possible values of the pair $\big( |G'|, |G/G'| \big)$ are as follows:
\[
  \begin{aligned}
    \big( |G'|, |G/G'| \big) \,\, \in & \,\, \big\{ (2,3780), \, (3,2520), \, (4,1890), \, (5,1512), \, (6,1260), \, (7,1080), \, (8,945), \, (9,840), \, (10,756), \\
                                       & \quad (12,630), \, (14,540), \, (15,504), \, (18,420), \, (20,378), \, (21,360), \, (24,315), \, (27,280), \, (28,270), \\
                                       & \quad (30,252), \, (35,216), \, (36,210), \, (40,189), \, (42,180), \, (45,168), \, (54,140), \, (56,135), \, (60,126), \\
                                       & \quad (63,120), \, (70,108), \, (72,105), \, (84,90), \, (90,84), \, (105,72), \, (108,70), \, (120,63), \, (126,60), \\
                                       & \quad (135,56), \, (140,54), \, (168,45), \, (180,42), \, (189,40), \, (210,36), \, (216,35), \, (252,30), \, (270,28), \\
                                       & \quad (280,27), \, (315,24), \, (360,21), \, (378,20), \, (420,18), \, (504,15), \, (540,14), \, (630,12), \, (756,10), \\
                                       & \quad (840,9), \, (945,8), \, (1080,7), \, (1260,6), \, (1512,5), \, (1890,4), \, (2520,3), \, (3780,2) \big\} \,.
  \end{aligned}
\]

If $|G'|, |G/G'|  \in \{ 140, 105, 70, 45, 42, 40, 36, 35, 30, 28, 21, 20, 15, 14, 10 \}$, then by Lemma~\ref{lem:subgroup}, $\mathsf D (G) \ge 10$.

If $|G'| \in \{ 3780, 2520, 1890, 1512, 1260, 1080, 945, 840, 630, 420, 315, 280, 135, 126, 120, 90 \}$, then by (\ref{eq:order}), it is easy to see that $G'$ must be non-abelian, whence {\bf SUBCASES 15.3}--{\bf 15.1}, {\bf 14.4}--{\bf 14.1}, {\bf 13.3}--{\bf 13.2}, {\bf 12.1}, {\bf 11.3}, {\bf 10.2}, {\bf 7.4}, {\bf 7.1}, {\bf 6.1}, and {\bf 5.2} ensure that $\mathsf D (G) \ge \mathsf D (G') \ge 10$.

If $|G/G'| \in \{ 3780, 2520, 1890, 1512, 1260, 1080, 945, 840, 630, 420, 315, 280, 135, 126, 120, 90 \}$, then we infer that $G/G'$ has an element of order at least 10, whence by Lemmas~\ref{lem:ineqDC}.3 and \ref{lem:order9}, $\mathsf D (G) \ge \mathsf D (G/G') \ge 10$.

\smallskip
\noindent
{\bf SUBCASE 15.5.} $i = 4$, i.e., $|G| = 2^{4} 3^{3} 5^{1} 7^{1} = 15120$.
\smallskip

Note that the Schur multiplier of $A_7$ is isomorphic to $C_6$ (see \cite[Theorem 2.11]{Ho-Hu92}).
According to the classification method used in \cite{Sa81}, we infer that there exists a perfect group of order 15120 as a Schur covering group of $A_7$ (see also \cite[Chapter 5]{Ho-Pl89}).
Let $G$ be a perfect group of order 15120.
Then, $G$ has a normal cyclic subgroup $H$ of order 6 such that $G/H \cong A_7$.
Let $\varphi \colon G \to G/H \cong A_7$ be the canonical epimorphism.
Since $A_6$ is a subgroup of $A_7$, we infer that $G/H$ has a subgroup $K/H$ isomorphic to $A_6$, where $K$ is a subgroup of $G$ with $H \subseteq K$.
Thus, $K$ is a (non-abelian) subgroup of order $|K| = |A_6| |H| = 2160$, and hence {\bf SUBCASE 7.5} ensures that $\mathsf D (G) \ge \mathsf D (K) \ge 10$.
Suppose now that $G$ is not a perfect group, so that $G$ has a non-trivial proper $G'$.
The possible values of the pair $\big( |G'|, |G/G'| \big)$ are as follows:
\[
  \begin{aligned}
    \big( |G'|, |G/G'| \big) \in & \,\, \big\{ (2,7560), \, (3,5040), \, (4,3780), \, (5,3024), \, (6,2520), \, (7,2160), \, (8,1890), (9,1680), (10,1512), \\
                                       & \quad (12,1260), \, (14,1080), \, (15,1008), \, (16,945), \, (18,840), \, (20,756), \, (21,720), \, (24,630), \\
                                       & \quad (27,560), \, (28,540), \, (30,504), \, (35,432), \, (36,420), \, (40,378), \, (42,360), \, (45,336), \, (48,315), \\
                                       & \quad (54, 280), \, (56,270), \, (60,252), \, (63,240), \, (70,216), \, (72,210), \, (80,189), \, (84,180), \, (90,168),  \\
                                       & \quad (105,144), \, (108,140), \, (112,135), \, (120,126), \, (126,120), \, (135,112), \, (140,108), \, (144,105), \\
                                       & \quad (168,90), \, (180,84), \, (189,80), \, (210,72), \, (216,70), \, (240,63), \, (252,60), \, (270,56), \, (280,54), \\
                                       & \quad (315,48), \, (336,45), \, (360,42), \, (378,40), \, (420,36), \, (432,35), \, (504,30), \, (540,28), \, (560,27), \\
                                       & \quad (630,24), \, (720,21), \, (756,20), \, (840,18), \, (945,16), \, (1008,15), \, (1080,14), \, (1260,12), \\
                                       & \quad (1512,10), (1680,9), (1890,8), (2160,7), (2520,6), (3024,5), \, (3780,4), (5040,3), (7560,2) \big\} \,.
  \end{aligned}
\]

If $|G'|, |G/G'| \in \{ 140, 105, 70, 45, 42, 40, 36, 35, 30, 28, 21, 20, 15, 14, 10 \}$, then by Lemma~\ref{lem:subgroup}, $\mathsf D (G) \ge 10$.

If $|G'| \in \{ 7560, 5040, 3780, 3024, 2520, 2160, 1890, 1680, 1260, 945, 840, 630, 560, 315, 280, 270, 252, 240, 210,$ $189, 180, 168, 135, 126 \}$, then (\ref{eq:order}), it is easy to see that $G'$ must be non-abelian, whence {\bf SUBCASES 15.4}--{\bf 15.1}, {\bf 14.5}--{\bf 14,1}, {\bf 13.4}--{\bf 13.3}, {\bf 13.1}, {\bf 12.2}--{\bf 12.1}, {\bf 11.4}, {\bf 11.1}, {\bf 10.3}--{\bf 10.2}, {\bf 9.2}, {\bf 7.5}, {\bf 7.2}--{\bf 7.1}, {\bf 6.2}, and {\bf 5.3} ensure that $\mathsf D (G) \ge \mathsf D (G') \ge 10$.

If $|G/G'| \in \{ 7560, 5040, 3780, 3024, 2520, 2160, 1890, 1680, 1260, 945, 840, 630, 560, 315, 280, 270, 252, 240,$ $210, 189, 180, 168, 135, 126 \}$, then we infer that $G/G'$ has an element of order at least 10, and thus it follows by Lemmas~\ref{lem:ineqDC}.3 and \ref{lem:order9} that $\mathsf D (G) \ge \mathsf D (G/G') \ge 10$.

\medskip
\noindent
We now cover all cases, and this completes the proof of Lemma~\ref{lem:DC}.
\end{proof}

\medskip
\begin{proof}[Proof of Theorem~\ref{thm:classify}]
Let $G$ be a non-trivial finite group with $\mathsf D (G) < 10$.
Suppose first that $G$ is non-abelian.
If $\mathsf D (G) \le 7$, then Lemma~\ref{lem:DC} ensures that $|G| \le 42$.
If $8 \le \mathsf D (G) \le 9$, then it follows again by Lemma~\ref{lem:DC} that either $|G| \le 42$, or that $G$ has a proper subgroup of order 32.
Thus, except for groups having a proper subgroup of order 32, the classification of such non-abelian groups follows from Lemma~\ref{lem:dD}.
Now, we may assume that $G$ is abelian, say $G \cong C_{n_1} \times \cdots \times C_{n_r}$ with $1 < n_1 \mid \cdots \mid n_r$.
If $n_r \ge 10$, then we obtain that $\mathsf D (G) \ge \mathsf D (C_{10}) = 10$, and hence we must have that $n_r \le 9$.

If $n_r = 2$, then $G \cong C^{r}_2$, and by Lemma~\ref{lem:abD}.(a), $r+1 = \mathsf D (G) < 10$, whence $G \cong C^{r}_2$ with $r \le 8$.

If $n_r = 3$, then $G \cong C^{r}_3$, and by Lemma~\ref{lem:abD}.(a), $2r + 1 = \mathsf D (G) < 10$, whence $G \cong C^{r}_3$ with $r \le 4$.

If $n_r = 4$, then $G \cong C^{t}_2 \times C^{\ell}_4$ for some $t \in \mathbb N_0$ and $\ell \in \mathbb N$.
Then, by Lemma~\ref{lem:abD}.(a), $t + 3\ell + 1 = \mathsf D (G) < 10$, whence $G \cong C^{t}_2 \times C^{\ell}_4$, where $\ell \in \{ 1, 2 \}$ if $0 \le t \le 2$, and $\ell = 1$ if $3 \le t \le 5$.

If $n_r = 5$, then $G \cong C^{r}_5$, and by Lemma~\ref{lem:abD}.(a), $4r + 1 = \mathsf D (G) < 10$, whence $G \cong C^{r}_5$ with $r \le 2$.

If $n_r = 6$, then either $G \cong C^{t}_2 \times C^{\ell}_6$ or $G \cong C^{i}_3 \times C^{j}_6$ for some $t, i \in \mathbb N_0$ and $\ell, j \in \mathbb N$.
Thus, either $t + 5\ell +1 \le \mathsf D (G) < 10$ or $2i + 5j + 1 \le \mathsf D (G) < 10$, and hence the possible values of either $(t, \ell)$ or $(i,j)$ are as follows:
\begin{equation} \label{eq:exp6}~
  (t, \ell) \,\, \in \,\, \{ (0,1), (1,1), (2,1), (3,1)  \} \quad \und \quad (i,j) \,\, \in \,\, \{ (0,1), (1,1) \} \,.
\end{equation}
Thus, we infer that $\mathsf D (G) = t + 5\ell + 1$ if $G \cong C^{t}_2 \times C^{\ell}_6$ with $(t,\ell)$ from (\ref{eq:exp6}) by Lemma~\ref{lem:abD}.(c), and $\mathsf D (G) = 2i + 5j + 1$ if $G \cong C^{i}_3 \times C^{j}_6$ with $(i,j)$ from (\ref{eq:exp6}) by Lemma~\ref{lem:abD}.(b).
Hence, it follows that either $G \cong C^{t}_2 \times C^{\ell}_6$ or $G \cong C^{i}_3 \times C^{j}_6$, where $(t, \ell)$ and $(i,j)$ from (\ref{eq:exp6}).

If $n_r = 7$, then $G \cong C^{r}_7$, and by Lemma~\ref{lem:abD}.(a), $6r + 1 = \mathsf D (G) < 10$, whence $G \cong C_7$.

If $n_r = 8$, then $G \cong C^{t}_2 \times C^{\ell}_4 \times C^{k}_8$ for some $t, \ell \in \mathbb N_0$ and $k \in \mathbb N$, and by Lemma~\ref{lem:abD}.(a), $t + 3\ell + 7k + 1 = \mathsf D (G) < 10$.
Thus, it follows that either $G \cong C_8$ or $G \cong C_2 \times C_8$.

If $n_r = 9$, then $G \cong C^{t}_3 \times C^{\ell}_9$ for some $t \in \mathbb N_0$ and $\ell \in \mathbb N$, and by Lemma~\ref{lem:abD}.(a), $2t + 8\ell + 1 = \mathsf D (G) < 10$, whence $G \cong C_9$.
This completes our proof of Theorem~\ref{thm:classify}.
\end{proof}


\bigskip
\noindent
{\bf Acknowledgement.} I thank Alfred Geroldinger for his helpful comments on a preliminary version of this paper. I would also like to thank the anonymous referees for their time and effort. They provided a detailed and valuable comments, which greatly helped to improve the presentation of this paper.


\bigskip
\providecommand{\bysame}{\leavevmode\hbox to3em{\hrulefill}\thinspace}
\providecommand{\MR}{\relax\ifhmode\unskip\space\fi MR }
\providecommand{\MRhref}[2]{%
  \href{http://www.ams.org/mathscinet-getitem?mr=#1}{#2}
}
\providecommand{\href}[2]{#2}

\bigskip

\end{document}